\numberwithin{equation}{section}
\def\titlerunning#1{\gdef\titrun{#1}}
\def\author#1{\gdef\autrun{\def\and{\unskip, }#1}\gdef\@author{#1}}
\def\address#1{{\def\and{\\\hspace*{18pt}}\renewcommand{\thefootnote}{}%
\footnote {#1}}%
\markboth{\autrun}{\titrun}}
\def\subjclass#1{{\renewcommand{\thefootnote}{}%
\footnote{\emph{Mathematics Subject Classification (2010):} #1}}}
\newtheoremstyle{mystyle}{}{}{\slshape}{2pt}{\scshape}{.}{ }{} 
\newtheorem{thm}{Theorem}[section]
\newtheorem{cor}[thm]{Corollary}
\newtheorem{prop}[thm]{Proposition}
\newtheorem{lemme}[thm]{Lemma}
\newtheorem{fait}[thm]{Fact}
\theoremstyle{definition}
\newtheorem{defi}[thm]{Definition}
\theoremstyle{mystyle}
\newtheorem{ex}[thm]{Exemple}
\theoremstyle{remark}
\newcommand{\pred}{\mathbf P}
\newcommand{\llg}{\langle}
\newcommand{\rrg}{\rangle}
\newcommand{\impl}{\rightarrow}
\newcommand{\monster}{\mathcal U}
\newcommand{\ignore}[1]{}
\DeclareMathOperator{\tp}{tp}
\DeclareMathOperator{\Mor}{Mor}
\DeclareMathOperator{\op}{op}
\def\indsym#1#2{%
 \setbox0=\hbox{$\m@th#1x$}%
 \kern\wd0%
 \hbox to 0pt{\hss$\m@th#1\mid$\hbox to 0pt{$\m@th#1^{#2}$\hss}\hss}%
 \lower.9\ht0\hbox to 0pt{\hss$\m@th#1\smile$\hss}%
 \kern\wd0}
\def\nindsym#1#2{%
 \setbox0=\hbox{$\m@th#1x$}%
 \kern\wd0%
 \hbox to 0pt{\hss$\m@th#1\not$\kern1.4\wd0\hss}
 \hbox to 0pt{\hss$\m@th#1\mid$\hbox to 0pt{$\m@th#1^{#2}$\hss}\hss}%
 \lower.9\ht0\hbox to 0pt{\hss$\m@th#1\smile$\hss}%
 \kern\wd0}
\begin{document}


\baselineskip=17pt


\titlerunning{Type decomposition in NIP}

\title{Type decomposition in NIP theories}

\author{Pierre Simon}

\date{}

\maketitle

\address{University of California, Berkeley\\Mathematics Department\\Evans Hall\\Berkeley, CA, 94720-3840}

\subjclass{03C45}


\begin{abstract}A first order theory is NIP if all definable families of subsets have finite VC-dimension. We provide a justification for the intuition that NIP structures should be a combination of stable and order-like components. More precisely, we prove that any type in an NIP theory can be decomposed into a stable part (a generically stable partial type) and an order-like quotient.\end{abstract}
\maketitle
\section*{Introduction}

A family $\mathcal S$ of subsets of a set $X$ is said to have finite VC-dimension if there is an integer $N$, such that for any $X_0\subseteq X$ of size $N$, the restriction of $\mathcal S$ to $X_0$ is strictly smaller than the full power set of $X_0$. The name VC-dimension comes from the seminal paper of Vapnik and Chervonenkis \cite{vc} in which they prove that families of finite VC-dimension satisfy a uniform law of large numbers. This notion was introduced independently at about the same time in model theory by Shelah \cite{Sh10} under the name NIP (Negation of the Independence Property). A first order structure $M$ is NIP if all uniformly definable families of subsets of $M$ have finite VC-dimension. Classical NIP structures include algebraically closed fields, abelian groups, real closed fields (and more generally \emph{o-minimal} structures), algebraically closed valued fields and fields $\mathbb Q_p$ of $p$-adic numbers.

A subclass of NIP structures which plays a central role in model theory is that of stable structures, example of which include abelian groups, algebraically closed fields, separably closed fields... Stable structures exhibit properties characteristic of algebraic geometry: one can define dimensions on definable sets (possibly ordinal-valued), there is a canonical notion of independence, called forking-independence and with it comes the notion of a generic point of definable sets.

Another important subclass is that of o-minimal structures: a structure is o-minimal if it is equipped with a definable linear order $<$ such that every definable subset of the line is a finite union of open intervals and points. O-minimality has proved to be a very efficient framework for \emph{tame} real geometry: the condition of o-minimality forbids topological pathologies at the definable level, such as space-filling curves or nowhere differentiable functions.

Algebraically closed valued fields (ACVF) are often presented as the prototypical NIP structures since they exhibit both the phenomena of stability (seen in the residue field) and o-minimality (the value group). In fact, one often seeks to understand NIP structures starting from the stable and o-minimal situations, which are well understood, and looking for common properties (this was suggested by Shelah, see e.g. \cite[4.1]{Sh715}). In \cite{distal}, we set out to give a precise meaning to this intuition with the vague goal of decomposing an NIP structure into a stable part and an order-like part. The first step of this program was to define a class of structures in which the \emph{stable part} is trivial, even without knowing what the \emph{stable part} would be in general. This led to the definition of \emph{distal} structures, which thus correspond to the opposite extreme to stability. Typical distal structures are o-minimal structures and the field $\mathbb Q_p$ of $p$-adic numbers. Distal structures can be thought of as order-like, or purely-unstable. From a more geometric point of view, we can think of distal structures as being related to semi-algebraic geometry the same way stable structures are related to algebraic geometry: they are meant to abstract the typical combinatorial properties of semi-algebraic structures such as $\mathbb R$ or $\mathbb Q_p$.

Distal structures are characterized by the fact that every type $p(x)=\tp(a/A)$ is \emph{compressible}: for any formula $\phi(x;y)$, there is some formula $\zeta(x;t)$ such that for any finite $A_0\subseteq A$, there is $e\in A$ with $\zeta(x;e)\in p$ and $\zeta(x;e)\vdash \tp_\phi(a/A_0)$. In other words, we can uniformly compress every finite part $\tp_\phi(a/A_0)$ of $\tp_\phi(a/A)$ into a formula $\zeta(x;e)$.

Having defined the notion of order-like, the second part of the program involves decomposing an arbitrary NIP structure. This can be tried at various levels. In the paper \cite{distal} we developed some tools to decompose types over indiscernible sequences and over saturated models. We showed in both cases that one could construct some kind of \emph{stable part} over which the type behaved like in a distal theory. In the present paper, we realize our goal by building such a decomposition for types over arbitrary sets of parameters (Theorem \ref{th_main}). The \emph{stable part} that we obtain is what we call a generically stable partial type. The statement is already interesting (and not easier to prove) if we weaken the condition of generic stability to merely asking that the partial type is Ind-definable. Here is a corollary of our main theorem that is easy to state:

\begin{thm}\label{th_mainintro}
Let $T$ be NIP and let $p(x)=\tp(a/A)$ be any type. Given a formula $\phi(x;y)$, there are formulas $\zeta(x;t)\in L$ and $\delta(x;t,y)\in L(A)$ such that:

\textbf{(1)} Definability of the $\delta$-type:  for all $(e,b)\in A^{|t|+|y|}$, $\delta(a;e,b)$ holds.

\textbf{(2)} Relative compressibility: For every finite $A_0\subseteq A$, there is $e\in A$ such that $\zeta(x;e)\in p$ and for all $b\in A_0^{|y|}$, either $\zeta(x;e)\wedge \delta(x;e,b)\vdash \phi(x;b)$ or $\zeta(x;e)\wedge \delta(x;e,b)\vdash \neg \phi(x;b)$.


\textbf{(3)} Uniformity: If we write $\delta(x;y,t)=\delta_0(x,y,t;d)$ with $d\in A$ and $\delta_0\in L$, then $\delta_0$ and $\zeta$ depend only on $\phi$ and neither on $A$ nor $a$.
\end{thm}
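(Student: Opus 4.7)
The plan is to deduce Theorem \ref{th_mainintro} from the main theorem of the paper, Theorem \ref{th_main}, which (as indicated by the abstract and introduction) should produce a generically stable partial type $q(x)\subseteq p(x)$ over $A$ together with a compressibility statement for the quotient $p/q$. In the form useful here, that compressibility says: for each formula $\phi(x;y)$, there exists $\zeta(x;t)\in L$ (depending only on $\phi$) such that for every finite $A_0\subseteq A$ one can find $e\in A$ with $\zeta(x;e)\in p$ and $q(x)\cup\{\zeta(x;e)\}$ deciding $\phi(x;b)$ for every $b\in A_0$. The formula $\zeta$ is then already the one required in Theorem \ref{th_mainintro}, and the first half of the uniformity clause (3) is built-in.

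The real work is to replace the partial type $q$ by a single $L(A)$-formula $\delta(x;t,y)$ parametrized by $(t,y)$. For a given pair $(e,b)\in A^{|t|+|y|}$ on which $q\cup\{\zeta(x;e)\}$ decides $\phi(x;b)$, compactness produces \emph{some} finite conjunction from $q$ witnessing the decision, but \emph{a priori} that conjunction depends on $(e,b)$. To get a single uniform formula I would apply the honest-definition machinery for NIP partial types to $q$ and to the two auxiliary formulas $\psi_+(x;t,y):=\zeta(x;t)\wedge\phi(x;y)$ and $\psi_-(x;t,y):=\zeta(x;t)\wedge\neg\phi(x;y)$. This should yield an $L$-formula $\delta_0(x;t,y;s)$ and parameters $d\in A$ such that $\delta_0(x;t,y;d)\in q$ for every $(t,y)$, while simultaneously $\delta_0(x;e,b;d)\wedge\zeta(x;e)$ is inconsistent with whichever of $\psi_+(x;e,b),\psi_-(x;e,b)$ is refuted by $q\cup\{\zeta(x;e)\}$. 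Setting $\delta(x;t,y):=\delta_0(x;t,y;d)$ does the job.

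Verification of the three clauses is then routine. Condition (1) holds because $\delta(x;e,b)\in q\subseteq p$ for every $(e,b)\in A^{|t|+|y|}$, so $\delta(a;e,b)$ is true. Condition (2) holds because, for the $e\in A$ produced by the compression statement applied to $A_0$, each $b\in A_0$ has $\phi(x;b)$ or $\neg\phi(x;b)$ decided by $q\cup\{\zeta(x;e)\}$, and the honest definition ensures that $\delta(x;e,b)\wedge\zeta(x;e)$ already implies the decided alternative. Condition (3) holds by construction: $\delta_0$ and $\zeta$ depend only on $\phi$ (both coming from the main theorem and from the honest-definition construction applied to an $L$-family), and all parameters live in $A$.

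The main obstacle is the honest-definition step: standard honest-definition results for invariant types produce witnesses with parameters in a larger model, whereas here the parameters must come from $A$ itself, and a \emph{single} tuple $d\in A$ must work uniformly as $(t,y)$ vary and for both signs of $\phi$. This is where the full strength of generic stability of $q$, rather than mere invariance or Ind-definability, is expected to be used: generic stability implies definability of $q$, which combined with the NIP/VC duality should yield the desired $A$-parametrized honest definition. Carrying out this uniform extraction, compatibly with the distality of the quotient $p/q$ delivered by Theorem \ref{th_main}, is the technical heart of the argument.
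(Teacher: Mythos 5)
Your global strategy (deduce the statement from Theorem \ref{th_main}) is the same as the paper's, but the two steps that carry all the content are not actually carried out, and the route you propose for the first of them is both unjustified and unnecessary. For the construction of $\delta$ you defer to an ``$A$-parametrized honest definition'' of the generically stable partial type, explicitly flagged as ``the technical heart'' and left as a hope (``should yield'', ``is expected to be used''); moreover the claim that this is where generic stability beyond Ind-definability enters is off the mark. In the paper no honest-definition machinery is needed to produce $\delta$: a generically stable partial type $\pi$ is by definition Ind-definable over $A$, i.e.\ given by schemes $(\psi_i(x;y,t),d\psi_i(y,t;d))$ with $d\in A$, and $(\pi\otimes q)$ is generated by the implications $d\psi_i(y,e;d)\impl\psi_i(x,y,e)$. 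One takes an $|A|^+$-saturated pair $(A,a)\prec(A',a)$, applies Theorem \ref{th_main} to each global $q$ finitely satisfiable in $A$, and by compactness extracts $\theta_q(y;e)\in q|_{A'}$, $\zeta_q(x;e)\in\tp(a/A')$ and finitely many such scheme-implications with
\[\theta_q(y;e)\wedge\zeta_q(x;e)\wedge\bigwedge_i\bigl(d\psi_i(y,e;d)\impl\psi_i(x,y,e)\bigr)\vdash\phi(x;y)^{\epsilon(q)};\]
compactness of the closed set $S$ of types finitely satisfiable in $A$ gives a finite subcover, and $\delta(x;t,y):=\bigwedge_i\bigl(d\psi_i(y,t;d)\impl\psi_i(x,y,t)\bigr)$ already has its parameter $d$ in $A$, with clause (1) holding because $a\models\pi|_A$; clause (2) with $e\in A$ follows since every $b\in A$ realizes a type in $S$ (so $A\subseteq\theta_0(A';e)\cup\theta_1(A';e)$) and the requirement for a fixed finite $A_0$ is first-order in the pair $(A,a)$. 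Your proposal never explains how a single $d\in A$ works uniformly in $(e,b)$ and for both signs, which is exactly the point; the answer is Ind-definability over $A$, not ``definability of $q$ plus NIP/VC duality''.

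The second gap is clause (3). You assert uniformity ``holds by construction'', but the $\zeta$ and the schemes produced above a priori depend on the type $p$, hence on $A$ and $a$ (Theorem \ref{th_main} gives no formula-level compression independent of $p$). The paper needs a genuinely separate argument: name $A$ by a new predicate $\pred$, use compactness over all expansions $(M,\pred)$ and all choices of $a$ to get finitely many candidate tuples of formulas that work whenever $A_0$ has size $\leq n$, code them into one tuple, and then invoke the $(p,q)$-theorem (Fact \ref{fact_pq}) to pass from subsets of size $\leq n$ to arbitrary finite $A_0$, replacing $\zeta,\theta_\epsilon,\delta$ by $N$-fold conjunctions/disjunctions. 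Without this step, the statement that $\zeta$ and $\delta_0$ depend only on $\phi$ is not established.
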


As a consequence, we obtain a more explicit construction of honest definitions and also prove the existence of non-realized compressible types in any unstable NIP theory.

\smallskip
Our program of decomposing types was strongly influenced by various works of Shelah. The idea that types in NIP can be decomposed into a stable-like part and an order-like one appears in \cite{Sh900} and \cite{Sh950}, where this intuition is explicitely stated and important results supporting it are proved. Most notably in \cite{Sh950}, Shelah proves a decomposition theorem for types over a saturated model and deduces from it that under the NIP assumption, the number of types up to automorphisms is small (and this characterizes NIP). Although our work was inspired by that of Shelah, our approach is quite different and our theorem neither implies nor is implied by those of Shelah. The two decomposition theorems can be seen as complementing each other. Whereas Shelah \cite{Sh950} considers types over saturated models and studies them up to automorphisms, we consider arbitrary types, and describe them up to elementary equivalence. In Shelah's decomposition, the stable-like part is a type finitely satisfiable over a small set with no additional stable-like properties. In fact, assuming distality does not seem to help in simplifying his proof. Our stable-like part is a generically stable partial type, which is a stronger condition. In particular, it is an object invariant over a set of size $|T|$. A fair share of the hard work in \cite{Sh950} has to do with understanding what happens when a type is orthogonal to all types finitely satisfiable over very small sets (of size $\leq \beth_\omega$), but not orthogonal to some type over a small set (of size less than that of the saturated model). This intermediate scale disappears when we take a saturated elementary extension of the type and therefore is not involved in our work. From the point of view of Shelah's decomposition, our analysis can be thought of as looking more closely at what happens at the very small scale.

It is tempting to think that the two results could be combined (for types over saturated models), but we have not found any way of doing so.

\smallskip
The paper is organized as follows: In the first section, we set our notations and recall some properties of indiscernible sequences in NIP theories, in particular the theory of domination from \cite{distal}. Section \ref{sec_genstable} introduces generically stable partial types. Most of it makes no assumption of NIP. In Section \ref{sec_compr}, we define compressible types and prove basic statements about them. Finally Section \ref{sec_main} states and proves the decomposition theorem along with a few corollaries.

\section{Preliminaries}\label{sec_prel}

Throughout this paper, $T$ is a complete first order theory in a language $L$. We let $\monster$ be a monster model, which is $\bar \kappa$-saturated and $\bar \kappa$-strongly homogeneous for some large enough $\bar \kappa$. All sets of parameters considered have size smaller that $\bar \kappa$. If $A\subset \monster$ and $\phi(x)\in L(\monster)$ is a formula, by $\phi(A)$, we mean the set of tuples $a\in A^{|x|}$ satisfying $\phi(x)$.

We use the notation $\phi^0$ to mean $\neg \phi$ and $\phi^1$ to mean $\phi$. If $\phi(x;y)\in L$, $\tp_{\phi}(a/A)$ is the set of instances of $\phi(x;y)$ and $\neg \phi(x;y)$ in $\tp(a/A)$.

By an $A$-invariant type, we mean a global type $p$ which is invariant under automorphisms fixing $A$ pointwise. If $p(x)$ and $q(y)$ are both $A$-invariant, we can define the type $p(x)\otimes q(y)$ whose restriction to any set $B\supseteq A$ is $\tp(a,b/B)$, where $b\models q|B$ and $a\models p|B b$. It is also an $A$-invariant type. A Morley sequence of $p$ over $A$ is a sequence $I=(a_i:i\in \mathcal I)$ such that for each $i\in \mathcal I$, $a_i\models p|Aa_{<i}$. A Morley sequence of $p$ over $A$ is indiscernible over $A$ and all Morley sequences of $p$ over $A$ indexed by the same order have the same type over $A$.

Finally, if $p$ is an $A$-invariant type and $q$ is any type over a base $B\supseteq A$, then we define $p(x)\otimes q(y)\in S_{xy}(B)$ as $\tp(a,b/B)$, where $b\models q$ and $a\models p|Bb$.

\subsection{Indiscernible sequences}
We set here some terminology concerning indiscernible sequences.

If $I$ is an indiscernible sequence, we let $\op(I)$ denote the sequence $I$ indexed in the opposite order. If $I$ is an endless indiscernible sequence and $T$ is NIP, let $\lim(I)$ denote the limit type of $I$: the global $I$-invariant type defined by $\phi(x)\in \lim(I)$ if $\phi(I)$ is cofinal in $I$. Observe that if $I_1$ is a Morley sequence of $\lim(I)$ over $I$, then $I+op(I_1)$ is indiscernible.

A cut $\mathfrak c =(I_0,I_1)$ of $I$ is a pair of subsequences of $I$ such that $I_0$ is an initial segment of $I$ and $I_1$ the complementary final segment, {\it i.e.}, $I=I_0+I_1$. If $J$ is a sequence such that $I_0+J+I_1$ is indiscernible (over $A$), we say that $J$ \emph{fills} the cut $\mathfrak c$ (over $A$). To such a cut, we can associate two limit types: $\lim(I_0)$ and $\lim(\op(I_1))$ (which are defined respectively if $I_0$ and $\op(I_1)$ have no last element). The cut $(I_0,I_1)$ is \emph{Dedekind} if both $I_0$ and $\op(I_1)$ have infinite cofinalities, in particular are not empty.


We now recall the important theorem about shrinking of indiscernibles and introduce a notation related to it (see e.g. \cite[Chapter 2]{NIPBook}).

\begin{defi}
A finite convex equivalence relation on $\mathcal I$ is an equivalence relation $\sim$ on $\mathcal I$ which has finitely many classes, all of which are convex subsets of $\mathcal I$.
\end{defi}

\begin{prop}[Shrinking of indiscernibles]\label{shrinking1}
Let $( a_t)_{t\in \mathcal I}$ be an indiscernible sequence. Let $d$ be any tuple and $\phi(y_0,..,y_{n-1};d)$ a formula. There is a finite convex equivalence relation $\sim_\phi$ on $\mathcal I$ such that given: 

-- $t_0<\ldots<t_{n-1}$ in $\mathcal I$;

-- $s_0<\ldots<s_{n-1}$ in $\mathcal I$ with $t_k \sim_\phi s_k$ for all $k$;

\noindent
we have $\phi(a_{t_0},..,a_{t_{n-1}};d) \leftrightarrow \phi(a_{s_0},...,a_{s_{n-1}};d)$.

Furthermore, there is a coarsest such equivalence relation.
\end{prop}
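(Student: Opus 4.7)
The plan is to proceed by induction on $n$. For the base case $n=1$, NIP gives the result directly: the formula $\phi(y;z)$ has finite alternation rank $\alt(\phi)$, so the sequence of truth values $(\phi(a_t;d))_{t\in\mathcal I}$ alternates at most $\alt(\phi)$ times. Hence $\{t \in \mathcal I : \models \phi(a_t;d)\}$ is a finite union of maximal convex subsets, and I would take $\sim_\phi$ to be the equivalence relation whose classes are these maximal convex subsets together with the convex pieces on which $\phi$ fails. This is visibly the coarsest finite convex equivalence relation with the required property.

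For the inductive step ($n\geq 2$), given $\phi(y_0,\ldots,y_{n-1};d)$, I would combine two ingredients. First, for any fixed auxiliary tuple $\bar b=(a_{u_0},\ldots,\widehat{a_{u_i}},\ldots,a_{u_{n-1}})$ of elements from the sequence with one position removed, the single-variable instance of $\phi$ with $\bar b$ substituted is NIP and so alternates at most $\alt(\phi)$ times along $(a_t)$, producing a uniformly bounded convex partition of each open interval of $\mathcal I$ between the $u_j$'s. Second, applying the inductive hypothesis to the $(n{-}1)$-variable formula $\phi(y_0,\ldots,y_{n-2},w;d)$ (and similarly for each position of insertion) controls how the $1$-variable partition varies as the auxiliary choices move. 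I would then build $\sim_\phi$ as the \emph{join} (transitive closure of union) of all these finite convex equivalence relations on $\mathcal I$. The join of finite convex equivalence relations is again finite convex, and the shrinking property is preserved under join: given increasing $\bar t,\bar s$ with $t_k$ and $s_k$ in the same join-class, one passes from $\bar t$ to $\bar s$ by a sequence of single-coordinate swaps along $\sim_1$- or $\sim_2$-chains, using convexity of each component's classes and a greedy choice of swap order to keep intermediate tuples strictly increasing.

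The main obstacle I expect is the finiteness of the resulting number of classes: the auxiliary tuples $\bar b$ range over \emph{all} increasing $(n{-}1)$-tuples in $\mathcal I$, yet the overall partition of $\mathcal I$ must have finitely many classes, with a bound depending only on $\phi$. I would handle this by strengthening the inductive hypothesis to record an explicit uniform bound $N(\phi)$ tied to $\alt(\phi)$: any apparently new cut in $\mathcal I$ produced by varying $\bar b$ must, by indiscernibility of $(a_t)$, correspond either to an NIP alternation of the single-variable formula (already controlled by the base case) or to a cut already accounted for by the inductive bound applied to a lower-arity formula. Existence of the coarsest $\sim_\phi$ then follows for free: the family of finite convex equivalence relations with the shrinking property is closed under join and has uniformly bounded class count, so it contains a maximum, namely the coarsest.
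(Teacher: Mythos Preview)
First, note that the paper does not actually prove this proposition: it is recalled as a standard fact with a pointer to \cite[Chapter 2]{NIPBook}. So there is no in-paper argument to compare against; what follows is an assessment of your sketch on its own.

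Your base case and the closing remark on the coarsest relation are fine (the family of finite convex equivalence relations with the shrinking property is indeed closed under lattice join, and the join is coarser than any member, hence finite and convex). The inductive step, however, has a real gap hiding behind a join/meet confusion. You take the \emph{join} of the single-variable relations $\sim_{\bar b,i}$ and then worry about finiteness of the number of classes. But the join is \emph{coarser} than each component, so finiteness would be automatic; the finiteness concern you describe is the concern for the \emph{meet} (common refinement). More importantly, under either reading the argument does not close. Each $\sim_{\bar b,i}$ only guarantees invariance of $\phi$ when the $i$-th coordinate moves within a class \emph{and the remaining coordinates are held at that particular} $\bar b$; it does not have the full shrinking property for $\phi$. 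Hence under the join your chain-of-swaps argument fails: the chain witnessing $t_k \sim s_k$ uses relations $\sim_{\bar b',i}$ for various $\bar b'$ that need not agree with the current values of the other coordinates, so no single step is licensed. Under the meet the single-coordinate swaps would be licensed, but then finiteness of the meet is exactly the substance of the theorem, and your proposed fix (``any apparently new cut \ldots must correspond either to an NIP alternation \ldots or to a cut already accounted for by the inductive bound'') is a restatement of what has to be proved, not a proof.

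The standard argument in the cited reference does proceed by induction on $n$, but the core of the inductive step is a direct bound on the number of cuts of $\mathcal I$ at which a single-coordinate substitution can flip $\phi$, obtained by playing indiscernibility of $(a_t)$ against the uniform alternation bound for $\phi$. Your outline correctly locates this as the obstacle but does not supply that bound; the appeal to the $(n{-}1)$-variable inductive hypothesis (``controls how the $1$-variable partition varies'') is where a concrete argument is needed and none is given.
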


Given $I=( a_t)_{t\in \mathcal I}$, $\phi(y_0,\ldots,y_{n-1};d)$ as above, we let $\textsf T(I,\phi)$ denote the number of equivalence classes in the coarsest $\sim_\phi$ given by the proposition. By compactness, the number $\textsf T(I,\phi)$ is bounded by an integer depending only on $\phi(y_0,\ldots,y_{n-1};z)$. (More precisely, fix some countable dense order $\mathcal I$. Then by the proposition and compactness, there is a bound on $\textsf T(I,\phi)$ for sequences indexed by $\mathcal I$. Then any sequence $I$ contains a countable subsequence with same $\textsf T(I,\phi)$, which can then be extended to one indexed by $\mathcal I$. This shows that the bound obtained actually applies to all sequences.)

If $I\subseteq J$ are indiscernible sequences and $A$ is any set of parameters, we write $I \unlhd_A J$ if for every $\phi(y_0,\ldots,y_{n-1};d)\in L(A)$, we have $\textsf T(I,\phi)=\textsf T(J,\phi)$. Intuitively, formulas with parameters in $A$ do not alternate more on $J$ than they do on $I$.

Note the following special cases:
\begin{itemize}
\item If $I$ is indiscernible over $A$, then $I\unlhd_A J$ simply means that $J$ is $A$-indiscernible and contains $I$.

\item If $I$ is without endpoints, $I \unlhd_A I_0+I+I_1$ is equivalent to the statement that $I_0$ is a Morley sequence in $\lim(\op(I))$ over $IA$ and $\op(I_1)$ is a Morley sequence in $\lim(I)$ over $AII_0$.

\item If $I$ is a Morley sequence of an $A$-invariant type $q$ over $A$ and $\bar b$ is a Morley sequence of $q$ over $AI$, then $I \unlhd_{A\bar b} J$ holds if and only if $J$ is a Morley sequence of $q$ over $A$ containing $I$ and $\bar b$ is a Morley sequence of $q$ over $AJ$. (This is merely a special case of the first point.)

\item Assume that $\mathfrak c$ and $\mathfrak d$ are two distinct cuts in an $A$-indiscernible sequence $I$. Let $\bar a_*$ and $\bar d$ fill $\mathfrak c$ and $\mathfrak d$ respectively, over $A$. Let $J$ be the sequence $I$ with $\bar d$ added in $\mathfrak d$. Then $I\unlhd_{A\bar a_*} J$ means that when we add both $\bar a_*$ and $\bar d$ to $I$ in their respective cuts, the resulting sequence is indiscernible.
\end{itemize}

Notice also that if $I=(a_i:i \in \mathcal I)$ is indiscernible, where the indexing order $\mathcal I$ is dense, then given any $\mathcal I \subseteq \mathcal J$, we can find $J=(a_i:i\in \mathcal J)$ extending $I$ such that $I\unlhd_A J$. This can be seen by a simple compactness argument. We can also build $J$ more explicitly as follows: let $M$ be a model containing $AI$. Consider the pair $(M,I)$ where $I$ is named by a predicate. Take a sufficiently saturated elementary extension $(M,I)\prec (M',I')$. Then $I'$ is $A$-indiscernible and $I\unlhd_A I'$. By saturation, one can find a subsequence $J$ of $I'$ which has the required order type.

\subsection{Domination in indiscernible sequences}

In the course of the proof of the decomposition theorem, we will need the theory of domination in indiscernible sequences presented in \cite{distal}. We recall it here.

\begin{defi}[Domination]\label{def_stablebase}
Let $q$ be an $A$-invariant type and let $I$ be a dense indiscernible Morley sequence of $q$ over $A$. Let $\bar b$ be a Morley sequence of $q$ over $AI$ and $\mathfrak c$ a Dedekind cut of $I$ filled by a dense sequence $\bar a_* = \llg a_t:t\in \mathcal I\rrg$. We say that $\bar a_*$ dominates $\bar b$ over $(I,A)$ if: For every Dedekind cut $\mathfrak d$ of $I$ distinct from $\mathfrak c$, and $\bar d$ a dense sequence filling $\mathfrak d$, we have, where $J$ is the sequence $I$ with $\bar d$ added in the cut $\mathfrak d$: \[I \unlhd_{A \bar a_*} J \Longrightarrow I \unlhd_{A\bar b} J.\]

We say that $\bar a_*$ \emph{strongly dominates} $\bar b$ over $(I,A)$ if for every dense extension $I'\supseteq I$ such that both $I\unlhd_{A\bar a_*} I'$  and $I\unlhd_{A\bar b} I'$ hold, and $\bar a_*$ fills a Dedekind cut of $I'$, then $\bar a_*$ dominates $\bar b$ over $(I',A)$. 
\end{defi}

Existence of strongly dominating sequences was proved in \cite[Proposition 3.6]{distal}. We give here a statement phrased slightly differently to fit our needs.

\begin{prop}\label{prop_existssbase}
Let $q$ be $A$-invariant and let $I_0+\bar a+I_1$ be a dense Morley sequence of $q$ over $A$. Let $\bar b$ be a Morley sequence of $q$ over $AI_0I_1$. Assume that $I_0$ and $I_1$ have no endpoints. Then there is some $I_0+\bar a+I_1 \unlhd_{A} J_0 + \bar a_0 +\bar a +\bar a_1 +J_1$, $\bar a_0$ contains $I_0$ and $\bar a_1$ contains $I_1$, such that $\bar b$ is a Morley sequence of $q$ over $AJ_0J_1$ and $\bar a_* := \bar a_0+\bar a +\bar a_1$ strongly dominates $\bar b$ over $(J_0+J_1,A)$.
\end{prop}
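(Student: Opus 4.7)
The plan is to deduce this directly from Proposition~3.6 of \cite{distal}, which supplies the existence of strongly dominating configurations in the abstract; the content here is to repackage that conclusion so that the given data $I_0, \bar a, I_1, \bar b$ appear explicitly inside it.

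First I would invoke \cite[Prop.~3.6]{distal} with $q$ and $A$ to obtain a dense Morley sequence $K_0 + \bar c_* + K_1$ of $q$ over $A$ (with $K_0, K_1$ endless and $\bar c_*$ filling a Dedekind cut) together with a Morley sequence $\bar b'$ of $q$ over $A(K_0 \cup K_1)$ such that $\bar c_*$ strongly dominates $\bar b'$ over $(K_0 + K_1, A)$. By choosing the index orders generously one may further arrange that $K_0, \bar c_*, K_1$ contain convex subsequences of the same index types as $I_0, \bar a, I_1$ respectively, placed so that the marked pieces of $K_0$ and $K_1$ are cofinal at the top of $K_0$ and at the bottom of $K_1$.

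Second, I would transport this abstract configuration to the concrete data. By uniqueness of Morley sequences of the $A$-invariant type $q$ indexed by a fixed order, the joint type over $A$ of the marked subconfiguration of $K_0 + \bar c_* + K_1$ together with $\bar b'$ agrees with the joint type of $I_0 + \bar a + I_1$ together with $\bar b$ over $A$, and an $A$-automorphism of $\monster$ then identifies them. Setting $J_0, J_1$ to be the images of the residual parts of $K_0, K_1$, and $\bar a_0, \bar a_1$ to be the images of the portions of $\bar c_*$ below (resp.\ above) $\bar a$ together with $I_0$ (resp.\ $I_1$), yields the required configuration. Strong domination, Morley-ness of $\bar b$ over $AJ_0 J_1$, and the $\unlhd_A$-extension all descend from the abstract construction — the last reducing, via the first bullet following Proposition~\ref{shrinking1}, to $A$-indiscernibility of the extended sequence, which holds automatically because it is a Morley sequence of $q$ over $A$.

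The main obstacle lies in justifying the type-equality in the identification step. Since $\bar b$ is only assumed Morley of $q$ over $AI_0 I_1$ (and not over $AI_0 \bar a I_1$), matching it with $\bar b'$ requires that the conclusion of \cite[Prop.~3.6]{distal} indeed constrains $\bar b'$ only over $A(K_0 \cup K_1)$, and not over the larger set $AK_0 \bar c_* K_1$, so that no spurious constraint across $\bar a$ arises. Provided this is the case, both joint types over $A$ are determined purely by $q$ together with the index orders of the marked subsequences and of $\bar b$, and they therefore coincide; the automorphism of the previous paragraph exists and preserves strong domination by $A$-invariance of that notion.
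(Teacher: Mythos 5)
There is a genuine gap, and it sits exactly at the point you flag as the ``main obstacle'': the type-equality needed for the identification step is false in general. The joint type of $(I_0+\bar a+I_1,\bar b)$ over $A$ is \emph{not} determined by $q$ together with the index orders. The hypothesis only pins down $\tp(I_0+\bar a+I_1/A)$ (a Morley sequence of $q$) and $\tp(\bar b/AI_0I_1)$ (Morley of $q$ over $AI_0I_1$); it says nothing about $\tp(\bar b/AI_0\bar a I_1)$, i.e.\ about how $\bar b$ interacts with $\bar a$. Two choices of $\bar b$ satisfying the hypothesis can have genuinely different joint types with $\bar a$ over $A$ (e.g.\ $\bar b$ could be Morley over $AI_0\bar a I_1$, or heavily entangled with $\bar a$), and in the intended application this entanglement is the whole point: in the proof of Theorem \ref{th_main} one has $\tp(\bar b_*/Aa)=\tp(\bar a_*/Aa)$, so $\bar b_*$ is tied to $\bar a_*$ through $a$. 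Consequently no $A$-automorphism can in general carry the abstract pair (marked subconfiguration of $K_0+\bar c_*+K_1$, $\bar b'$) produced by a black-box use of \cite[Prop.~3.6]{distal} onto the given pair, and the strong domination you obtain for $\bar b'$ does not transfer to the actual $\bar b$. The caveat ``provided this is the case'' cannot be discharged: even if the cited proposition only constrains $\bar b'$ over $A(K_0\cup K_1)$, the conclusion you need (strong domination of the \emph{given} $\bar b$, whose relation to $\bar a$ is arbitrary) is not a statement about a type over $A(K_0\cup K_1)$ alone.

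This is why the paper does not deduce the proposition formally from \cite[Prop.~3.6]{distal} but reruns its argument on the concrete data: assume $\bar a$ does not already strongly dominate $\bar b$ over $(I_0+I_1,A)$, extract a tuple $\bar d$ filling a cut which witnesses extra alternation of some $\phi\in L(A\bar b)$, absorb $\bar d$ together with the ambient sequence into a larger middle part, re-embed that into a longer sequence with $\unlhd_{A\bar b}$ (so that $\bar b$ stays Morley over the new outer pieces), and iterate; the process halts in fewer than $(|A|+|T|)^+$ steps because each successor stage strictly increases some alternation number $\textsf T(\cdot,\phi)$ with $\phi\in L(A\bar b)$, and these are bounded. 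Any repair of your approach would have to run such an iteration relative to the actual $\bar b$; the purely ``abstract existence plus automorphism'' route cannot work.
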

\begin{proof}
The proof is essentially the same as that of \cite[Proposition 3.6]{distal}.

Let $I_0 + \bar a + I_1$ be a dense Morley sequence of $q$ over $A$ and $\bar b$ a Morley sequence of $q$ over $AI_0I_1$. Assume that $\bar a$ does not strongly dominate $\bar b$ over $(I_0+I_1, A)$. Then there is some Morley sequence $I'$ of $q$ over $A$ containing $I_0+I_1$ such that $I_0+I_1\unlhd_{A\bar a} I'$ and $\bar b\models q|AI'$, some tuple $\bar d$ filling a Dedekind cut of $I'$ over $A$ such that $I' \unlhd_{A\bar a} I'\cup \bar d$ (where $\bar d$ is placed in its cut), but $I' \ntrianglelefteq_{A\bar b} I'\cup \bar d$. In this case, this just means that $I'\cup \bar d$ is not indiscernible over $A\bar b$. Hence there is some formula $\phi \in L(A\bar b)$ such that $\textsf T(I'\cup \bar d,\phi)>1$ and one of the classes of the corresponding convex equivalence relation $\sim_\phi$ lies entirely in the cut determined by $\bar d$ (and in fact, we must have $\textsf T(I'\cup \bar d,\phi)\geq 3$). Since $\bar d$ is placed in a cut distinct from that of $\bar a$, we have $I'\cup \bar a\ntrianglelefteq_{A\bar b} I'\cup \bar a \cup \bar d$, witness by the same new class of $\sim_\phi$.

Let $\bar a_1$ be equal to $I' \cup \bar a \cup \bar d$ and build $ \bar a_1 \unlhd_{A\bar b} I_0^1+\bar a_1+I_1^1$. Then $\bar b$ is a Morley sequence of $q$ over $AI_0^1I_1^1$.

Now iterate this construction building an increasing continuous sequence $(\bar a_i :i <\kappa)$. At every successor stage, for some formula $\phi\in L(A\bar b)$ the number $\textsf T(\bar a_i,\phi)$ increases. Since those numbers must remain finite, this process stops after less than $(|A|+|T|)^+$ stages. At the end, we have what we were looking for.
\end{proof}

The condition $\bar a_*$ strongly dominates $\bar b$ over $(I,A)$ is defined looking only at extensions of $I$. It turns out that it implies a much stronger domination which allows for arbitrary parameters. The following is a reformulation of \cite[Proposition 3.7]{distal} ($J_1$ and $J_3$ there are taken to be empty, $J_2$ there is $J_0+J_1$ here and $J_4$ there is $J_2$ here). It is stated in \cite{distal} in the case where $\bar b$ is a unique realization of $q$, but the proof goes through unchanged if $\bar b$ is a Morley sequence of $q$. We also state the hypothesis slightly differently: note that our hypothesis imply that $J_0+\bar a_*+J_1$ is indiscernible over $AdJ_2$ (since it is indiscernible over $Ad$ and $\tp(J_2/Ad+J_0+\bar a_*+J_1)$ is invariant over $Ad$). Thus the hypothesis in \cite{distal} are implied by those here.

\begin{fait}\label{prop_extstablebase}
Let $I$ be a dense Morley sequence of $q$ over $A$, $\bar b$ a Morley sequence of $q$ over $AI$. Fix some Dedekind cut $\mathfrak c$ of $I$ and $\bar a_*$ which fills $\mathfrak c$ over $A$. Assume that $\bar a_*$ strongly dominates $\bar b$ over $(I,A)$, then for any $d\in \monster$ if
\begin{itemize}
\item there is a partition $I=J_0+J_1+J_2$, all sequences infinite without endpoints, such that $J_0+\bar a_*+J_1$ is indiscernible over $Ad$ and $J_2$ is a Morley sequence of $q$ over $Ad + J_0+\bar a_*+J_1$,
\end{itemize}
then $\bar b$ is a Morley sequence of $q$ over $AId$.
\end{fait}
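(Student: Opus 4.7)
Plan: The approach is to apply strong domination to a carefully constructed extension $I'$ of $I$ and to a ``witness sequence'' for $d$ produced from the partition data, thereby transferring the compatibility of $d$ with $\bar a_*$ over to compatibility of $d$ with $\bar b$.

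As preliminary bookkeeping, I would first note that the two halves of the hypothesis combine to make $J_0 + \bar a_* + J_1 + J_2$ indiscernible over $Ad$, and further --- using the parenthetical observation in the statement --- that $J_0 + \bar a_* + J_1$ is in fact indiscernible over $AdJ_2$. The target conclusion unwinds to the identity $\tp(b_i / AId\bar b_{<i}) = q|AId\bar b_{<i}$ for each $i$, so we need to control $\tp(d/AI\bar b)$, or equivalently transport information from $\bar a_*$ to $\bar b$ in the presence of $d$.

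Next I would invoke strong domination. Build a dense extension $I' \supseteq I$ by adjoining long Morley continuations of $q$ on both flanks of $I$, keeping $\bar a_*$ in a Dedekind cut of $I'$ and placing $\bar b$ inside a Dedekind cut of $I'$ to the right of $I$. The Morley-ness of $\bar b$ over $AI$ together with the fact that $\bar a_*$ fills its cut over $A$ give both $I \unlhd_{A\bar a_*} I'$ and $I \unlhd_{A\bar b} I'$, so strong domination yields that $\bar a_*$ dominates $\bar b$ over $(I',A)$. Using the partition hypothesis, I then construct a dense Morley sequence $\bar d^{\#}$ of $q$ over $Ad$ (obtained by extending $J_2$ as a Morley continuation of $q$ over $Ad + J_0+\bar a_*+J_1$ on both sides so as to be cofinal and co-initial in a prescribed cut) that fills a Dedekind cut of $I'$ distinct from the cut of $\bar a_*$. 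The indiscernibility of $J_0+\bar a_*+J_1$ over $AdJ_2$ is precisely what forces $I' \unlhd_{A\bar a_*} (I' + \bar d^{\#})$, and applying the obtained domination gives $I' \unlhd_{A\bar b} (I' + \bar d^{\#})$.

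It remains to decode: the $A\bar b$-indiscernibility of $I'+\bar d^{\#}$, together with the role of $\bar d^{\#}$ as a Morley continuation of $q$ that encodes $d$, forces $\tp(b_i / AId\bar b_{<i}) = q|AId\bar b_{<i}$ for each $i$, which is exactly Morley-ness of $\bar b$ over $AId$. The main obstacle is the third step: arranging $\bar d^{\#}$ so that on one hand $I' \unlhd_{A\bar a_*} (I' + \bar d^{\#})$ is delivered by the partition hypothesis, and on the other the resulting $A\bar b$-indiscernibility translates into Morley-ness over $AId$ rather than over some other $Ad$-invariant extension of $q|A$. This is the technical core of Proposition 3.7 of \cite{distal}; as the authors remark, replacing the single realization of $q$ there by a Morley sequence $\bar b$ requires no substantive change in the argument.
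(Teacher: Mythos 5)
Your treatment matches the paper's: the paper does not reprove this Fact but imports it as a reformulation of \cite[Proposition 3.7]{distal}, making exactly your two observations --- that the stated hypotheses force $J_0+\bar a_*+J_1$ to be indiscernible over $AdJ_2$, so the hypotheses of the cited proposition are implied, and that its proof goes through unchanged when the single realization of $q$ is replaced by the Morley sequence $\bar b$. Since you likewise defer the technical core (the construction and transfer step you flag as the main obstacle) to that same proposition, your proposal is correct and takes essentially the same approach as the paper.
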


\section{Generically stable partial types}\label{sec_genstable}

A partial type $\pi(x)$ is a consistent set of formulas closed under finite conjunctions and logical consequences. We always think of $\pi$ as being over $\monster$. Given a set $A$ of parameters, $\pi|_A$ or $\pi | A$ denotes the subset of $\pi$ composed of formulas with parameters in $A$. Note that $a\models \pi|_A$ if and only if there is a global extension of $\tp(a/A)$ which satisfies $\pi(x)$.

A partial type $\pi$ is $A$-invariant if it is invariant under automorphisms of $\monster$ fixing $A$ pointwise.

\subsection{Ind-definable types}

\begin{defi}
We say that a partial type $\pi$ over $\monster$ is Ind-definable over $A$ if for every $\phi(x;y)$, the set $\{b : \phi(x;b)\in \pi\}$ is Ind-definable over $A$ ({\it i.e.}, is a union of $A$-definable sets).
\end{defi}

One can represent an $A$-Ind-definable partial type as a collection of pairs $$(\phi_i(x;y),d\phi_i(y)),$$ where $\phi_i(x;y)\in L$, $d\phi_i(y)\in L(A)$ such that $\pi(x)$ is equal to $\bigcup_i \{\phi_i(x;b) : b\in d\phi_i(\monster)\}$. The same formula $\phi(x;y)$ can appear infinitely often as $\phi_i(x;y)$.

Conversely, given a family of pairs $(\phi_i(x;y),d\phi_i(y))$, if the partial type $\pi(x)$ generated by $\bigcup_i \{\phi_i(x;b) : b\in d\phi_i(\monster)\}$ is consistent, then it is Ind-definable. Indeed if say $\psi(x;b)\in \pi(x)$, then there are $i_1,\ldots,i_n$ and $b_1,\ldots, b_n\in \monster$ such that $d\phi_{i_k}(b_k)$ holds for all $k$ and $\monster \models (\forall x)(\bigwedge \phi_{i_k}(x;b_k) \rightarrow \psi(x;b))$. Consider the formula $d\psi(y) := (\exists z_1,\ldots,z_k) \bigwedge d\phi_{i_k}(z_k) \wedge (\forall x)(\bigwedge \phi_{i_k}(x;z_k) \rightarrow \psi(x;y))$. Then $d\psi$ is over $A$ and for all $b'\in d\psi(\monster)$, we have $\psi(x;b')\in \pi$.

We say that $\pi$ is \emph{finitely definable} if there is a finite set of pairs $(\phi_i(x;y_i),d\phi_i(y_i))$ which generate $\pi(x)$ as above. We will use the notation $(\phi(x;y),d\phi(y))$ to denote the finitely definable partial type generated by $\{\phi(x;b) : b\in d\phi(\monster)\}$. Observe that the partial types $(\phi(x;y),d\phi(y))$ and $(d\phi(y)\impl \phi(x;y) ; y=y)$ are the same.

\begin{lemme}\label{lem_def}
Let $\pi(x)$ be a partial $A$-invariant type. Then $\pi$ is Ind-definable over $A$ if and only if the set $X=\{(a,\bar b) : \bar b\in \monster^{\omega}, a\models \pi|A\bar b\}$ is type-definable over $A$.
\end{lemme}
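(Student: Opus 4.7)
The plan is to unpack both directions directly from the definitions, using the definable-generator presentation of Ind-definable types recalled just before the lemma for $(\Rightarrow)$, and a single compactness argument for $(\Leftarrow)$.

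For $(\Rightarrow)$, I would first invoke the earlier observation that, for every $L$-formula $\psi(x;y)$, the set $\{b : \psi(x;b)\in\pi\}$ is a union of sets of the form $d\psi(\monster)$ with $d\psi\in L(A)$. Consequently I may enlarge the given family of generators to a family $(\phi_i(x;y_i),d\phi_i(y_i))_{i\in I}$ in which every instance $\psi(x;b)\in\pi$ is itself a named generator. With such a maximal presentation, $a\models\pi|A\bar b$ is equivalent to the conjunction of the implications $d\phi_i(\bar e)\to\phi_i(a;\bar e)$, as $i$ ranges over $I$ and $\bar e$ over all tuples of the appropriate arity drawn from $A\cup\{\bar b_k:k<\omega\}$. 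Isolating the coordinates of $\bar e$ coming from $A$ (absorbed as parameters) from those coming from $\bar b$, each implication becomes an $L(A)$-formula in $a$ and finitely many coordinates of $\bar y$, so the whole conjunction is an $A$-partial type in $(x,\bar y)$ defining $X$.

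For $(\Leftarrow)$, write $X=\Sigma(\monster)$ with $\Sigma(x,\bar y)$ a partial type over $A$. Fix a formula $\phi(x;y)$, set $n=|y|$, and let $U_\phi:=\{b:\phi(x;b)\in\pi\}$; I must show $U_\phi$ is a union of $A$-definable sets. The key equivalence is
\[
b\in U_\phi \ \iff\ \Sigma(x,\bar y)\cup\{\bar y_j=b_j\}_{j<n}\cup\{\neg\phi(x;b)\} \text{ is inconsistent.}
\]
The forward direction is immediate: if $\phi(x;b)\in\pi$ and $(a,\bar b)\in X$ satisfies $\bar b_j=b_j$ for $j<n$, then $\phi(x;b)\in\pi|A\bar b$ forces $\phi(a;b)$. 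For the converse, any $a^*\models\pi$ satisfies $(a^*,\bar b)\in X$ for every $\bar b$, so choosing $\bar b$ with $b$ as its initial segment and the remaining coordinates padded from $A$, the hypothesized inconsistency gives $\phi(a^*;b)$; as this holds for every realization of $\pi$, $\phi(x;b)\in\pi$ by closure under logical consequence. A compactness application to the displayed inconsistency then yields a finite conjunct $\sigma(x,\bar y)\in L(A)$ of $\Sigma$ such that $\forall x\,\forall\bar y'\,(\sigma(x,b,\bar y')\to\phi(x;b))$ holds, where $\bar y'$ denotes the coordinates of $\bar y$ beyond the first $n$. Viewed as a formula in the free variable $b$, this cuts out an $A$-definable subset of $U_\phi$ containing the given point.

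There is no genuine obstacle here; the only step requiring care is the variable bookkeeping in both directions, keeping track of which coordinates of $\bar e$ or $\bar y$ come from $A$ and which from $\bar b$, so that after substitution one lands in $L(A)$ rather than in $L(A\bar b)$.
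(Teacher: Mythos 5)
Your proof is correct and takes essentially the same route as the paper: for left-to-right you type-define $X$ by the implications $d\phi_i(\bar e)\to\phi_i(x;\bar e)$ coming from a maximal family of generator pairs (exactly the paper's definition of the type), and for right-to-left your inconsistency-plus-compactness argument is the paper's closed-projection argument in different words. One small repair in the converse of your key equivalence: since the global partial type $\pi$ need not be realized in $\monster$, ``any $a^*\models\pi$'' should be replaced by an element $a^*\in\monster$ realizing $\pi|A\bar b$ with $\bar b$ the padding of $b$ by elements of $A$ (such $a^*$ exist by saturation and consistency of $\pi$); the hypothesized inconsistency then forces $\phi(a^*;b)$ for every such $a^*$, so by compactness $\pi|Ab\vdash\phi(x;b)$ and hence $\phi(x;b)\in\pi$ by closure under logical consequence.
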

\begin{proof}
If $\pi$ is Ind-definable, then the set $X$ is type-defined by the conjunction of $d\phi(\bar y)\impl \phi(x,\bar y)$ where $(\phi,d\phi)$ ranges over all pairs of formulas in $L(A)$ such that $\phi(x,\bar b)\in \pi(x)$ for all $\bar b\models d\phi(\bar y)$.

Conversely, assume that $X$ is type-definable over $A$ and take some $\phi(x,\bar y)\in L(A)$. The set $X \cap \neg \phi(x,\bar y)$ is closed and so is its projection $Y$ to the variables $\bar y$. If $\bar b\notin Y$, then there is no $a\models \pi|A\bar b$ such that $\neg \phi(a,\bar b)$ holds. In other words, $\phi(x,\bar b)\in \pi$. And conversely, if $\phi(x,\bar b)\in \pi$, then $\bar b\notin Y$. Hence the set $\{\bar b : \phi(x,\bar b)\in \pi\}$ is open over $A$ as required.
\end{proof}

Let $\pi(x)$ and $\eta(y)$ be two $A$-invariant partial types, where $\pi$ is Ind-definable over $A$. Then there is an $A$-invariant partial type $(\pi \otimes \eta)(x,y)$ such that $(a,b)\models \pi \otimes \eta$ if and only if $b\models \eta$ and $a\models \pi |\monster b$. Indeed $(\pi \otimes \eta)(x,y)$ is generated by $\eta(y)$ along with pairs $(d\phi(y,z)\impl \phi(x;y,z), z=z)$, where the partial type $(\phi(x;y,z),d\phi(y,z))$ is in $\pi(x)$. If in addition $\eta$ is Ind-definable over $A$, then so is $\pi \otimes \eta$. As usual, we define inductively $\pi^{(n)}(x_1,\ldots,x_n)$ to be $\pi(x_n)\otimes \pi^{(n-1)}(x_1,\ldots,x_{n-1})$. All those types are Ind-definable over $A$.

\smallskip

Instead of a partial type $\pi$, one could also consider the dual ideal $I_\pi$ of $\pi$ defined as the ideal of formulas $\phi(x)$ such that $\neg \phi(x)\in \pi$. Then an $I_\pi$-wide type (namely a type not containing a formula in $I_\pi$) is precisely a type over some $A$ containing $\pi|A$. This is more consistent with the usage in model theory, but we find that it is easier to think of the partial type rather than the ideal due to the similarity between partial generically stable types to be defined soon and complete generically stable types. The reader might nonetheless find this point of view useful.\footnote{Thanks to Udi Hrushovski for pointing this out to me.}

\subsection{Generic stability}

\begin{defi}
We say that a partial type $\pi(x)$ over $\monster$ is finitely satisfiable in $A$ if any formula in it has a realization in $A$ (recall that we assume $\pi$ to be closed under conjunctions). 
\end{defi}

\begin{lemme}\label{lem_deffs}
Let $\pi$ be a partial type Ind-definable over $A$. Let $a\models \pi|A$ and $b$ such that $\tp(b/Aa)$ is finitely satisfiable in $A$. Then $a\models \pi|Ab$.
\end{lemme}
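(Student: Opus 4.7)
The plan is to use the Ind-definability to pass any potential witness of failure from $b$ back into $A$ via finite satisfiability, and then derive a contradiction from the assumption that $a \models \pi|A$.

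More concretely, I would take an arbitrary formula $\phi(x;b) \in \pi$ with the displayed parameter $b$, and aim to show $\models \phi(a;b)$. Since $\pi$ is Ind-definable over $A$, the set $\{b' : \phi(x;b') \in \pi\}$ is a union of $A$-definable sets, so there is a formula $d\phi(y) \in L(A)$ with $\models d\phi(b)$ and such that $\phi(x;b') \in \pi$ for every $b' \models d\phi(y)$.

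Now suppose for contradiction that $\neg \phi(a;b)$ holds. Then the formula $d\phi(y) \wedge \neg \phi(a;y)$ is in $\tp(b/Aa)$, hence consistent. By finite satisfiability of $\tp(b/Aa)$ in $A$, there is some $b' \in A$ with $\models d\phi(b') \wedge \neg \phi(a;b')$. By the choice of $d\phi$, we have $\phi(x;b') \in \pi$; but $\phi(x;b')$ is a formula over $A$, so it lies in $\pi|A$, contradicting the hypothesis $a \models \pi|A$.

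There is no real obstacle here — the statement is essentially a tautological consequence of the definition of Ind-definability (or equivalently, of the type-definability of the set $X$ in Lemma \ref{lem_def}); the only thing to be careful about is to extract a single $A$-definable approximation $d\phi$ of the defining set $\{b' : \phi(x;b') \in \pi\}$ before invoking finite satisfiability, since finite satisfiability applies to formulas and not to arbitrary Ind-definable sets.
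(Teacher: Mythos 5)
Your proof is correct and is essentially the paper's own argument: assume $\neg\phi(a;b)$, use Ind-definability to find an $A$-formula $d\phi(y)$ satisfied by $b$ whose realizations $b'$ all give $\phi(x;b')\in\pi$, then use finite satisfiability of $\tp(b/Aa)$ in $A$ to move the witness into $A$ and contradict $a\models\pi|A$. The cautionary remark about extracting a single $A$-definable approximation before applying finite satisfiability matches exactly what the paper does.
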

\begin{proof}
Assume not, then there is $\phi(x;y)\in L(A)$ such that $\phi(x;b)\in \pi$, but $a\models \neg \phi(x;b)$. By Ind-definability of $\pi$, there is some $\theta(y)\in L(A)$ such that $\phi(x;b')\in \pi$ for all $b'\models \theta(y)$. As $\tp(b/Aa)$ is finitely satisfiable in $A$, there is $b_0\in A$ such that $b_0 \models \neg \phi(a;y) \wedge \theta(y)$. But this contradicts the fact that $a\models \pi|A$.
\end{proof}

\begin{defi}
Let $\pi(x)$ be a partial type. We say that $\pi$ is generically stable over $A$ if $\pi$ is Ind-definable over $A$ and the following holds:

(GS) if $(a_k:k<\omega)$ is such that $a_k \models \pi|Aa_{<k}$ and $\phi(x;b)\in \pi$, then for all but finitely many values of $k$, we have $\models \phi(a_k;b)$.
\end{defi}

This definition generalizes the one for complete types; see \cite[Section 2.2.2]{NIPBook}.

\begin{prop}
Let $\pi$ be a partial type generically stable over $A$. Then:

(FS) $\pi$ is finitely satisfiable in every model containing $A$;

(NF) let $\phi(x;b)\in \pi$ and take $a\models \pi|A$ such that $\models \neg \phi(a;b)$. Then both $\tp(b/Aa)$ and $\tp(a/Ab)$ fork over $A$.
\end{prop}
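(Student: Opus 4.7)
The plan treats (FS) and (NF) separately: Ind-definability together with Lemma~\ref{lem_deffs} handles (FS), while a Morley sequence combined with (GS) handles (NF).

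For (FS), fix a model $M\supseteq A$ and $\phi(x;b)\in\pi$. Extend $\tp(b/M)$ to a global coheir $q(y)$ finitely satisfiable in $M$, and extend $\pi|M$ to a complete global type $p^*\in S(\monster)$ finitely satisfiable in $M$. Let $a\models p^*$ and $b'\models q|Ma$ inside $\monster$; then $b'\equiv_M b$, so $\models d\phi(b')$, giving $\phi(x;b')\in\pi$, and $\tp(b'/Ma)$ is finitely satisfiable in $M$. By Lemma~\ref{lem_deffs} applied with base $M$, $a\models\pi|Mb'$, so $\models\phi(a;b')$. Since $\tp(a/Mb')=p^*|Mb'$ is fs in $M$, the formula $\phi(x;b')$ has a realization $a^*\in M$; any $M$-automorphism $\sigma$ with $\sigma(b')=b$ (which exists by homogeneity) fixes $a^*\in M$ and yields $\models\phi(a^*;b)$.

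For (NF), set $\psi(x;y):=\neg\phi(x;y)\wedge d\phi(y)\in L(A)$. For the forking of $\tp(b/Aa)$: since $\neg\phi(a;b)$ and $\models d\phi(b)$, we have $\psi(a;y)\in\tp(b/Aa)$. Let $(a_k)_{k<\omega}$ be an $A$-indiscernible Morley sequence of $\pi$ over $A$ with $a_0=a$ (extracted via Ramsey from a standard Morley sequence). If some $b^*$ realized $\{\psi(a_k;y):k<\omega\}$, then $\models d\phi(b^*)$ would give $\phi(x;b^*)\in\pi$, whence (GS) would force $\models\phi(a_k;b^*)$ for cofinitely many $k$, contradicting $\neg\phi(a_k;b^*)$ for every $k$. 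So $\psi(a;y)$ divides over $A$, and $\tp(b/Aa)$ forks over $A$.

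For the forking of $\tp(a/Ab)$: after first reducing (via Lemma~\ref{lem_deffs} and a coheir base change) to the case where $A$ contains a small model, extend $\tp(b/A)$ to a global type $r(y)$ finitely satisfiable in $A$, and let $(b_k)_{k<\omega}$ be its Morley sequence over $A$ with $b_0=b$. Then $(b_k)$ is $A$-indiscernible, every $b_k\equiv_A b$ (so $\phi(x;b_k)\in\pi$), and $\tp(\bar b/Aa^*)$ is fs in $A$ for every $a^*$. The partial type $(\pi|A)\cup\{\neg\phi(x;b_k):k<\omega\}$ is inconsistent: any realization $a^*$ satisfies $\pi|A$ and, by Lemma~\ref{lem_deffs}, upgrades to $\pi|A\bar b$, forcing $\models\phi(a^*;b_k)$ for every $k$ and contradicting $\neg\phi(a^*;b_k)$. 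Compactness then produces a finite $\zeta(x)\in\pi|A$ and $n<\omega$ such that $\zeta(x)\wedge\bigwedge_{k<n}\neg\phi(x;b_k)$ is inconsistent; hence $\zeta(x)\wedge\neg\phi(x;b)\in\tp(a/Ab)$ divides over $A$ via $(b_k)$, so $\tp(a/Ab)$ forks over $A$. The main obstacle is the preliminary base change used to secure the fs-coheir $r$: this requires a careful application of Lemma~\ref{lem_deffs} so as to preserve the hypothesis $a\models\pi|A$ when enlarging $A$ to a small model.
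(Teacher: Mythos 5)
Your proof of the first half of (NF) (dividing of $\tp(b/Aa)$) is fine and is essentially the paper's argument. The other two parts have genuine gaps.

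For (FS), the step ``$\tp(a/Mb')=p^*|Mb'$ is fs in $M$'' is unjustified and false in general: you chose $b'\models q|Ma$ \emph{after} $a$, so $\tp(a/Mb')$ is merely some extension of $p^*|M$ and need not be finitely satisfiable in $M$; you cannot have both $\tp(b'/Ma)$ and $\tp(a/Mb')$ be coheirs over $M$ just by ordering the choices. The tell is that your argument never uses (GS), only Ind-definability (via $d\phi$ and Lemma \ref{lem_deffs}); but (FS) fails for Ind-definable types that are not generically stable. Concretely, in DLO let $\pi(x)$ be the definable type at $+\infty$ (generated by $(x>y,\,y=y)$), $M$ any model and $b>M$: every step of your argument except the flagged one goes through (indeed one gets $a>b'$ with $\tp(b'/Ma)$ fs in $M$), yet $x>b\in\pi$ has no realization in $M$. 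The paper instead takes a sequence $(a_k)$ with $a_k\models\pi|Aa_{<k}$ whose type over $Mb$ is finitely satisfiable in $M$, uses (GS) to find $k$ with $\models\phi(a_k;b)$, and only then pulls the realization into $M$ by finite satisfiability of $\tp(a_k/Mb)$.

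For the second half of (NF), the same quantifier-order problem recurs: your assertion that ``$\tp(\bar b/Aa^*)$ is fs in $A$ for every $a^*$'' is false, since $a^*$ is an arbitrary realization chosen after the coheir Morley sequence $\bar b$; so Lemma \ref{lem_deffs} does not apply and the key inconsistency of $(\pi|A)\cup\{\neg\phi(x;b_k):k<\omega\}$ is left unproven (if you re-choose $\bar b$ as a coheir sequence over $Aa^*$ you do get $\models\phi(a^*;b_k)$, but then $\bar b$ is no longer the sequence occurring in the partial type you assumed consistent). Moreover the ``preliminary base change'' you flag is itself a real obstruction: over an arbitrary $A$ there need be no global extension of $\tp(b/A)$ finitely satisfiable in $A$, and there need be no model $M\supseteq A$ with $\tp(M/Aab)$ finitely satisfiable in $A$ (DLO with $A$ having a last element), so in general you would only obtain a formula $\zeta(x)\wedge\neg\phi(x;b)$ with $\zeta\in\pi|M$, i.e.\ a formula of $\tp(a/Mb)$ dividing over $M$; that does not yield that the type $\tp(a/Ab)$, whose formulas have parameters in $Ab$ only, forks over $A$. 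The paper avoids coheirs here altogether: assuming $\tp(a/Ab)$ does not fork over $A$, it builds (via Erd\H os--Rado) an $Ab$-indiscernible sequence $(a_k)$ of realizations of $\tp(a/Ab)$ with $\tp(a_k/Aba_{<k})$ non-forking over $A$, uses the first half to get $a_k\models\pi|Aa_{>k}$, and then contradicts (GS) applied to the reversed sequence. If you want to keep your route, you must both prove the inconsistency claim (it does not follow from Lemma \ref{lem_deffs} as written) and find a way to land on a formula over $Ab$ dividing over $A$, neither of which the current write-up does.
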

\begin{proof}
(FS): Fix a model $M\supseteq A$ and some $\phi(x;b)\in \pi$. Let $(a_k:k<\omega)$ be such that $a_k\models \pi|Aa_{<k}$ for all $k$ and $\tp((a_k)/Mb)$ is finitely satisfiable in $M$. Then by (GS), for some $k$, $a_k\models \phi(x;b)$. As $\tp(a_k/Mb)$ is finitely satisfiable in $M$, there is $a\in M$, $a\models \phi(x;b)$ as required.

(NF): We first show that $\tp(b/Aa)$ divides over $A$. Let $\pi' = \pi \cup \tp(a/A)$. It is a consistent type since $a\models \pi|A$ (in fact generically stable). Let $\bar a=(a_k)_{k<\omega} \models \pi'^{(\omega)}(\bar x)$. By Ramsey and compactness, we can assume that the sequence $\bar a$ is indiscernible over $A$. Then (GS) implies that the set $\{\neg \phi(a_k;y)\wedge d\phi(y) : k<\omega\}$ is inconsistent. Hence $\tp(b/Aa)$ divides over $A$.

Now assume that $\tp(a/Ab)$ does not fork over $A$. Build $(a_k:k<\omega)$ an indiscernible sequence of realizations of $\tp(a/Ab)$ such that $\tp(a_k/Aba_{<k})$ does does fork over $A$ (we can build such a sequence by building a very long one which satisfies only the non-forking condition and then obtain an indiscernible sequence from it using Erd\H os-Rado). By transitivity of non-forking, for every $k$, $\tp(a_{>k}/Aa_k)$ does not fork over $A$. Therefore, by the previous paragraph, $a_k\models \pi|Aa_{>k}$. By (GS) this implies that for every $\phi(x;b)\in \pi$, the set $\{k : \models \neg \phi(a_k;b)\}$ is finite. As $\tp(a_k/Ab)$ is equal to $\tp(a/Ab)$ for all $k$, we obtain a contradiction.
\end{proof}

\begin{prop}\label{prop_gssym}($T$ is NIP.)
Let $\pi$ be a partial type over $\monster$ which is Ind-definable over $A$. Then $\pi$ is generically stable if and only if the following holds:

(Sym) whenever $(a_k:k<\omega)$ is indiscernible over $A$ such that $a_k\models \pi|Aa_{<k}$, then $a_k\models \pi|Aa_{\neq k}$.
\end{prop}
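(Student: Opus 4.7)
I would prove both directions by contradiction, using NIP's alternation bound as the main combinatorial tool and Ind-definability (via the witnessing formula $d\phi$) to propagate membership in $\pi$ along indiscernible sequences.

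For the direction (GS) $\Rightarrow$ (Sym), take an $A$-indiscernible sequence $(a_k)_{k<\omega}$ with $a_k\models \pi|Aa_{<k}$. By $A$-indiscernibility it suffices to show $a_0\models \pi|Aa_{>0}$. Suppose not: there is $\phi(x;y)\in L$ and a parameter $b_0=b(a_{i_1},\ldots,a_{i_n};\bar c)$ with $\bar c\in A$, $0<i_1<\ldots<i_n$, such that $\phi(x;b_0)\in \pi$ while $\neg\phi(a_0;b_0)$. By Ind-definability, choose $d\phi(y)\in L(A)$ with $d\phi(b_0)$ holding and $\phi(x;b')\in \pi$ for every $b'\models d\phi$. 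Indiscernibility over $A$ then yields $d\phi(b(a_{k_1},\ldots,a_{k_n};\bar c))$ and $\neg\phi(a_{k_0};b(a_{k_1},\ldots,a_{k_n};\bar c))$ for all $k_0<k_1<\ldots<k_n$. Now produce a Morley-like sequence $(c_\ell)_{\ell<\omega}$ of $\pi$ over $Ab_0$, on which every $c_\ell$ satisfies $\phi(x;b_0)$ by assumption (GS) applied to that sequence with the formula $\phi(x;b_0)\in\pi$. The plan is to interleave $(c_\ell)$ with $(a_k)$ into a single $A$-indiscernible sequence on which $\phi(x;b_0)$ alternates unboundedly between the $c$-entries (true) and the $a$-entries (false), contradicting the NIP alternation bound for $\phi$.

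For the direction (Sym) $\Rightarrow$ (GS), take $(a_k)_{k<\omega}$ with $a_k\models \pi|Aa_{<k}$ and $\phi(x;b)\in \pi$. By a standard Erd\H os--Rado extraction, pass to an $Ab$-indiscernible subsequence still Morley-like over $A$ (build a long Morley-like sequence, then extract an $Ab$-indiscernible one of order type $\omega$). By $Ab$-indiscernibility the truth value of $\phi(a_k;b)$ is constant in $k$, and it suffices to rule out the constant value being false. Suppose $\neg\phi(a_k;b)$ for all $k$. By (Sym), $a_k\models\pi|Aa_{\neq k}$ for every $k$, so the sequence is ``reversible''; in particular one can prepend entries to extend $(a_k)$ to an $A$-indiscernible Morley-like sequence indexed by $\mathbb Z$. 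Choose $c\models \pi|A(a_k)_{k\in\mathbb Z}b$, which exists by consistency of $\pi$ and satisfies $\phi(c;b)$. Iterating (Sym) to obtain enough symmetry, one can realize many such $c$'s and interleave them with the $a_k$'s to form an $A$-indiscernible sequence on which $\phi(x;b)$ alternates unboundedly, again contradicting NIP.

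The main obstacle in both directions is the construction of the $A$-indiscernible interleaved configuration that produces the infinite alternation required to contradict NIP: one must merge two Morley-like families --- one on which $\phi(x;b)$ fails, one on which it holds --- while preserving indiscernibility over $A$. In the forward direction, (GS) supplies the ``true'' family (a Morley-like sequence of $\pi|Ab_0$) and Ind-definability through $d\phi$ lets indiscernibility propagate to all shifts; in the backward direction, (Sym) is precisely the symmetry that licenses inserting new realizations of $\pi$ at arbitrary positions of the indiscernible sequence. Managing the bookkeeping of indices $i_1<\ldots<i_n$ together with the NIP bound on $\phi$ is the delicate part of making the interleaving combinatorially tight enough to force the contradiction.
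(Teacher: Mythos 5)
The hard direction, (Sym) $\Rightarrow$ (GS), has a genuine gap exactly at the step you yourself flag as delicate. Having fixed the $A$-indiscernible Morley-like sequence $(a_k)$ with $\neg\phi(a_k;b)$ for all $k$ and $\phi(x;b)\in\pi$, you propose to realize elements $c\models\pi|A(a_k)_k b$ (which indeed satisfy $\phi(x;b)$) and to ``interleave them with the $a_k$'s to form an $A$-indiscernible sequence'' on which $\phi(x;b)$ alternates infinitely. Nothing licenses this interleaving: (Sym) says that members of the given sequence realize $\pi$ over the rest of that same sequence; it does not say that external realizations of $\pi$ can be inserted into the sequence while preserving indiscernibility. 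Since $\pi$ is only a partial type, a realization of $\pi|A\bar a b$ need not even have the same complete type over $A$ as the $a_k$, so the mixed sequence is in general not indiscernible, and any attempt to repair this by Ramsey/EM extraction destroys the very alternation pattern you need; the $\mathbb Z$-reindexing does not help either. The paper proceeds differently: it keeps the sequence $(a_k)$ fixed and manufactures new \emph{parameters} $b_s\equiv_A b$ realizing arbitrary truth patterns $\phi(a_k;b_s)^{s(k)}$, by induction on the length of $s$. The coordinate-flipping step is where (Sym) enters: given $b'$ realizing the pattern away from $k_*$, the formula $(\exists y)\bigl(\theta(y)\wedge\bigwedge_{k\neq k_*}\phi(a_k;y)^{s(k)}\wedge\phi(x;y)\bigr)$ lies in $\pi$ (by $A$-invariance, $\phi(x;b')\in\pi$, and closure of $\pi$ under consequence), and $a_{k_*}\models\pi|Aa_{\neq k_*}$ then yields $b_s$. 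This produces shattering of $\{a_k\}$ by $\phi$, contradicting NIP. Your outline contains no mechanism playing this role, so the contradiction is never actually reached.

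The other direction, which the paper treats as immediate, is also not handled correctly. The reduction ``it suffices to show $a_0\models\pi|Aa_{>0}$'' is unjustified: for $k>0$ the offending instance may involve parameters $a_j$ on both sides of $a_k$, and $(a_k;a_{\neq k})$ is not $A$-conjugate to $(a_0;a_{>0})$. Moreover the same unsupported interleaving reappears there, and no appeal to NIP is needed at all: since ``$x\models\pi|A\bar y$'' is type-definable over $A$ (Lemma \ref{lem_def}), the Morley-like condition is part of the EM-type over $A$ of the sequence, so one may extend the sequence by inserting infinitely many new elements into the cut occupied by $a_k$ relative to the finitely many $a_j$ occurring in $b$; these new elements form a Morley-like sequence for $\pi$ over $A$ and, by $A$-indiscernibility, all fail the fixed instance $\phi(x;b)\in\pi$, contradicting (GS) directly.
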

\begin{proof}
It is clear that (GS) implies (Sym). We show the converse.

Let $(a_k:k<\omega)\models \pi^{(\omega)}(\bar x)$ and assume that for some $\phi(x;b)\in \pi$, the set $\{ k: \models \neg \phi(a_k;b)\}$ is infinite. Without loss, $\neg \phi(a_k;b)$ holds for all $k$ and then by Ramsey and compactness we may assume that the sequence $(a_k:k<\omega)$ is indiscernible. Then by (Sym), we have $a_k \models \pi|Aa_{\neq k}$ for all $k$. We will show the following statement by induction on $l$:

For every $s \in 2^l$, there is $b_s$, $\tp(b_s/A)=\tp(b/A)$ and for $k<l$, we have $\models \phi(a_k;b_s)\iff s(k)=1$. This will contradict NIP.

For $l=1$, we set $b_{\langle 0\rangle} = b$ and as $\phi(x;b)\in \pi$, there is $b_{\langle 1\rangle}$ such that $\phi(a_0;b_{\langle 1\rangle})$ holds and $\tp(b_{\langle 1\rangle}/A)=\tp(b/A)$.

Assume we know it for $l$ and let $s \in  2^{l+1}$.
If $s(k)=0$ for all $k$, we may take $b_s = b$. Otherwise, take some $k_*<l$ such that $s(k_*)=1$. By induction hypothesis, there is $b'$, $\tp(b'/A)=\tp(b/A)$ such that for $k\leq l$, $k\neq k_*$, we have $\models \phi(a_k;b')^{s(k)}$. We have $\phi(x;b')\in \pi$ by invariance. Therefore for any formula $\theta(y)\in \tp(b/A)$, we have $(\exists y) \theta(y) \wedge \bigwedge_{k\leq l, k\neq k_*} \phi(a_k;y)^{s(k)} \wedge \phi(x;y) \in \pi(x)$. As $a_{k_*} \models \pi|Aa_{\neq k_*}$, we can find $b_s$ as required.
\end{proof}

\begin{prop}\label{prop_cargenstable}
Let $\pi$ be $A$-invariant. Assume that for all $B\supseteq A$, and for all $p\in S_x(B)$ extending $\pi|_B$, $\pi$ is included in every global non-forking extension of $p$. Then $\pi$ satisfies (GS).
\end{prop}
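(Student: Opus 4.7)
The plan is to assume (GS) fails and build a global non-forking extension of a restriction of $\pi$ that omits $\phi(x;b)$, contradicting the hypothesis. So suppose $(a_k)_{k<\omega}$ satisfies $a_k \models \pi|Aa_{<k}$, and there is $\phi(x;b) \in \pi$ with $\models\neg\phi(a_k;b)$ for infinitely many $k$; passing to a subsequence, I assume $\neg\phi(a_k;b)$ for every $k$.

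First I would extract, via Ramsey and compactness over $Ab$, an $Ab$-indiscernible sequence whose finite initial segments have the same type over $Ab$ as those of some infinite subsequence of the original. Since the Morley-like condition $a_k \models \pi|Aa_{<k}$ depends only on $\tp(a_0\cdots a_k/A)$ and $\pi$ is $A$-invariant, this condition transfers to the extracted sequence; after relabelling I may assume $(a_k)_{k<\omega}$ itself is $Ab$-indiscernible, with $a_k \models \pi|Aa_{<k}$ and $\neg\phi(a_k;b)$ for every $k$.

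Next I extend by compactness to $(a_k)_{k\le\omega}$ still $Ab$-indiscernible. For each $n$, indiscernibility gives $\tp(a_\omega/Aa_{<n}) = \tp(a_n/Aa_{<n}) \supseteq \pi|Aa_{<n}$, so $a_\omega \models \pi|B$ where $B := A \cup \{a_k : k<\omega\}$; and $\neg\phi(a_\omega;b)$ still holds by $Ab$-indiscernibility. Set $p := \tp(a_\omega/B)\supseteq \pi|B$. The key observation is that $\tp(a_\omega/Bb)$ is finitely satisfiable in $B$: a formula $\psi(x;\bar e, b)$ holding of $a_\omega$, with $\bar e$ involving $A$-parameters and some $a_{i_0},\ldots,a_{i_{m-1}}$, is witnessed by any $a_n$ with $n>i_{m-1}$, by $Ab$-indiscernibility. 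Extending to a global type $q$ finitely satisfiable in $B$ produces a global non-forking extension of $p$ over $B$, via the standard argument that a coheir cannot contain a dividing formula. Then $\neg\phi(x;b)\in q$ while $\phi(x;b)\in\pi$, so the hypothesis forces $\phi(x;b)\in q$, a contradiction.

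The main delicate step is verifying that the Morley-like condition $a_k \models \pi|Aa_{<k}$ survives the Ramsey extraction over $Ab$; this is exactly where the $A$-invariance of $\pi$ enters. Everything else is standard once one sees that the right base to work over is $B = Aa_{<\omega}$, chosen so that $a_\omega$, at the top of the $Ab$-indiscernible sequence $(a_k)_{k\le\omega}$, realizes $\pi|B$ while still satisfying $\neg\phi(x;b)$.
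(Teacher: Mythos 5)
Your proof is correct and rests on the same mechanism as the paper's: produce a global type finitely satisfiable in $B = A\cup\{a_k : k<\omega\}$ that extends $\pi|_B$ yet contains $\neg\phi(x;b)$, and use the standard fact that a global coheir over $B$ does not fork over $B$ to contradict the hypothesis. The paper reaches this more directly, observing that $\pi|_B\cup\{\neg\phi(x;b)\}$ is already finitely satisfiable in $\{a_k : k<\omega\}$ (witnessed by tail elements $a_k$ with $\models\neg\phi(a_k;b)$) and taking a global coheir of it, so your Ramsey extraction and the added limit point $a_\omega$ are a correct but unnecessary detour.
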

\begin{proof}
Assume that $\pi$ satisfies the assumption and let $(a_i:i<\omega)$ be such that $a_i \models \pi|Aa_{<i}$. If for some $\phi(x;b)\in \pi$, $\{i: \models \neg \phi(a_i;b)\}$ is infinite, then the set $\pi(x)|Aa_{<\omega} \cup \{x\neq a_i:i<\omega\} \cup \{\neg \phi(x;b)\}$ is finitely satisfiable in $\{a_i:i<\omega\}$. As such, it has a global extension $q$ finitely satisfiable in that same set. Then $q$ is a fortiori finitely satisfiable in $B=A\cup \{a_i:i<\omega\}$ and extends $\pi|_B$. But $\pi \nsubseteq q$; contradiction.
\end{proof}

\begin{lemme}\label{lem_reducepi}
Let $\pi(x)$ be generically stable over $A$ and let $\pi_0(x)\subseteq \pi(x)$ be a partial Ind-definable type, definable over some $A_0\subseteq A$. Then there is $\pi_*(x)\subseteq \pi(x)$ containing $\pi_0(x)$ which is generically stable and defined over some $A_*\subseteq A$ of size $\leq |A_0|+|T|$.
\end{lemme}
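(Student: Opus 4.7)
Fix a defining Ind-definability schema $\mathcal{S}:=\{(\phi_i(x;y_i),d\phi_i(y_i))\}_{i<\kappa}$ for $\pi$, with each $d\phi_i\in L(A)$, chosen so that $\mathcal{S}$ contains a defining schema for $\pi_0$ over $A_0$. The plan is to build $A_*$ as the union of an increasing chain $A_0\subseteq A_1\subseteq\cdots\subseteq A$, each $A_n$ of size $\leq|A_0|+|T|$, and to let $\pi_*$ be the partial type generated by those pairs $(\phi_i,d\phi_i)\in\mathcal{S}$ with $d\phi_i\in L(A_*)$. Then $\pi_*\subseteq\pi$, $\pi_*$ is Ind-definable over $A_*$ by construction, and $\pi_0\subseteq\pi_*$ because the chosen $\pi_0$-schema already sits inside the $\pi_*$-schema.

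The closure at each successor stage $A_n\leadsto A_{n+1}$ is what secures (GS). For each pair $(\phi,d\phi)\in\mathcal{S}$ with $d\phi\in L(A_n)$, the partial type $\pi^{(\omega)}(\bar x)\cup\{d\phi(y)\}\cup\{\neg\phi(x_k;y):k<\omega\}$ is inconsistent: a realization would produce a Morley sequence $(a_k)$ of $\pi$ over $A$ together with $b$ satisfying $\phi(x;b)\in\pi$ but $\neg\phi(a_k;b)$ for all $k$, contradicting (GS) of $\pi$. By compactness, a finite fragment suffices: there exist $m<\omega$, a tuple $\bar c\in A$, and a formula $\psi(\bar x;\bar c)$ implied over $\pi^{(m)}$ by finitely many instances $\phi_j(x_{k_j};\bar b_j)$ of pairs $(\phi_j,d\phi_j)\in\mathcal{S}$, such that $\psi\wedge d\phi(y)\wedge\bigwedge_{k<m}\neg\phi(x_k;y)$ is inconsistent. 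I would add to $A_{n+1}$ both $\bar c$ (together with any further constants occurring in the $\bar b_j$'s) and the defining parameters of each such $d\phi_j$. Only $\leq|T|+|A_n|$ parameters are added per stage, so $|A_*|\leq|A_0|+|T|$.

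For (GS) of $\pi_*$, suppose $(a_k)_{k<\omega}$ satisfies $a_k\models\pi_*|A_*a_{<k}$ and $\phi(x;b)\in\pi_*$ is witnessed by $d\phi\in L(A_*)$, with $\neg\phi(a_k;b)$ for infinitely many $k$. Pass to a subsequence so that $\neg\phi(a_k;b)$ holds for every $k$. By the closure, there are $m$, $\bar c\in A_*$, and $\psi(\bar x;\bar c)$ as above; crucially, each constituent pair $(\phi_j,d\phi_j)$ used now has $d\phi_j\in L(A_*)$, so it is a defining pair of $\pi_*$, and hence $\psi\in\pi_*^{(m)}|_{A_*}$. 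Since $(a_0,\ldots,a_{m-1})\models\pi_*^{(m)}|_{A_*}$ by hypothesis on the $a_k$'s, it satisfies $\psi(\bar x;\bar c)$. Combined with $\models d\phi(b)$ and $\neg\phi(a_k;b)$ for $k<m$, this contradicts the inconsistency established in the previous step.

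The main obstacle is ensuring that the finite compactness witness $\psi$ lives in $\pi_*^{(m)}$ over $A_*$, rather than merely in $\pi^{(m)}$ over $A$. That is exactly what drives the closure: it is not enough to pull $\bar c$ into $A_*$, one must also pull in the defining parameters of the Ind-definability witnesses for the constituent instances making up $\psi$, so that those constituents themselves belong to the $\pi_*$-schema. Organizing this so that the closure both remains of size $\leq|A_0|+|T|$ and handles every pair $(\phi,d\phi)$ eventually placed in $L(A_*)$ is the technical heart of the argument.
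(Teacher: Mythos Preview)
Your approach is essentially the paper's: iterate a compactness step $\omega$ times, at each stage adding to the current subtype the finitely many schema pairs of $\pi$ needed to witness (GS) for the pairs already present, and take the union. The paper phrases this as building an increasing chain of Ind-definable subtypes $\pi_0\subseteq\pi_1\subseteq\cdots$ (writing simply ``$\psi$ is already in $\pi_1^{(n)}$ for some finitely definable $\pi_1\subseteq\pi$''), while you track parameter sets $A_n$ and recover $\pi_*$ from them; the two are equivalent.

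One point to tighten: your description of the compactness witnesses as conjunctions of single-variable instances $\phi_j(x_{k_j};\bar b_j)$ is not quite right, since the generators of $\pi^{(m)}|_A$ include the cross-variable conditionals $d\phi_j(x_{<k},c)\to\phi_j(x_k;x_{<k},c)$, and (GS) genuinely uses the Morley-sequence condition rather than just $\bigwedge_k\pi(x_k)$. Your conclusion is still correct---each such generator is built from a single schema pair $(\phi_j,d\phi_j)$ of $\pi$ together with parameters in $A$, so adding those $d\phi_j$-parameters and the constants to $A_{n+1}$ does force $\psi\in\pi_*^{(m)}|_{A_*}$---but the write-up should reflect the actual shape of the generators.
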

\begin{proof}
This is a simple compactness argument. As $\pi(x)$ is generically stable, for any $(\phi(x;y),d\phi(y))$ in $\pi_0(x)$, there is some $\psi(x_1\ldots, x_n )$ in $\pi^{(n)}(x_1,\ldots,x_n)$ such that $\models (\forall x_1\ldots x_n,y) (\psi(x_1\ldots x_n) \wedge d\phi(y))\impl \bigvee_{k\leq n} \phi(x_k;y)$. The formula $\psi(x_1,\ldots,x_n)$ is already in $\pi_1^ {(n)}$ for some finitely definable $\pi_1(x)\subseteq \pi(x)$. There are at most $|A_0|+|T|$ many schemes $(\phi(x;y),d\phi(y))$ in $\pi_0(x)$. Doing the same procedure for each of them, we obtain some $\pi_1(x)\subseteq \pi(x)$ Ind-definable over a set of size $|T|+|A_0|$ such that for any formula $\phi(x;d)\in \pi_0$, $\pi_1^{(\omega)}(x_0,\ldots) \wedge \bigwedge_{k<\omega} \neg \phi(x_k;d)$ is inconsistent. We may assume that $\pi_1$ contains $\pi_0$. Now iterate this construction to obtain $\pi_2(x), \pi_3(x),\ldots$. Finally set $\pi_\omega = \bigcup_{k<\omega} \pi_k$. Then $\pi_\omega$ is generically stable and contains $\pi_0$.
\end{proof}

Observe that if $\{\pi_i(x):i<\alpha\}$ is any small set of partial types, each of which is generically stable over $A$, then if $\bigcup_{i<\alpha} \pi_i(x)$ is consistent, then it is generically stable over $A$. This follows at once from the definition. If we assume that $\bigcup_{i<\alpha} \pi_i(x)|_A$ is consistent and does not fork over $A$, then we can conclude that $\bigcup_{i<\alpha} \pi_i(x)$ is consistent. This is because any global non-forking extension of $\bigcup_{i<\alpha} \pi_i(x)|_A$ will satisfy all the types $\pi_i(x)$ by (NF). In particular, if $A$ is an extension base (no type over $A$ forks over $A$), for any type $p(x)\in S(A)$ the union of all the partial types $\pi(x)$ generically stable over $A$ such that $\pi|_A \subseteq p$ is again a (consistent) generically stable partial type. As $x=x$ is such a type, there is a maximal $\pi(x)$ generically stable over $A$ such that $\pi|_A \subseteq p$.

Over arbitrary sets $A$, the situation is less clear. We can however state the following two lemmas (which will not be used later).
\begin{lemme}\label{lem_morleyseq}($T$ is NIP.)
Let $\pi(x)$, $\eta(x)$ be Ind-definable over $A$. Assume that $\pi^{(\omega)}(\bar x)|_A \cup \eta^{(\omega)}(\bar x)|_A$ is consistent, then $\pi(x)\cup \eta(x)$ is consistent.
\end{lemme}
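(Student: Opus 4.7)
The strategy is to realize $\pi^{(\omega)}|_A \cup \eta^{(\omega)}|_A$ in $\monster$ by a single sequence $(a_k:k<\omega)$, translate this into the Morley-chain conditions $a_k\models \pi|Aa_{<k}$ and $a_k\models \eta|Aa_{<k}$ for every $k$, and then apply (GS) to $\pi$ and $\eta$ separately to find, inside this one sequence, simultaneous realizations of any prescribed finite subset of $\pi\cup\eta$; compactness will then deliver consistency of $\pi\cup\eta$.

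The translation in the first step is the only delicate point. Using the scheme presentation of an Ind-definable partial type, $\pi$ is given by pairs $(\phi(x;y),d\phi(y))$ with $\phi\in L$ and $d\phi\in L(A)$, equivalently by the $L(A)$-formulas $d\phi(y)\impl\phi(x;y)$. The defining clause of the tensor product then places each formula $d\phi(x_{<k})\impl\phi(x_k;x_{<k})$ into $\pi^{(\omega)}|_A$, so any realization of $\pi^{(\omega)}|_A$ automatically satisfies $a_k\models \pi|Aa_{<k}$ for every $k$; symmetrically for $\eta$. Lemma \ref{lem_def} is the structural reason this conversion goes through: Ind-definability over $A$ is precisely what makes ``$a\models \pi|A\bar b$'' a type-definable condition on $(a,\bar b)$, and hence expressible by formulas in $\pi^{(\omega)}|_A$.

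Once the chain conditions are in hand, (GS) applied to $(a_k)$ as a Morley-chain of $\pi$ gives, for each $\phi(x;b)\in\pi$, that $\{k:\not\models\phi(a_k;b)\}$ is finite, and the analogue holds for each $\psi(x;c)\in\eta$. A finite subset of $\pi\cup\eta$ therefore excludes only finitely many indices, so some $a_k$ realizes all of it; by compactness $\pi\cup\eta$ is consistent. The main obstacle is really just the bookkeeping in the translation step: once the Morley-chain conditions are extracted from the $A$-level tensor hypothesis, generic stability and compactness dispatch the rest.
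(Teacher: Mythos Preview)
Your argument invokes (GS) for $\pi$ and for $\eta$, but the lemma assumes only that $\pi$ and $\eta$ are Ind-definable over $A$; generic stability is \emph{not} part of the hypothesis. Without (GS) there is no reason, given a sequence $(a_k)$ with $a_k\models \pi|Aa_{<k}$, that the set $\{k:\not\models\phi(a_k;b)\}$ should be finite for $\phi(x;b)\in\pi$. So the step ``apply (GS) to $\pi$ and $\eta$ separately'' is unsupported, and the proof breaks at exactly the point you call routine.

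The paper's proof does not use (GS) at all. It argues by contradiction: if $\pi\cup\eta$ is inconsistent, then for some $\phi(x;b)$ we have $\phi(x;b)\in\pi$ and $\neg\phi(x;b)\in\eta$. Using the Morley-chain conditions $a_k\models\pi|Aa_{<k}$ and $a_k\models\eta|Aa_{<k}$ (which you correctly extract), one builds for each $s\in 2^{<\omega}$ a parameter $b_s\equiv_A b$ with $\models\phi(a_i;b_s)\iff s(i)=1$: at stage $n$, the formula $(\exists y)\theta(y)\wedge\bigwedge_{k<n}\phi(a_k;y)^{s(k)}\wedge\phi(x;y)$ lies in $\pi$, so $a_n\models\pi|Aa_{<n}$ produces $b_{s\hat{~}1}$, and the analogous formula with $\neg\phi(x;y)$ lies in $\eta$, producing $b_{s\hat{~}0}$. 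This shatters $(a_i)$ and contradicts NIP. The NIP hypothesis is doing the real work here, not any stability property of $\pi$ or $\eta$; your approach never uses NIP, which is already a sign something is missing.
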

\begin{proof}
Assume that for some $b$, $\phi(x;b)\in \pi(x)$, while $\neg \phi(x;b)\in \eta(x)$. Let $(a_i:i<\omega)\models (\pi^{(\omega)}|_A \cup \eta^{(\omega)}|_A)$. We will build inductively tuples $b_s$, $s \in 2^{<\omega}$, such that $b_s\equiv_A b$ and $\models \phi(a_i;b_s) \iff s(i)=1$ for $i$ in the domain of $s$.

As $\phi(x;b)\in \pi$, for any formula $\theta(y)\in \tp(b/A)$, we have $a_0\models (\exists y)\theta(y)\wedge \phi(x;y)$. Also as $\neg \phi(x;b)\in \eta$, $a_0 \models (\exists y)\theta(y)\wedge \neg \phi(x;y)$. Hence we can find $b_{\langle 0\rangle}$ and $b_{\langle 1\rangle}$ as required. Assume we have $b_s$ for $s\in 2^{< n}$. Let $s\in 2^{n}$. Since $ \phi(x;b_s)\in \pi(x)$, for every $\theta(y)\in \tp(b/A)$, we have $(\exists y)\theta(y)\wedge \bigwedge_{k<n} \phi(a_k;y)^{s(k)} \wedge \phi(x;y) \in \pi(x)$. As $a_n \models \pi|Aa_{<n}$, we can find $b_{s\hat{~}1}$ as required. Similarly using $\eta$ instead of $\pi$, we find $b_{s\hat{~}0}$. At the end, we contradict NIP.
\end{proof}

%

\begin{lemme}
Let $\pi(x)$ and $\eta(x)$ be generically stable over $A$. Assume furthermore that $\pi^{(\omega)}(\bar x)$ is generically stable over $A$ and that $ \pi(x)|_A \cup \eta(x) |_A$ is consistent. Then $\pi(x)\cup \eta(x)$ is consistent.
\end{lemme}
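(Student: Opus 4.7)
The plan is to reduce to Lemma \ref{lem_morleyseq}, which gives consistency of $\pi(x) \cup \eta(x)$ once $\pi^{(\omega)}(\bar x)|_A \cup \eta^{(\omega)}(\bar x)|_A$ is known to be consistent. Thus it suffices to exhibit a single $\omega$-sequence $(a_i)_{i<\omega}$ realizing both $\pi^{(\omega)}|_A$ and $\eta^{(\omega)}|_A$.

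Fix $a_0 \models (\pi \cup \eta)|_A$ (given by hypothesis) and set $p = \tp(a_0/A)$, so $p \supseteq \pi|_A \cup \eta|_A$. I would construct inductively a sequence $(a_i)_{i<\omega}$ in which each $a_i$ realizes $p$ and $\tp(a_i/Aa_{<i})$ is non-forking over $A$. Given such a sequence, applying (NF) to $\pi$ (valid since $a_i \models \pi|_A$ by $p$) yields $a_i \models \pi|_A a_{<i}$, and applying (NF) to $\eta$ yields $a_i \models \eta|_A a_{<i}$. Hence $(a_i) \models \pi^{(\omega)}|_A \cup \eta^{(\omega)}|_A$, completing the reduction.

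The crux is the inductive step: given $a_{<n}$ satisfying the hypotheses, find $a_n \models p$ with $\tp(a_n/Aa_{<n})$ non-forking over $A$, i.e. show that $p$ admits a non-forking extension over $Aa_{<n}$. By the chain rule for non-forking applied to the inductive assumptions, $\tp(a_1\ldots a_{n-1}/Aa_0)$ does not fork over $A$. This is where generic stability of $\pi^{(\omega)}$ enters essentially: as $(a_0,\ldots,a_{n-1}) \models \pi^{(n)}|_A$ and $\pi^{(n)}$ is generically stable over $A$ (inherited as a projection of the generically stable $\pi^{(\omega)}$), one invokes the symmetry of non-forking available for generically stable partial types, via (NF) applied to $\pi^{(n)}$, to conclude that $\tp(a_0/Aa_1\ldots a_{n-1})$ also does not fork over $A$. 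This exhibits a non-forking extension of $p$ over $Aa_{<n}$, and by saturation one can choose $a_n$ realizing this extension and distinct from $a_0,\ldots,a_{n-1}$.

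The main obstacle is justifying the non-forking symmetry in the inductive step. This is precisely the place where the hypothesis ``$\pi^{(\omega)}$ generically stable over $A$'' is used in an essential way, going beyond what generic stability of $\pi$ and $\eta$ individually would provide; the combined control over Morley tuples furnished by $\pi^{(\omega)}$ is what makes the required non-forking extensions of $p$ available at every stage.
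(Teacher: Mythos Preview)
There is a genuine gap in the symmetry step, and the overall route is weaker than the paper's.

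The appeal to (NF) for $\pi^{(n)}$ does not yield the non-forking symmetry you claim. Property (NF) says: if $\bar c \models \pi^{(n)}|_A$ and some $\phi(\bar x;b)\in\pi^{(n)}$ fails at $\bar c$, then both $\tp(\bar c/Ab)$ and $\tp(b/A\bar c)$ fork over $A$. This controls the relationship between a realization of $\pi^{(n)}$ and an \emph{external} parameter $b$; it says nothing about exchanging non-forking between the coordinates $a_0$ and $a_1\ldots a_{n-1}$ of a single realization. So the implication ``$\tp(a_1\ldots a_{n-1}/Aa_0)$ does not fork over $A$ $\Rightarrow$ $\tp(a_0/Aa_1\ldots a_{n-1})$ does not fork over $A$'' is unjustified. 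Even granting it, you would obtain a non-forking extension of $p$ over $Aa_1\ldots a_{n-1}$, not over $Aa_{<n}=Aa_0\ldots a_{n-1}$; the further extension again requires that $A$ be an extension base, which is not assumed. The case $n=1$ already shows the problem: your argument only recovers that $\tp(a_0/A)$ does not fork over $A$, which produces no $a_1$.

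Second, even if the sequence could be built, the reduction to Lemma~\ref{lem_morleyseq} imports an NIP hypothesis that the present lemma does not carry. The paper's proof avoids both issues by arguing directly: it first checks that $\pi|_A\cup\eta|_{AB}$ is consistent for every $B$; then, assuming $\phi(x;b)\in\pi$ and $\neg\phi(x;b)\in\eta$, it takes $N$ maximal with $\pi^{(N)}|_A\cup\bigwedge_{i<N}\neg\phi(x_i;b)$ consistent, builds a long Morley sequence $(\bar x^i)_{i<\kappa}$ in $\pi^{(N)}$ with witnesses $b_i\equiv_A b$, and picks $a'\models \pi|_A\cup\eta|_{Ab_{<\kappa}}$. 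Generic stability of $\pi^{(\omega)}$ forces some $\bar x^i\models\pi^{(N)}|_{Aa'}$, so $a'\hat{~}\bar x^i\models\pi^{(N+1)}|_A$ with $\neg\phi$ throughout, contradicting maximality of $N$. This uses (GS) directly rather than any non-forking symmetry, and needs no NIP.
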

\begin{proof}
First note that the hypothesis that $\pi(x)|_A \cup \eta(x)|_A$ is consistent implies that $\pi(x)|_A \cup \eta(x)|_{AB}$ is consistent for any $B$. Indeed, if it was not consistent, there would be some $\phi(x;a)\in \pi(x)|_A$ and $\psi(x;b)\in \eta(x)|_{AB}$ whose conjuction is inconsistent. Then by compactness, there is some formula $\theta(y;a')\in \tp(b/A)$ such that $\phi(x;a)\vdash \neg (\exists y)(\theta(y;a')\wedge \psi(x;y))$. But the formula $(\exists y)(\theta(y;a')\wedge \psi(x;y))$ belongs to $\eta(x)|_A$, so $\pi(x)|_A \cup \eta(x)|_A$ is already inconsistent.
	
Assume that the conclusion of the lemma does not hold, then there is $\phi(x;b)\in \pi(x)$, $\neg \phi(x;b)\in \eta(x)$. Let $N$ be maximal such that $\pi^{(N)}(x_{<N})|_A \cup \bigwedge_{i<N} \neg \phi(x_i;b)$ is consistent. Let $(\bar x^ i : i<\kappa)$ be a long Morley sequence in $\pi^{(N)}$ over $A$ such that for each $i$, there is $b_i\models \bigwedge_{j<N} \phi(x_j ^ i;y)$, $b_i \equiv_A b$. Now let $a'\models \pi(x)|_A \cup \eta(x)|_{Ab_{<\kappa}}$. Then $\neg \phi(a';b_i)$ holds for all $i$. But also for some $i$, $\bar x^i \models \pi^{(N)}|Aa'$, so $a'\hat{~}\bar x^ i$ satisfies $\pi^{(N+1)}$ over $A$. This contradicts the maximality of $N$.
\end{proof}

\begin{ex}
Consider the model companion of the theory of meet-trees in the language $\{\leq ,\wedge\}$ with an additional function $f$ from the main sort to an extra sort $C$ with no structure on it. This is an NIP ($\omega$-categorical) theory. Let $q(y)$ be the global type of a new element of $C$ and let $\pi_{\emptyset}(x)$ be the empty type of an element of the main sort. Then $q(y)$ and $\pi_{\emptyset}(x)$ are generically stable, but $q(y)\otimes \pi_{\emptyset}(x)$ is not. Also $\pi(y,x) = q(y)\cup \pi_{\emptyset}(x)$ is generically stable, but $\pi^{(2)}$ is not.

To see that $q(y)\otimes \pi_{\emptyset}(x)$ is not generically stable, consider a sequence $(c_i,a_i : i<\omega)$ such that:

-- $a_i\wedge a_j=a_i\wedge a_{j'}$, for all $j,j'>i$ and $a_i\wedge a_{i+1} < a_{i+1}\wedge a_{i+2}$;

-- $f(a_i \wedge a_{i+1})=c_i$;

-- $c_i$ satisfies $q$ over $a_{\leq i}$, $c_{<i}$.

\noindent
This is a Morley sequence of $q\otimes \pi_{\emptyset}$, but the sequence in the reverse order is not. Hence $q\otimes \pi_{\emptyset}$ does not satisfy (GS) by Proposition \ref{prop_gssym}. Also $\pi^ {(2)}$ is not generically stable as it implies $q\otimes \pi_{\emptyset}$ (when restricted to two of its variables).
\end{ex}

The following proposition will not be used later in the paper, but Proposition \ref{prop_isgs} in the proof of the main theorem is inspired from it.

\begin{prop}\label{prop_gsquant}
Let $\alpha(y)$ be a partial type, generically stable over $A$. Fix some $a,b\in \monster$, $b\models \alpha(y)|_A$ and let $\rho(x,y)\subseteq \tp(a,b/A)$. Then the partial type $\pi(x):= (\exists y) (\alpha(y) \wedge \rho(x,y))$ is generically stable over $A$.
\end{prop}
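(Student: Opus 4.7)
The plan is to verify the two clauses defining generic stability — $A$-Ind-definability and condition (GS) — separately for $\pi$.

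For Ind-definability, fix $\phi(x;z)\in L$. By compactness and the Ind-definability of $\alpha$, $\phi(x;c)\in\pi$ if and only if there exist a scheme $(\alpha^{*}(y;w),d\alpha^{*}(w))$ in the Ind-definition of $\alpha$ and some $\rho_0(x,y)\in\rho\subseteq L(A)$ such that $(\exists d)\bigl[d\alpha^{*}(d)\wedge(\forall x,y)(\alpha^{*}(y;d)\wedge\rho_0(x,y)\to\phi(x;c))\bigr]$ holds. For each fixed triple this is an $A$-definable condition on $c$, so the union over triples exhibits $\{c:\phi(x;c)\in\pi\}$ as an $A$-Ind-definable set.

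For (GS) I would argue by contradiction. Let $(a_k)_{k<\omega}$ be a Morley sequence of $\pi$ over $A$ and write $\phi(x;c)=(\exists y)(\alpha^{*}(y;c)\wedge\rho_0(x,y))\in\pi$ with $\alpha^{*}(y;c)\in\alpha$ and $\rho_0\in L(A)$. Suppose $\neg\phi(a_k;c)$ for infinitely many $k$, so after Ramsey extraction we may assume $\neg\phi(a_k;c)$ for every $k$, giving $\alpha^{*}(y;c)\vdash\neg\rho_0(a_k,y)$. The key step is to construct witnesses $b_k$ with $(a_k,b_k)\models\rho$ such that $(b_k)_{k<\omega}$ is itself a Morley sequence of $\alpha$ over $A$. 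Granting this, $\rho_0(a_k,b_k)$ forces $\neg\alpha^{*}(b_k;c)$ for every $k$, while (GS) for $\alpha$ applied to $(b_k)$ forces $\alpha^{*}(b_k;c)$ for cofinitely many $k$ --- a contradiction.

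The main obstacle is the witness construction. A direct induction asking $b_k\models\alpha|Aa_{<\omega}b_{<k}$ with $(a_k,b_k)\models\rho$ requires consistency of $\alpha(y)|Aa_{<\omega}b_{<k}\cup\rho(a_k,y)$, and finite consistency reduces to verifying $(\exists y)(\alpha^{**}(y;d)\wedge\rho_1(a_k,y))$ for $d\in A\cup a_{<\omega}\cup b_{<k}$, whereas $a_k\models\pi|Aa_{<k}$ only handles $d\in Aa_{<k}$. My plan to absorb the extraneous parameters is to use Ind-definability of $\alpha$ to split $d=(d_A,d_a,d_b)$ and rewrite $\alpha^{**}(y;d)$ as a scheme with free variables for the $d_a,d_b$ positions, producing a $\pi|A$-formula that, applied at $a_k$ with $d_a$-positions filled by subtuples of $a_k$ and $d_b$-positions filled by inductively chosen sufficiently independent $b_j$'s, yields the required existential; $a_{>k}$-parameters are handled via $A$-indiscernibility after Erd\H{o}s--Rado extraction. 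Under NIP a cleaner route is to prove (Sym) for $\pi$ directly --- for $A$-indiscernible Morley $(a_k)$ and $\phi(x;c)\in\pi|Aa_{\neq k_0}$, finding a single $b^{*}\models\alpha|Aa_{<\omega}$ with $(a_{k_0},b^{*})\models\rho$ already witnesses $\phi(a_{k_0};c)$, since $c\in Aa_{\neq k_0}\subseteq Aa_{<\omega}$ forces $\alpha^{*}(b^{*};c)$ --- and then invoke Proposition~\ref{prop_gssym} to upgrade (Sym) to (GS).
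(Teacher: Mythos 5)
Your Ind-definability argument is fine (it is a direct compactness version of the paper's appeal to Lemma \ref{lem_def}), and your endgame --- reducing to a generating formula $(\exists y)(\alpha^{*}(y;c)\wedge\rho_0(x,y))$ so that (GS) for $\alpha$ need only be applied to the single formula $\alpha^{*}(y;c)$ --- would indeed close the proof once the witnesses exist (it even avoids the extra Ramsey step the paper uses to handle a general $\phi(x;c)\in\pi$). The genuine gap is precisely the step you yourself flag as the main obstacle: producing $b_k$ with $\rho(a_k,b_k)$ such that $(b_k)_{k<\omega}$ is a Morley sequence of $\alpha$ over $A$, and neither of your two sketches resolves it. The scheme-rewriting idea only shows that $a_k$ satisfies $A$-formulas of the shape $(\exists y)(\exists w)\left(d\alpha^{**}(w)\wedge\alpha^{**}(y;w)\wedge\rho_1(x,y)\right)$, i.e.\ it yields a witness $y$ adapted to \emph{some} tuple $w$ in the definition scheme, not to the specific tuple assembled from the previously chosen $b_{<k}$ (and from $a_{<\omega}$); nothing forces that witness to realize $\alpha$ over the actual $b_{<k}$, which is what the Morley property demands. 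The (Sym) route has the same problem in disguise: ``finding a single $b^{*}\models\alpha|Aa_{<\omega}$ with $(a_{k_0},b^{*})\models\rho$'' is not a free step --- the hypothesis $a_{k_0}\models\pi|Aa_{<k_0}$ only provides a witness realizing $\alpha$ over $Aa_{<k_0}$, and upgrading it to one that also works over the later $a_i$'s is exactly the content of (Sym) for $\pi$; as written you assume what is to be proved.

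The missing idea (the paper's) is to demand less of the witnesses and to move the sequence instead of the witnesses: require only $b_k\models\alpha|Aa_{<k}b_{<k}$ (no future $a$'s), observe that $\pi|_B=(\exists y)(\alpha(y)|_B\wedge\rho(x,y))$ for every $B\supseteq A$, and at stage $k+1$ use $a_{k+1}\models\pi|Aa_{\leq k}$ to find an automorphism $\sigma$ fixing $Aa_{\leq k}$ with $\sigma(a_{k+1})\models\pi|Aa_{\leq k}b_{\leq k}$, then replace the whole tail $a_{>k}$ by $\sigma(a_{>k})$. This is legitimate because it preserves $\tp((a_i)_{i<\omega}/A)$, and because a failure of (GS) is itself invariant under such replacements: by $A$-invariance of $\pi$ one can re-choose $c'\equiv_A c$ with $\phi(x;c')\in\pi$ and $\neg\phi(a_k;c')$ for all $k$ at the end. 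With that modification your induction goes through and your concluding contradiction via (GS) for $\alpha$ works verbatim.
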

\begin{proof}
Note that for any set $B\supseteq A$, $\pi | B = (\exists y)(\alpha(y)|B\wedge \rho(x,y))$.

Since $\alpha(y)$ is $A$-invariant, $\pi(x)$ is also $A$-invariant. We first show that $\pi$ is Ind-definable using Lemma \ref{lem_def}. Fix a variable $\bar z$ and let $X_\alpha(y,\bar z)$ be the set of pairs $\{(b,\bar c) :b\models \alpha|A\bar c\}$. For any tuples $a$ and $\bar c$, we have $a\models \pi|A\bar c$ if and only if there is $b$ such that $\rho(a,b)$ and $(b,\bar c)\in X_\alpha$. As $X_\alpha$ is type-definable by Lemma \ref{lem_def}, this whole condition is type-definable. By the lemma again, $\pi$ is Ind-definable.

We next show (GS). Assume for a contradiction that for some $\phi(x;c)\in \pi$, the set $\pi^{(\omega)}((x_k:k<\omega)) \cup \{\neg \phi(x_k;c) : k<\omega\}$ is consistent. Let $(a_k)_{k<\omega}$ realize it. Note that if we replace $(a_k:k<\omega)$ by a sequence $(a'_k:k<\omega)$ which has the same type over $A$, then we can find $c'\equiv_A c$ such that $\neg \phi(a'_k;c')$ holds for all $k$. By invariance of $\pi$, we have $\phi(x;c')\in \pi$, so $(a'_k)$ also witnesses a failure of (GS).

We build by induction on $k$ tuples $(b_k:k<\omega)$ such that $\tp(a_k,b_k/A)=\tp(a,b/A)$ and $b_k\models \alpha | Aa_{<k}b_{<k}$. We can find $b_0$ since $a_0\models \pi|A$. Assume we have found $b_k$. As $a_{k+1}\models \pi|Aa_{\leq k}$, there is an automorphism $\sigma$ fixing $Aa_{\leq k}$ such that $\sigma(a_{k+1})\models \pi | Aa_{\leq k}b_{\leq k}$. By the remark above, we may replace the sequence $a_{>k}$ by $\sigma(a_{>k})$, since this does not alter the type of the full sequence $(a_i)_{i<\omega}$. Hence we may assume that actually $a_{k+1}\models \pi|Aa_{\leq k}b_{\leq k}$ and then we find $b_{k+1}$ as required.

We now have a sequence $(a_kb_k:k<\omega)$ such that $(a_k)_{k<\omega}\models \pi^{(\omega)}((x_k)_{k<\omega})$ and $c$ such that $\phi(x;c)\in \pi$ and $\neg \phi(a_k;c)$ holds for all $k$. Since the conditions $(a_k)_{k<\omega}\models \pi^{(\omega)}((x_k)_{k<\omega})$ and $b_k\models \alpha | Aa_{<k}b_{<k}$ are type definable by Lemma \ref{lem_def}, we can apply Ramsey and compactness and assume that the sequence $(a_kb_k:k<\omega)$ is indiscernible over $Ac$. Using (GS) for the type $\alpha$, we conclude that for every $k$, $b_k\models \alpha|Ac$. But by the definition of $\pi$, this means that $a_k\models \pi|Ac$. Contradiction.
\end{proof}

\section{Compressible types}\label{sec_compr}

In this section, we define compressible types. This notion was introduced in \cite{ExtDef2} (without giving it a name), where it is shown that a theory is distal if and only if all types are compressible. The reader may take this as a definition of distal theories.

\smallskip
If $A\subset \monster$ is any set of parameters, and $a\in \monster$ is a tuple, we let $(A,a)$ be the structure whose universe is $A$, with the induced structure coming from $a$-definable sets: for every $\phi(\bar x;a)\in L(A)$, we have a predicate $R_\phi(\bar x)$ interpreted as $\{ \bar b\in A : \monster \models \phi(\bar b;a)\}$. If $\mathcal M \equiv (A,a)$, then it is isomorphic to $(A',a)$ for some $A'\subset \monster$.

We think of $(A,a)$ as a first order structure encoding the type of $a$ over $A$ and we will be mainly considering properties of $\tp(a/A)$ that translate into first order properties of the structure $(A,a)$. For example, if $\phi(x;y)\in L$, the fact that $\tp_\phi(a/A)$ is definable is a first order property of $(A,a)$ in the sense that if $(A',a)\equiv (A,a)$, then $\tp_\phi(a/A)$ is definable if and only if $\tp_\phi(a/A')$ is definable.

\begin{defi}\label{def_compr}
A type $p(x)=\tp(a/A)$ is \emph{compressible} if given an $|A|^+$-saturated elementary extension $(A,a)\prec (A',a)$, for any formula $\phi(x;y)\in L$, there is some $\zeta(x;e)\in \tp(a/A')$ such that $\zeta(x;e)\vdash \tp_{\phi}(a/A)$.
\end{defi}

Observe that by compactness, this definition is equivalent to the following: for any formula $\phi(x;y)$, there is a formula $\zeta(x;t)$ such that for any finite $A_0\subseteq A$, there is $e\in A$ such that $a\models \zeta(x;e)$ and $\zeta(x;e)\vdash \tp_{\phi}(a/A_0)$.

Recall the notion of honest definitions from \cite{DepPairs} (see also \cite[Chapter 3]{NIPBook}): Given $(A,a)$ and an NIP formula $\phi(x;y)$, there is $(A,a)\prec (A',a)$ and some $\theta_1(y;e)\in L(A')$ such that $\theta_1(A;e)=\phi(a;A)$ and $\theta_1(A';e)\subseteq \phi(a;A')$. We call $\theta_1(y;e)$ an honest definition of $\phi(a;y)$ over $A$. Note that $e$ can be found in any $|A|^+$-saturated extension of $(A,a)$ since the requirements on it are first order expressible over $A$ in that structure.

One easily checks that if $\tp(a/A)$ is compressible and $\zeta(x;e)$ is as in Definition \ref{def_compr}, then the formula \[\theta_1(y;e) \equiv (\forall x) \left [\zeta(x;e) \impl \phi(x;y)\right ]\]
is an honest definition of $\phi(a;y)$ over $A$. In fact, we even have the stronger property $\theta_1(\monster;e) \subseteq \phi(a;\monster)$.

\begin{lemme}\label{lem_equivcompr}
	If $(A,a)\equiv (A',a')$, then $\tp(a/A)$ is compressible if and only if $\tp(a'/A')$ is compressible.
\end{lemme}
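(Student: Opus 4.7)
The plan is to recast compressibility as a schema of first-order sentences satisfied by the structure $(A,a)$, so that elementary equivalence transfers it automatically. To that end, I would first replace Definition~\ref{def_compr} by the equivalent finitary form noted immediately after it: $\tp(a/A)$ is compressible iff for every $\phi(x;y)\in L$ there exists $\zeta(x;t)\in L$ such that, for every finite $A_0\subseteq A$, some $e\in A^{|t|}$ satisfies $a\models \zeta(x;e)$ and $\zeta(x;e)\vdash \tp_\phi(a/A_0)$.

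Next I would check that every ingredient of this reformulation is expressible by a predicate of the structure $(A,a)$. The clause ``$a\models \zeta(x;e)$'' is the predicate attached to the $L(a)$-formula $\zeta(a;t)$; the clause ``$\phi(a;b)$'' is the predicate attached to $\phi(a;y)$; and the clause ``$\zeta(x;e)\vdash\phi(x;b)$'' is equivalent to $\monster\models \forall x(\zeta(x;e)\to\phi(x;b))$, which is a pure $L$-formula in the free variables $(e,b)$ and so also arises as a predicate of $(A,a)$ (it does not involve $a$ at all). The analogous statement holds for $\neg\phi$. Consequently, for each triple $(\phi,\zeta,n)$, the assertion
\[
\forall b_1\ldots b_n\,\exists e\,\Bigl(\zeta(a;e)\wedge\bigwedge_{i\leq n}\bigl[(\phi(a;b_i)\to \forall x(\zeta(x;e)\to \phi(x;b_i)))\wedge(\neg\phi(a;b_i)\to \forall x(\zeta(x;e)\to \neg\phi(x;b_i)))\bigr]\Bigr)
\]
is a single $L(A,a)$-sentence $\Psi^{\phi,\zeta}_n$.

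Thus compressibility of $\tp(a/A)$ is precisely the statement: for every $\phi$ there exists $\zeta$ such that $(A,a)\models \Psi^{\phi,\zeta}_n$ for every $n<\omega$. Since $(A,a)\equiv (A',a')$ satisfies the same $L(A,a)$-sentences, the same $\zeta$ that witnesses compressibility of $\tp(a/A)$ witnesses compressibility of $\tp(a'/A')$, and the converse is symmetric. I don't anticipate a genuine obstacle: the only subtle point is observing that ``$\zeta(x;e)\vdash \phi(x;b)$'' depends only on the parameters $(e,b)$ and on $T$, so that it really is captured by a first-order predicate of $(A,a)$ rather than by some condition that a priori refers to $\monster$ or to $a$.
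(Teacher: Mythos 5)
Your proof is correct and takes essentially the same route as the paper: both recast the finitary form of compressibility as a schema of first-order sentences about the structure $(A,a)$ (using that the purely $L$-definable condition $\forall x(\zeta(x;e)\to\phi(x;b))$ is among its predicates) and then transfer it by elementary equivalence. The only cosmetic difference is that the paper packages the decision of $\phi(x;b)$ into a single disjunctive formula $\theta(y;t)$ and remarks afterwards that the decided value must agree with $\tp_\phi(a/A_0)$ because $a\models\zeta(x;e)$, whereas you build that agreement directly into the sentence via the guards $\phi(a;b_i)$ and $\neg\phi(a;b_i)$.
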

\begin{proof}
Assume that $\tp(a/A)$ is compressible. Fix a formula $\phi(x;y)$ and let $\zeta(x;t)$ be given by compressibility of $\tp(a/A)$. Define also \[\theta(y;t) \equiv \left [ (\forall x) \zeta(x;t) \impl \phi(x;y)\right ] \vee \left [(\forall x) \zeta(x;t)\impl \neg \phi(x;y)\right ].\] By compressibility, for any finite $A_0\subseteq A$, there is $e\in A$ such that $\zeta(a;e)$ holds and $A_0\subseteq \theta(A;e)$. Hence for any integer $m$, 
\[ (A,a)\models (\forall y_0,\ldots,y_{m-1})(\exists t)\left [\zeta(a,t)\wedge \bigwedge_{i<m} \theta(y_i;t)\right ]. \]

Since $(A',a')$ is elementarily equivalent to $(A,a)$, it satisfies all those formulas as $\phi$ varies. This in turns implies that $\tp(a'/A')$ is compressible. (Note that $\theta(y;t)$ says that $\zeta(x;t)$ implies a $\phi$-type over $y$. If both $\zeta(a;e)$ and $\theta(b;e)$ hold, then the $\phi$-type over $b$ implied by $\zeta(x;e)$ has to be that of $a$ since $a\models \zeta(x;e)$.)
%
\end{proof}

The following was implicit in the proof of \cite[Proposition 19]{ExtDef2}. Recall that two types $p(x)$ and $q(y)$ over the same set $A$ are \emph{weakly orthogonal} if $p(x)\cup q(y)$ implies a complete type over $A$.

\begin{lemme}\label{lem_equivalentcompr}
Let $p(x) = \tp(a/A)$ be any type and take $(A,a)\prec (A',a)$, $|A|^+$-saturated. Then the following are equivalent:
\begin{enumerate}
	\item $p$ is compressible;
	\item  $\tp(a/A')$ is weakly orthogonal to all types $q(y) \in S(A')$ finitely satisfiable in $A$;
	\item for any $q(y) \in S(A')$ finitely satisfiable in $A$, $\tp_x(a/A')\cup q(y)$ implies a complete type  in variables $x\hat{~}y$ over $\emptyset$.
\end{enumerate}
\end{lemme}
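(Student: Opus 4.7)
The plan is to prove the cycle $(1)\Rightarrow(2)\Rightarrow(3)\Rightarrow(1)$. The middle step is trivial: since $L\subseteq L(A')$, any partial type complete over $A'$ is automatically complete over $\emptyset$.

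For $(3)\Rightarrow(1)$, I would argue contrapositively. Suppose compressibility fails for some $\phi\in L$. Then for every finite $Z\subseteq \tp(a/A')$ the ``failure-witnessing'' formula
\[
\xi_Z(y)\;:=\;\exists x_1\,\exists x_2\,\bigl(Z(x_1)\wedge Z(x_2)\wedge \phi(x_1;y)\wedge \neg\phi(x_2;y)\bigr)
\]
must be satisfied by some $b_0\in A$, for otherwise $\bigwedge Z$ itself would imply $\tp_\phi(a/A)$. Hence $\{\xi_Z\}$ is finitely satisfiable in $A$; extending to a global type $\tilde q$ finitely satisfiable in $A$ and putting $q:=\tilde q|_{A'}$, a realization $b\models \tilde q|A'a$ satisfies every $\xi_Z$, so by compactness inside $\monster$ I obtain $a_1,a_2\models \tp(a/A')$ with $\phi(a_1;b)\wedge\neg\phi(a_2;b)$. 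The pairs $(a_1,b)$ and $(a_2,b)$ both realize $\tp(a/A')\cup q$ while their $L$-types in $(x,y)$ disagree on $\phi$, contradicting (3).

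For $(1)\Rightarrow(2)$, fix $q\in S(A')$ finitely satisfiable in $A$ and an arbitrary $\phi(x,y)=\phi_0(x;y,d)\in L(A')$ with $\phi_0\in L$ and $d\in A'$. The key idea is to apply compressibility to $\phi_0$ viewed as a three-variable $L$-formula, but with the parameter set enlarged from $A$ to $A\cup\{d\}$. Compressibility of $\tp(a/A)$ is a first-order scheme in the structure $(A,a)$, so by elementarity $(A,a)\prec(A',a)$ the same scheme holds in $(A',a)$ for every finite tuple of parameters in $A'$; then $|A|^+$-saturation of $(A',a)$ realizes the partial type in $t$ of size $|A|$ asserting that a single $\zeta(x;t)$ satisfies $\zeta(a;t)$ and decides $\phi_0(x;b_0,c_0)$ correctly at every $(b_0,c_0)\in A\cup\{d\}$. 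This yields some $e\in A'$ with $\zeta(x;e)\in \tp(a/A')$ and $\zeta(x;e)\vdash \phi_0(x;b_0,d)^{\phi_0(a;b_0,d)}$ for every $b_0\in A$. Setting
\[
\mu(y):=(\forall x)(\zeta(x;e)\to \phi_0(x;y,d)),\qquad \nu(y):=(\forall x)(\zeta(x;e)\to \neg\phi_0(x;y,d)),
\]
both in $L(A')$, the disjunction $\mu\vee\nu$ holds throughout $A$; finite satisfiability of $q$ in $A$ forces $\mu\vee\nu\in q$, and completeness of $q$ selects exactly one of $\mu,\nu$. Since $\models\zeta(a;e)$, this choice uniformly pins down the truth of $\phi_0(a;b,d)$ across all $b\models q$, establishing weak orthogonality.

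The main obstacle is precisely the $A'$-parameter $d$ appearing in $(1)\Rightarrow(2)$: compressibility of $\tp(a/A)$ alone provides decisions only for parameters drawn from $A$, so the nontrivial move is the elementarity-plus-saturation step, which enlarges the set of ``decided'' parameters to include $d$ and is the point at which the hypothesis that $(A',a)$ be $|A|^+$-saturated is used.
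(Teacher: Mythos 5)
Your argument is correct, and it differs from the paper's mainly in how the two nontrivial directions are organized. For $(1)\Rightarrow(2)$, the paper never decides instances with parameters $d\in A'\setminus A$ directly: it first proves the parameter-free statement (3) via the formula $\theta(y;e)$, then, given a tuple $b\in A'$, picks an intermediate $(A,a)\prec(A'',a)\prec(A',a)$ with $b\in A''$ and $|A''|=|A|$, uses Lemma \ref{lem_equivcompr} to see that $\tp(a/A'')$ is still compressible, and absorbs $b$ by replacing $q(y)$ with $q(y)\cup\{z=b\}$, which is finitely satisfiable in $A''$. You instead transfer the compressibility scheme for a fixed $\zeta$ from $(A,a)$ to $(A',a)$ by elementarity and use $|A|^+$-saturation to get a single $e\in A'$ with $\zeta(x;e)\in\tp(a/A')$ deciding $\phi_0(x;b,d)$ for all $b\in A$; after that, your $\mu\vee\nu$ step is exactly the paper's $\theta(y;e)$ trick. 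Your route avoids Lemma \ref{lem_equivcompr} and the dummy-variable type, at the price of leaning on the convention (also implicit in the paper's proof of Lemma \ref{lem_equivcompr}) that $(A,a)$ carries predicates for traces of sets defined by $L$-formulas whose quantifiers range over $\monster$, so that ``$\zeta(x;t)$ decides $\phi_0(x;y,z)$'' is atomic in the pair language; you should also say explicitly that non-compressibility can be witnessed over the given $A'$, which follows from the compactness reformulation stated right after Definition \ref{def_compr}. For the converse, the paper proves $(2)\Rightarrow(1)$ directly by covering the compact set $S\subseteq S_y(A')$ of types finitely satisfiable in $A$ with finitely many $\theta_q$ and setting $\zeta=\bigwedge_{q}\zeta_q$ (an argument that in fact only uses (3)), while you prove $(3)\Rightarrow(1)$ contrapositively by extending your splitting formulas $\xi_Z$ to a type finitely satisfiable in $A$; these are dual forms of the same compactness argument, and both are fine.
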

\begin{proof}
Assume first that $p(x)$ is compressible. Let $q(y)\in S(A')$ be finitely satisfiable in $A$ and let $\phi(x;y)\in L$. Let $\zeta(x;e)$ be given by the definition of compressibility. Consider the formula $$\theta(y;e) \equiv \left [ (\forall x) \zeta(x;e) \impl \phi(x;y)\right ] \vee \left [(\forall x) \zeta(x;e)\impl \neg \phi(x;y)\right ].$$
By assumption, every $b\in A$ satisfies $\theta(y;e)$. By finite satisfiability of $q$, we have $q(y)\vdash \theta(y;e)$. Thus for some $\epsilon \in \{0,1\}$, $\zeta(x;e)\wedge q(y) \vdash \phi(x;y)^{\epsilon}$ and in particular $p(x)\wedge q(y)\vdash \phi(x;y)^{\epsilon}$. This shows that (3) holds.

To see that (2) also holds, take $b\in A'$ a finite tuple. There is $A\subseteq A'' \subseteq A'$ such that $b\in A''$, $(A,a)\prec (A'',a)\prec (A',a)$ and $|A''|=|A|$. By Lemma \ref{lem_equivcompr}, $\tp(a/A'')$ is also compressible. Hence everything done for $A$ also applies for $A''$. Consider the type $q'(y\hat{~}z) = q(y)\cup \{z=b\}$. Then $q'$ is finitely satisfiable in $A''$. By the previous paragraph, $\tp_x(a/A')\cup q'(y)$ implies a complete type over $\emptyset$. As $b\in A'$ was arbitrary, this implies that $\tp(a/A')$ and $q(y)$ are weakly orthogonal.

Assume now that that $p'(x):=\tp(a/A')$ is weakly orthogonal to every $q(y)\in S(A')$ finitely satisfiable in $A$ and take some formula $\phi(x;y)$. Let $S\subseteq S_y(A')$ be the set of types finitely satisfiable in $A$. It is a closed subset of $S_y(A')$ and thus compact. For each $q\in S$, for some $\epsilon_q \in \{0,1\}$, we have $p'(x)\wedge q(y) \vdash \phi(x;y)^{\epsilon_q}$. By compactness, there are formulas $\zeta_q(x)\in p'(x)$ and $\theta_q(y)\in q$ such that already $\zeta_q(x) \wedge \theta_q(y) \vdash \phi(x;y)^{\epsilon_q}$. Let $T\subseteq S$ finite such that the family $\{\theta_q(y):q\in T\}$ covers $S$. Define $\zeta(x) = \bigwedge_{q\in T} \zeta_q(x)$. Observing that any $b\in A$ satisfies $\bigvee_{q\in T} \theta_q(y)$, one sees that $\zeta(x)\vdash \tp_{\phi}(a/A)$.
\end{proof}

\section{Decomposition}\label{sec_main}

We now come to the main theorem of this paper. Intuitively, it says that if $T$ is NIP, and $p(x)=\tp(a/A)$ is any type, then there is a generically stable partial type $\pi(x)$ contained in $p$ and such that $p$ is compressible \emph{up to} $\pi$.

\begin{thm}\label{th_main}($T$ is NIP.)
Let $p(x)=\tp(a/A)$ be any type. Then there is $\pi(x)$ generically stable over $A$ with $\pi(x)|_A \subseteq p(x)$, such that if $(A,a)\prec (A',a)$ is $|A|^+$-saturated and $q(y)$ is a global type finitely satisfiable in $A$, then $\tp_x(a/A')\cup (\pi\otimes q)|_{A'}(x,y)$ implies the complete type $(q\otimes p)(y,x)|_A$.
\end{thm}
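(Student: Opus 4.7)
\emph{Overall strategy.} My plan is to build $\pi$ formula-by-formula out of the domination theory from Section~\ref{sec_prel}, verify generic stability by reducing to the symmetry criterion of Proposition~\ref{prop_gssym}, and finally derive the stated compressibility of $p$ modulo $\pi$ from the external-parameters version of strong domination (Fact~\ref{prop_extstablebase}). The construction should mirror Proposition~\ref{prop_gsquant}: each local piece of $\pi$ will be a ``stable existential'' of the form $(\exists \bar y)(\alpha(\bar y) \wedge \rho(x, \bar y))$, where $\alpha$ is the type of a strongly dominating Morley sequence and $\rho$ is an $L(A)$-relation coming from shrinking of indiscernibles.

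\emph{Construction of $\pi$.} Fix a formula $\phi(x;y)$ and a global type $q(y)$ finitely satisfiable in $A$. Take a long dense Morley sequence of $q$ over $A$ in which $a$ can be ``placed'' (by realizing $q$ over $Aa$ to get a Morley sequence $\bar b$), and apply Proposition~\ref{prop_existssbase} to obtain an auxiliary dense sequence $\bar a_*$ filling a Dedekind cut that strongly dominates $\bar b$ over the enlarged base. Shrinking of indiscernibles (Proposition~\ref{shrinking1}) applied to $\phi$ on $\bar a_* + \bar b$ produces finitely many $L(A)$-schemes $(\phi_i(x;\bar y, z), d\phi_i(\bar y, z))$ such that $\phi_i(a; \bar y, z) \in \pi$ for the relevant parameters. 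I adjoin these to a partial type $\pi_{\phi,q}$, then let $\pi$ be the union over all $\phi$ and (representatives of) $q$. Consistency of $\pi$ is immediate because every formula in $\pi$ is satisfied by $a$ itself; the Ind-definability comes from expressing each $\pi_{\phi,q}$ as an ``existential over a generically stable type'' as in Lemma~\ref{lem_def} and Proposition~\ref{prop_gsquant}.

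\emph{Generic stability.} By Proposition~\ref{prop_gssym} it suffices to verify (Sym). Given an indiscernible sequence $(a_k)_{k<\omega}$ with $a_k \models \pi | Aa_{<k}$, I would argue directly as in the second half of Proposition~\ref{prop_gsquant}: at each step, we can pick a generically stable witness $\bar y_k$ for the existential definition of $\pi$ applied to $a_k$, reverse the sequence using the fact that generically stable witnesses are symmetric, and conclude that also $a_k \models \pi|Aa_{\neq k}$. A subtlety is that my $\pi$ is a union of many local pieces $\pi_{\phi, q}$, so I would use the remarks following Lemma~\ref{lem_reducepi} together with a direct check that each finite subfamily of pieces can be captured by a single stable existential (which suffices by compactness).

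\emph{Compressibility modulo $\pi$.} Assume $(A,a)\prec(A',a)$ is $|A|^+$-saturated, $q(y)$ is finitely satisfiable in $A$, and $(a^*, b^*)$ realizes $\tp_x(a/A')\cup(\pi\otimes q)|_{A'}$. I need to show $\tp(b^*/Aa^*)=q|_{Aa^*}$; suppose not, witnessed by some $\psi(y; a^*, c)$ with $c\in A$. The idea is to run the construction of $\pi_{\psi, q}$ once more, where now the role of $a$ is played by $a^*$: choose a Morley sequence $\bar b^\circ$ of $q$ over $A'a^*$ and the corresponding strongly dominating $\bar a_*^\circ$ (whose type sits inside $\pi|A'$). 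Since $a^* \models \pi | A' b^*$, the parameter $d := b^*c$ is compatible with the indiscernibility assumptions of Fact~\ref{prop_extstablebase}: the surrounding sequences remain $Ad$-indiscernible. Applying Fact~\ref{prop_extstablebase} then forces $\bar b^\circ$ (and thus $b^*$ itself, by choosing the setup so that $b^*$ appears inside $\bar b^\circ$ or by adding $b^*$ to the base) to be a Morley sequence of $q$ over $Aa^*d$, contradicting $\neg\psi(b^*; a^*, c)$. The main obstacle is precisely this last step: ensuring that $b^*$ can legitimately play the role of a Morley element so that Fact~\ref{prop_extstablebase} applies. This requires arranging the construction of $\pi_{\phi, q}$ carefully enough that the ``external parameter'' slot in Fact~\ref{prop_extstablebase} can absorb an arbitrary realization of $q|_{A'}$, not just a canonical Morley element --- which I expect is exactly where the statement ``implies the complete type $(q\otimes p)(y,x)|_A$'' (as opposed to merely ``$a^*\models p$'') gets forced by the construction.
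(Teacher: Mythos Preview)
Your proposal has the right cast of characters (strong domination, Fact~\ref{prop_extstablebase}, the existential shape suggested by Proposition~\ref{prop_gsquant}), but it is missing the two load-bearing steps of the argument, and without them neither the generic-stability nor the domination part goes through.

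The first missing piece is a \emph{maximality} construction for $\bar a_*$. For each $q$, the paper does not simply apply Proposition~\ref{prop_existssbase} once; it first finds a sequence $\bar a_*$ such that there is a Morley sequence $I$ of $q$ over $Aa$ with $I+\bar a_*$ Morley over $A$, and such that whenever $\bar a_* \unlhd_A \bar a'$ with $\bar a'$ again of this shape, then already $\bar a_* \unlhd_{Aa} \bar a'$. This is obtained by a $\textsf T(\cdot,\phi)$-increasing iteration and yields the key property $\boxtimes_1$: any compatible extension $I+\bar a_* \unlhd_A I+I_0+\bar a_*+I_1$ is automatically compatible over $Aa$. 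The type $\pi_q(x)$ is then $p(x)$ together with ``$(\exists \bar a'_*)\,\tp(x,\bar a'_*/A)=\tp(a,\bar a_*/A)$ and $\bar a'_*$ is indiscernible over $\monster$''. Your formula-by-formula scheme via shrinking of indiscernibles gives no such maximality, and there is no reason for $\boxtimes_1$ to hold for an arbitrary strongly dominating $\bar a_*$.

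The second missing piece is the main technical lemma: if $a'\models\pi_q|B$, then there is $\bar b'_*\models\Mor(q)|B$ with $\tp(a',\bar b'_*/A)=\tp(a,\bar a_*/A)$. This is where Fact~\ref{prop_extstablebase} is actually used (with $d=B$): one takes the $B$-indiscernible witness $\bar a'_*$ from the definition of $\pi_q$, expands it over $Ba'$, invokes $\boxtimes_1$ to match the $A$-type of the expanded picture with that of the original strongly-dominating configuration, and then applies Fact~\ref{prop_extstablebase}. Both remaining steps follow from this lemma, not from the routes you sketch. Your generic-stability argument via Propositions~\ref{prop_gssym} and~\ref{prop_gsquant} fails because the inner witness type (``$\bar y$ is a strongly dominating Morley sequence of $q$'', or ``$\bar y$ is $\monster$-indiscernible'') is not known to be generically stable, so there is no symmetry to invoke; the paper instead proves (GS) directly by using the lemma to concatenate Morley sequences $\bar a_*^i$ of $q$ along a long $\pi_q$-Morley sequence $(a_i)$ and then applies shrinking of indiscernibles. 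Your domination argument is circular: you build a strongly dominating $\bar a_*^\circ$ before seeing $b^*$, then claim that $a^*\models\pi|A'b^*$ makes $d=b^*c$ satisfy the indiscernibility hypotheses of Fact~\ref{prop_extstablebase}. But $a^*\models\pi|A'b^*$ only produces a \emph{different} witness, indiscernible over $A'b^*$, unrelated to $\bar a_*^\circ$; the bridge between the two is precisely the maximality $\boxtimes_1$ together with the technical lemma, which then forces $I+b$ to be $Aa$-indiscernible.
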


Note that as $\pi$ is Ind-definable, we automatically have $\pi(x)|_{A'} \subseteq p'(x)$. Also, if one prefers to think about the dual ideal $I_\pi$ rather than the partial type $\pi$, then the conclusion can be rephrased by saying that any two $I_\pi$-wide extensions of $p'(x)$ to a realization $b$ of $q(y)$ have the same restriction to $Ab$.

\medskip
We proceed with the proof.

Let $p(x) = \tp(a/A)$. Let $q(y)$ be a global $A$-finitely satisfiable type which will be fixed for most of the proof. We will write $\bar a\models \Mor(q)|A$ to mean that $\bar a$ is a Morley sequence of $q$ over $A$. The indiscernible sequences we consider will always be implicitly assumed to be indexed by a dense order without endpoints.

Let $\Omega$ be the class of types $\tp(\bar a/Aa)$, where $\bar a=(a_i:i\in \mathcal I)$ is an indiscernible sequence and there is a Morley sequence $I$ of $q$ over $Aa$ such that $I+\bar a$ is $A$-indiscernible (hence is a Morley sequence of $q$ over $A$).

\begin{lemme}
There is $s(\bar x)\in \Omega$ such that if $\bar a\models s$ and $\bar a \unlhd_{A} \bar a'$ with $\tp(\bar a'/Aa)\in \Omega$, then $\bar a \unlhd_{Aa} \bar a'$.
\end{lemme}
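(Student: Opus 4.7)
The plan is to take $s$ to be the type over $Aa$ of a sequence $\bar a\in\Omega$ that simultaneously maximizes every $L(Aa)$-alternation number. For each formula $\phi(y_0,\ldots,y_{n-1})\in L(Aa)$, define
\[
N_\phi \;:=\; \max\bigl\{\textsf T(\bar a,\phi):\tp(\bar a/Aa)\in\Omega\bigr\},
\]
which is a well-defined finite integer by the uniform bound on $\textsf T(-,\phi)$ that depends only on the underlying $L$-formula of $\phi$ (the remark after Proposition~\ref{shrinking1}). I seek $\bar a$ with $\tp(\bar a/Aa)\in\Omega$ and $\textsf T(\bar a,\phi)=N_\phi$ for \emph{every} $\phi\in L(Aa)$; setting $s=\tp(\bar a/Aa)$ then finishes the lemma, because if $\bar a\unlhd_A\bar a'$ with $\tp(\bar a'/Aa)\in\Omega$, then $N_\phi = \textsf T(\bar a,\phi) \leq \textsf T(\bar a',\phi) \leq N_\phi$ by maximality, so equality holds for every $\phi\in L(Aa)$, i.e., $\bar a\unlhd_{Aa}\bar a'$.

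To produce such $\bar a$, I equip $\Omega$, viewed as a subset of $S_{\bar x}(Aa)$, with the componentwise partial order $t\leq t'$ iff $\textsf T(t,\phi)\leq\textsf T(t',\phi)$ for every $\phi\in L(Aa)$, and apply Zorn's lemma. The set $\Omega$ is closed in $S_{\bar x}(Aa)$ (it is the image under projection of the type-definable set of pairs $(I,\bar a)$ with $I+\bar a$ being $A$-indiscernible and $I$ a Morley sequence of $q$ over $Aa$, and projections of closed sets in a compact type space remain closed), and each $\{t:\textsf T(t,\phi)\geq n\}$ is closed. For any chain $(t_\alpha)$ in $(\Omega,\leq)$, each $\textsf T(t_\alpha,\phi)$ is a bounded nondecreasing integer sequence and hence stabilizes at some value $n_\phi\leq N_\phi$; by compactness, the closed conditions $\{\textsf T(\cdot,\phi)\geq n_\phi\}$ together with membership in $\Omega$ are jointly consistent (every finite subcollection is witnessed by a sufficiently late $t_\alpha$), so an upper bound $t^\sharp\in\Omega$ exists. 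By Zorn's lemma, $(\Omega,\leq)$ has a maximal element $s^\star$.

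The remaining and main obstacle is to show $s^\star$ realizes every $N_\phi$. If some $\phi^\star$ satisfied $\textsf T(s^\star,\phi^\star)<N_{\phi^\star}$, then any $s^{\star\star}\in\Omega$ attaining $N_{\phi^\star}$ would contradict the maximality of $s^\star$ provided one can combine $s^\star$ and $s^{\star\star}$ into $s'\in\Omega$ dominating both in the componentwise order. To build such $s'$, I would take realizations $\bar a,\bar b$ of $s^\star,s^{\star\star}$ with their respective witnesses $I_a,I_b$, extend $I_a+\bar a$ by a long Morley sequence of $q$ over $A(I_a+\bar a)$, and append an $Aa$-elementary copy of $\bar b$ as an $A$-indiscernible continuation, using the saturation of $\monster$ together with the freedom in choosing Morley extensions and automorphisms fixing $Aa$. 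The resulting sequence lies in $\Omega$ (witnessed by $I_a$) and contains $Aa$-elementary copies of both $\bar a$ and $\bar b$ as subsequences, giving the required domination of alternation numbers. The delicate point, and the bulk of the combinatorial work, is arranging this placement so that the appended copy's $Aa$-type genuinely matches $\tp(\bar b/Aa)$ despite the additional constraint of being an $A$-indiscernible continuation of a specific prefix.
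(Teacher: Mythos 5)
There is a genuine gap, and it sits exactly where you say "the delicate point, and the bulk of the combinatorial work" lies: your argument needs $(\Omega,\leq)$ to be directed, i.e.\ that any two types $s^\star,s^{\star\star}\in\Omega$ admit a common $s'\in\Omega$ with $\textsf T(s',\phi)\geq\max(\textsf T(s^\star,\phi),\textsf T(s^{\star\star},\phi))$ for all $\phi\in L(Aa)$, and this is never proved. The natural attempt you sketch (take $\bar b'$ with $\tp(\bar b'/Aa)=\tp(\bar b/Aa)$ placed after $\bar a$ plus a Morley continuation) fails at precisely the constraint that matters: you can make the tail a Morley sequence of $q$ over everything, or you can make its $Aa$-type equal to $\tp(\bar b/Aa)$, but getting both while keeping the whole concatenation $A$-indiscernible is exactly the kind of interaction between $a$ and the sequence that the lemma is about, and nothing in the definition of $\Omega$ guarantees it. The paper's proof is structured so as never to need such an amalgamation: it runs a transfinite improvement process in which, whenever the desired property fails for the current $\bar a_\alpha$, the witness of failure $\bar a_{\alpha+1}$ is itself taken as the next stage; since $\bar a_\alpha\subseteq\bar a_{\alpha+1}$ literally and $\tp(\bar a_{\alpha+1}/Aa)\in\Omega$ by hypothesis, the counts $\textsf T(\cdot,\phi)$ are monotone for free and strictly increase at some $\phi$, so the uniform finite bounds stop the process after fewer than $(|A|+|T|)^+$ steps. (At limit stages the paper takes an ultrafilter limit of the pairs (Morley witness, sequence) to stay inside $\Omega$.)

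A second, more local problem: your claim that $\{t:\textsf T(t,\phi)\geq n\}$ is closed is false. Having at least $n$ alternation classes is witnessed by an existential choice of index tuples, hence is an \emph{open} condition on the type, so the finite-intersection/compactness argument for an upper bound of a chain does not go through as written. Nor can you fix this by taking an ultrafilter limit of realizations over a fixed index order: the boundaries of the $\sim_\phi$-classes can drift into a cut along the chain, and the limit sequence may then have strictly smaller $\textsf T(\cdot,\phi)$. This is another reason the paper insists that successive sequences literally contain the earlier ones (the index set grows, so alternation can never be lost) and takes limits of the nested types rather than limits in a fixed Stone space. Your reduction in the first paragraph (a simultaneous maximizer would yield the lemma) is correct, but both the existence of upper bounds for chains and the passage from a Zorn-maximal element to a simultaneous maximizer remain unproved, and the latter appears to require essentially the full strength of the paper's argument.
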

\begin{proof}
The proof is similar to that of Proposition \ref{prop_existssbase}: if some $\bar a$ does not have the required property, we increase it introducing some additional alternation over $Aa$ and iterate. So start with any $s_0 \in \Omega$ and $\bar a_0 \models s_0$. We try to build by induction an increasing sequence of types $s_i \in \Omega$, $i<(|A|+|T|)^+$ (where increasing implies that the variables of $s_i$ are included in that of $s_j$, $i<j$) as follows:

At a limit stage $\lambda$, set $s_\lambda = \bigcup_{\alpha<\lambda} s_\alpha$. For each $\alpha<\lambda$, let $\bar a_\alpha \models s_\alpha$ and let $I_\alpha$ be a Morley sequence of $q$ over $Aa$ such that $I_\alpha +\bar a_\alpha$ is $A$-indiscernible. Pick an ultrafilter $\mathfrak F$ on $\lambda$ extending the cofinal filter. Let $(I_\lambda,\bar a_\lambda)$ realize the limit of $(\tp(I_\alpha,\bar a_\alpha/Aa):\alpha<\lambda)$ along $\mathfrak F$. Then $\bar a_\lambda$ realizes $s_\lambda$, $I_\lambda$ is a Morley sequence of $q$ over $A$ and $I_\lambda+\bar a_\lambda$ is $A$-indiscernible. Hence $s_\lambda \in \Omega$. Note also that for each $\phi$, $\textsf T(\bar a_\lambda,\phi) = \sup_{\alpha<\lambda} \textsf T(\bar a_\alpha,\phi)$. (Recall the definition of $\textsf T$ given in Section \ref{sec_prel}.)

Assume that $s_\alpha$ has been built and let $\bar a_\alpha \models s_\alpha$. If $s_\alpha$ has the required property, we are done. Otherwise, there is some $\bar a_\alpha \unlhd_{A} \bar a_{\alpha+1}$ such that $\tp(\bar a_{\alpha+1}/Aa)\in \Omega$ and $\bar a_\alpha \ntrianglelefteq_{Aa} \bar a_{\alpha+1}$. This implies that for some formula $\phi\in L(Aa)$, we have $\textsf T(\bar a_{\alpha+1},\phi)>\textsf T(\bar a_\alpha,\phi)$. Set $s_{\alpha+1}=\tp(\bar a_{\alpha+1}/Aa)$.

As the numbers $\textsf T(\cdot, \phi)$ must remain finite, this construction ends after less than $(|A|+|T|)^+$ steps.
\end{proof}

Pick some $s\in \Omega$ given by the lemma and let $\bar a_*$ realize it. One can think of $\bar a_*$ as encoding the \emph{$q$-stable part} of $a$. Let also $I$ be a Morley sequence of $q$ over $Aa$ (indexed by $\mathbb Q$) such that $I+\bar a_*$ is $A$-indiscernible. We show the following properties:

\begin{itemize}
\item[$\boxtimes_0$] $I \models \Mor(q)|Aa$ and $I+\bar a_*\models \Mor(q)|A$.

\item[$\boxtimes_1$] Whenever $I+\bar a_* \unlhd_{A} I+I_0+\bar a_* +I_1$, then $\bar a_* \unlhd_{Aa} I+I_0+\bar a_*+I_1$, $I_0+I_1 \models \Mor(q)|AIa$ and $I_1 \models \Mor(q)|AaII_0\bar a_*$.

\end{itemize}
Property $\boxtimes_0$ is immediate from the construction. We show property $\boxtimes_1$. Let $I+\bar a_* \unlhd_{A} I+I_0+\bar a_* +I_1$. Cut $I$ into two infinite pieces as $I=J_0+J_1$ and notice that $\tp(J_1+I_0+\bar a_*+I_1/Aa)\in \Omega$ as witnessed by $J_0$. We also have $\bar a_* \unlhd_A J_1+I_0+\bar a_*+I_1$ since both those sequences are indiscernible over $A$. Hence by the choice of $\bar a_*$, we have $\bar a_* \unlhd_{Aa} J_1+I_0+\bar a_*+I_1$. Since $J_1$ can be an arbitrarily large subset of $I$, we have $\bar a_*\unlhd_{Aa} I+I_0+\bar a_*+I_1$.

For the second point, let $\op(I_{-1})$ realize $\lim(I)$ over everything and $I_2$ realize $\Mor(q)$ over everything, including $I_{-1}$. Then $I+I_{-1}+I_0+\bar a_*+I_1+I_2$ is $A$-indiscernible, or in other words $I+\bar a_* \unlhd_{A} I+I_{-1}+I_0+\bar a_*+I_1+I_2$. We therefore have $\bar a_* \unlhd_{Aa} I+I_{-1}+I_0+\bar a_*+I_1+I_2$ by the previous paragraph, which implies that $I_{-1}+I_0$ and $I_1+I_2$ are mutually indiscernible over $AIa\bar a_*$. Since $I_{-1}+I_2$ is a Morley sequence of $q$ over $AIa$, so is $I_0+I_1$ and similarly since $I_2$ is a Morley sequence of $q$ over $AaII_0 \bar a_*$, so is $I_1$.


\smallskip
Let $\pi(x)=\pi_{q}(x)$ be the partial type \[p(x) \cup (\exists \bar a'_*)(\tp(x\bar a'_*/A)=\tp(a\bar a_*/A) ~\&~ \bar a'_* \text{ is indiscernible over }\monster).\]

\begin{lemme}\label{lem_b}
If $B$ is any set of parameters containing $A$, then $\pi(x)|_B$ is: 
\[p(x) \cup (\exists \bar a'_*)(\tp(x\bar a'_*/A)=\tp(a\bar a_*/A) ~\&~ \bar a'_*\text{ is indiscernible over }B).\]
\end{lemme}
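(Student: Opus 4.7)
The plan is to prove the lemma by showing, for any $c\in\monster$, the equivalence of (i) $c\models \pi(x)|_B$ and (ii) $c\models p$ and there exists $\bar a'_*$ indiscernible over $B$ with $\tp(c\bar a'_*/A)=\tp(a\bar a_*/A)$. The direction (i)$\Rightarrow$(ii) is handled by a $B$-automorphism, while (ii)$\Rightarrow$(i) requires a Ramsey-style stretching of $\bar a'_*$ in a saturated elementary extension.

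For (i)$\Rightarrow$(ii): the condition $c\models\pi(x)|_B$ is equivalent to the consistency of $\tp(c/B)\cup\pi(x)$, so in a sufficiently saturated elementary extension $\monster^*\succ\monster$ there is $c'\equiv_B c$ with $c'\models\pi$, witnessed by some $\bar a''_*$ indiscernible over $\monster$ satisfying $\tp(c'\bar a''_*/A)=\tp(a\bar a_*/A)$. Pick a $B$-automorphism $\sigma$ of $\monster^*$ with $\sigma(c')=c$, and put $\bar a'_*:=\sigma(\bar a''_*)$. Then $\bar a'_*$ is indiscernible over $\sigma(\monster)\supseteq B$, hence over $B$, and $\tp(c\bar a'_*/A)=\tp(c'\bar a''_*/A)=\tp(a\bar a_*/A)$ since $\sigma$ fixes $A$. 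Also $c\models p$ trivially.

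For (ii)$\Rightarrow$(i): given witnesses $c,\bar a'_*$, it suffices to produce $c'\equiv_B c$ in an elementary extension $\monster^\sharp\succ\monster$ with $c'\models\pi$, because then any $\phi(x,b)\in L(B)\cap\pi$ is satisfied by $c'$ and hence by $c$. I would proceed in two steps. First, using the pair-structure technique from Section~\ref{sec_prel} (form $(\monster,\bar a'_*)$ with $\bar a'_*$ named by a predicate and take a sufficiently saturated elementary extension), construct in $\monster^\sharp$ a sequence $\bar a''_*=(a''_i)_{i\in\mathcal I}$ of the same order type as $\bar a_*$, indiscernible over $\monster$, and sharing the EM-type of $\bar a'_*$ over $B$. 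Second, find $c'\in\monster^\sharp$ with $c'\equiv_B c$ and $\tp(c'\bar a''_*/A)=\tp(a\bar a_*/A)$: the corresponding partial type in the variable $y$ is finitely consistent, since for any finite subtuple of $\bar a''_*$ there is a $B$-automorphism $\tau$ of $\monster^\sharp$ carrying the corresponding tuple of $\bar a'_*$ onto it (existing because the two tuples have the same type over $B$), whence $\tau(c)$ realizes the finite piece. Saturation then yields $c'$, which satisfies $\pi$ via the witness $\bar a''_*$.

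The main obstacle is preserving the joint $A$-type $\tp(c\bar a'_*/A)=\tp(a\bar a_*/A)$ when upgrading the indiscernibility of the sequence from $B$ to $\monster$. Since $\bar a_*$ is only $A$-indiscernible (not $Aa$-indiscernible in general, given its construction via $\Omega$), this joint type can be position-dependent, so a direct Ramsey extraction over $Ac$ would collapse the position dependence and fail to preserve it. The two-step decoupling—first upgrading the sequence's indiscernibility while keeping only the EM-type over $B$, then choosing $c'$ to match via a $B$-automorphism of $\monster^\sharp$—is exactly what makes the argument go through.
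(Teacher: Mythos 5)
Your proof is correct and follows essentially the same route as the paper's: build a copy $\bar a''_*$ of the witness that is indiscernible over $\monster$ while keeping its (EM-)type over $B$, and then re-choose a realization of $\tp(c/B)$ matched to this new witness, exactly the two-step decoupling the paper uses. The only quibble is your parenthetical: a saturated extension of the pair $(\monster,\bar a'_*)$ by itself only provides $B$-indiscernible extensions of the sequence, so the $\monster$-indiscernible copy with the same EM-type over $B$ should be obtained by the standard Ramsey/compactness extraction (or via a Morley sequence of a limit type of $\bar a'_*$ over $\monster$), as the paper does; after that, your finite-approximation-plus-saturation argument for finding $c'$ is a harmless variant of the paper's single automorphism over $B$ sending $\bar a'_*$ to $\bar a''_*$.
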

\begin{proof}
It is clear that this partial type is included in $\pi|_B$. Conversely, assume that $\tp(a' \bar a'_*/A)=\tp(a\bar a_*/A)$ and $\bar a'_*$ is indiscernible over $B$. We have to show that $a'\models \pi|_B$, {\it i.e.}, that $\tp(a'/B)$ has a global extension which satisfies $\pi$. Work in a larger monster model $\monster' \succ \monster$. By Ramsey and compactness, build an indiscernible sequence $\bar a''_*$ over $\monster$ having the same type as $\bar a'_*$ over $B$. Then we can find $a''$ such that $\tp(a''\bar a''_*/B)=\tp(a'\bar a'_*/B)$. This shows that $a''\models \pi|\monster$ and hence $a'\models \pi|B$.
\end{proof}

In particular, taking $B=A$, this shows that $\pi(x)$ is consistent.

For later purposes, let us note now that if we have any number of types of the form above, with the same $a$ and different $\bar a_*$, then their conjunction is also consistent. In fact, we can concatenate all the $\bar a_*$'s into one indiscernible sequence of possibly infinite tuples and apply the same argument.

\begin{lemme}\label{lem_afterrem}
The partial type $\pi(x)$ is Ind-definable over $A$.
\end{lemme}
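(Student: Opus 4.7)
The plan is to invoke Lemma \ref{lem_def} and reduce to showing that the set
\[ X = \{(a',\bar b) : \bar b\in\monster^{\omega},\ a' \models \pi|A\bar b\} \]
is type-definable over $A$. By Lemma \ref{lem_b}, the condition $a'\models \pi|A\bar b$ is equivalent to the existence of some sequence $\bar a'_*$ (indexed by the same order as the fixed $\bar a_*$) satisfying the two conditions $\tp(a'\bar a'_*/A)=\tp(a\bar a_*/A)$ and ``$\bar a'_*$ is indiscernible over $A\bar b$''. So $X$ is the projection of the set of triples $(a',\bar a'_*,\bar b)$ satisfying these two conditions, onto the $(a',\bar b)$-coordinates.

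I would next verify that the set of such triples is type-definable over $A$ in the variables $(x,\bar y,\bar z)$ (where $\bar y$ is the variable corresponding to $\bar a'_*$ and $\bar z$ to $\bar b$). The first condition is type-defined over $A$ in $(x,\bar y)$ by the complete type $\tp(a\bar a_*/A)$. The second condition is type-defined over $A$ in $(\bar y,\bar z)$: indiscernibility is a conjunction of formulas of the form
\[ \psi(\bar y_{i_1},\ldots,\bar y_{i_n};\bar z')\leftrightarrow \psi(\bar y_{j_1},\ldots,\bar y_{j_n};\bar z') \]
where $\psi\in L(A)$, $\bar z'$ is a finite subtuple of $\bar z$, and the two index tuples are increasing in the index set of $\bar a_*$. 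Crucially, the parameters $\bar b$ can be absorbed as \emph{variables} $\bar z'$ of the partial type, so the whole description uses only $L(A)$-formulas.

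I would then finish by noting that type spaces are compact, so the continuous projection $S_{x,\bar y,\bar z}(A)\to S_{x,\bar z}(A)$ sends the closed set defined above to a closed set, which is exactly $X$. Hence $X$ is type-definable over $A$, and Lemma \ref{lem_def} yields that $\pi$ is Ind-definable over $A$.

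This is essentially a projection-of-closed-set argument; the only subtle point is the second bullet above, namely recognizing that the statement ``$\bar a'_*$ is indiscernible over $A\bar b$'' becomes a genuinely $A$-type-definable condition once $\bar b$ is treated as a free variable rather than a parameter. After that observation, the argument is routine compactness, so I do not anticipate a real obstacle.
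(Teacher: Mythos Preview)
Your proposal is correct and follows essentially the same approach as the paper: the paper's proof simply observes that the set $\{(\bar a,b):\bar a\text{ is indiscernible over }Ab\}$ is type-definable over $A$ and then invokes Lemmas \ref{lem_b} and \ref{lem_def}, which is exactly what you do in more detail (including the projection/compactness step that the paper leaves implicit).
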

\begin{proof}
Given any index set $\mathcal I$, the set $\{(\bar a=(a_i)_{i\in \mathcal I},b) : \bar a $ is indiscernible over $Ab\}$ is type definable over $A$. The lemma then follows at once from lemmas \ref{lem_b} and \ref{lem_def}.
\end{proof}

Let $\bar a_* \unlhd_{AIa} I_0+\bar a_*+I_1$. Then $I+\bar a_* \unlhd_{Aa} I+I_0+\bar a_*+I_1$, so by $\boxtimes_1$, we have $I+I_0+I_1 \models \Mor(q)|Aa$.

Next build $\bar b_*$ such that:

\begin{itemize}
\item[$\otimes_0$] $I+I_0+I_1+\bar b_* \models \Mor(q)|A$;

\item[$\otimes_1$] $\tp(\bar b_*/Aa)= \tp(\bar a_*/Aa)$.
\end{itemize}

To see that this is possible, construct $I_0+\bar a_* \unlhd_{AIa} I_0+I_1'+\bar a_*$, where $I_1'$ has the same order type as $I_1$. Then $I+I_0+I_1' \equiv_{Aa} I+I_0+I_1$, as both are Morley sequences of $q$ over $Aa$ by $\boxtimes_1$, and we can take $\bar b_*$ such that $I+I_0+I_1'+\bar a_* \equiv_{Aa} I+I_0+I_1+\bar b_*$.

By Proposition \ref{prop_existssbase}, we can now find $I_0+\bar a_*+I_1\unlhd_{AI} J_0+\bar a_0+\bar a_*+\bar a_1+J_1$ such that, denoting $\bar a_0+\bar a_*+\bar a_1$ by $\bar a_{**}$:

\begin{itemize}
\item[$\otimes_2$] $\bar b_* \models \Mor(q)|AIJ_0J_1$;

\item[$\otimes_3$] $\bar a_{**}$ strongly dominates $\bar b_*$ over $(J_0+J_1,AI)$.
\end{itemize}


\smallskip
We now come to the main technical lemma of this proof.

\begin{lemme}\label{lem_maintech}
Assume that $a'\models \pi|B$, then there is $\bar b'_*$ such that $\tp(a'\bar b'_*/A)= \tp(a\bar a_*/A)(=\tp(a\bar b_*/A))$ and $\bar b'_*\models \Mor(q)|B$.
\end{lemme}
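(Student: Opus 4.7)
My plan is as follows. By Lemma \ref{lem_b}, the hypothesis $a'\models\pi|B$ yields some $\bar a'_*$ indiscernible over $B$ with $\tp(a'\bar a'_*/A)=\tp(a\bar a_*/A)$. I would then rerun the entire construction preceding this lemma with $B$ in place of $A$ (and $Ba'$ in place of $Aa$), taking $\bar a'_*$ in the role of $\bar a_*$. The $B$-indiscernibility of $\bar a'_*$ is precisely what makes this transfer possible: because $\bar a'_*$ is a Morley sequence of $q$ over $A$ (inherited from $\bar a_*$), any $I^\flat\models\Mor(q)|Ba'\bar a'_*$ has $I^\flat+\bar a'_*$ indiscernible over $B$, placing $\tp(\bar a'_*/Ba')$ in the $B$-analogue of the class $\Omega$.

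Next, I would transfer the maximality lemma that originally produced $\bar a_*$: starting from $\bar a'_*$, iteratively extend $B$-indiscernibly to some $\bar a'^{\diamond}_*$ (so $\bar a'_*\unlhd_B\bar a'^{\diamond}_*$) realizing the $B$-version of the $s$-property over $Ba'$. With $\bar a'^{\diamond}_*$ in hand I would choose $I^\flat,I^\flat_0,I^\flat_1$, build $\bar b'^{\diamond}_*$ satisfying the primed versions of $\otimes_0$--$\otimes_1$ (so that $I^\flat+I^\flat_0+I^\flat_1+\bar b'^{\diamond}_*\models\Mor(q)|B$ and $\tp(\bar b'^{\diamond}_*/Ba')=\tp(\bar a'^{\diamond}_*/Ba')$), and apply Proposition \ref{prop_existssbase} with base $BI^\flat$. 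Restricting $\bar b'^{\diamond}_*$ to the sub-sequence $\bar b'_*$ of the same order type as $\bar a_*$ (matching the inclusion $\bar a'_*\subseteq\bar a'^{\diamond}_*$) preserves the Morley-over-$B$ property and yields $\tp(\bar b'_*/Ba')=\tp(\bar a'_*/Ba')$; restricting further to $Aa'$ and invoking $\tp(a'\bar a'_*/A)=\tp(a\bar a_*/A)$ gives the desired $\tp(a'\bar b'_*/A)=\tp(a\bar a_*/A)$.

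The hard part will be ensuring that the iterated extensions $\bar a'_*\unlhd_B\bar a'^{\diamond}_*$ used in the $B$-transfer of the maximality lemma do not disturb the $A$-types needed for the first half of the conclusion. The key point here is that $\lim(\bar a'_*)|_A$ and $\lim(\op(\bar a'_*))|_A$ both coincide with $q|_A$ (since $\bar a'_*$ is already an $A$-Morley sequence of $q$), so $B$-indiscernible extensions of $\bar a'_*$ obtained via Morley sequences of these limit types preserve both $B$-indiscernibility and the $A$-type of the original sub-sequence $\bar a'_*$, letting the $B$-rerun of the construction coexist with the $A$-type constraint inherited from Lemma \ref{lem_b}.
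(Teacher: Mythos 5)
Your plan founders at its very first step, and the failure is not a technicality: the claim that $\tp(\bar a'_*/Ba')$ lies in the $B$-analogue of $\Omega$ is essentially the conclusion of the lemma in disguise. Lemma \ref{lem_b} gives you only that $\bar a'_*$ is indiscernible over $B$ and has the right type over $A$; nothing ties $\bar a'_*$ to $q$ \emph{over $B$}. Your justification --- that since $\bar a'_*$ is a Morley sequence of $q$ over $A$, any $I^\flat\models \Mor(q)|Ba'\bar a'_*$ makes $I^\flat+\bar a'_*$ indiscernible over $B$ --- is false in general: the EM-type of $\bar a'_*$ over $B$ can be completely different from that of a Morley sequence of $q$ over $B$ (think of a dense order where $\bar a'_*$ sits below some element of $B\setminus A$ while realizations of $q|B$ lie above all of $B$; then $I^\flat+\bar a'_*$ is not even order-indiscernible). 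Worse, if your claim did hold, the lemma would be trivial: since $I^\flat$ would be an infinite Morley sequence of the $A$-invariant type $q$ over $B$ and $I^\flat+\bar a'_*$ would be $B$-indiscernible, the whole sequence, hence $\bar a'_*$ itself, would already be a Morley sequence of $q$ over $B$, and you could take $\bar b'_*=\bar a'_*$. So the step where you ``rerun the construction over $B$'' begs precisely the question the lemma is designed to answer: how to pass from a witness that is merely $B$-indiscernible (with the correct $A$-type) to one that follows $q$ over $B$.

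The paper's proof does something you never invoke: it keeps the whole domination configuration on the $A$-side, where it can be transported. Strong domination of $\bar b_*$ by $\bar a_{**}$ is a condition over $(J_0+J_1,AI)$, i.e.\ expressible in the type over $A$ of the configuration, so the equality $\tp(a'\bar a'_*/A)=\tp(a\bar a_*/A)$ lets one copy it to primed sequences chosen $\unlhd_{Ba'}$-freely over $B$ (giving $\bullet_0$--$\bullet_3$). The decisive step is then Fact \ref{prop_extstablebase} applied with the \emph{external parameter} $d=B$: the only inputs involving $B$ are that $J'_0+\bar a'_{**}+J'_1$ is indiscernible over $B$ (available because $\bar a'_*$ is $B$-indiscernible) and that $J'_2$ is Morley over $B$ plus everything (arranged by choosing $J'_2$ last), and the output is exactly that $\bar b'_*\models\Mor(q)|B$. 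Any correct proof must contain some such mechanism converting $B$-indiscernibility of the dominating data into the Morley-over-$B$ property of $\bar b'_*$; your proposal has no substitute for it.
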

\begin{proof}
	We have \[I+\bar a_* \unlhd_{A} I+I_0+\bar a_*+I_1 \unlhd_{A} I+J_0+\bar a_0+\bar a_*+\bar a_1+J_1.\] By transitivity of $\unlhd_{A}$ and $\boxtimes_1$, we have:
	
	\begin{itemize}
		\item[$\otimes_4$] $\bar a_* \unlhd_{Aa} I+J_0+\bar a_0+\bar a_*+\bar a_1+J_1$.
		
	\end{itemize}
	
	Let $\op(J_2)$ be a Morley sequence in $\lim(J_1)$ over everything constructed so far. We then have \[I+J_0+\bar a_0+\bar a_*+\bar a_1+J_1\unlhd_{A} I+J_0+\bar a_0+\bar a_*+\bar a_1+J_1+J_2.\] By transitivity of $\unlhd_A$ and the last statement in $\boxtimes_1$, $J_2$ is a Morley sequence of $q$ over $AaIJ_0 \bar a_{**} J_1$. Also we have $J_0+J_1 \unlhd J_0+J_1+J_2$ over everything constructed so far, and by the definition of strong domination, $\bar a_{**}$ strongly dominates $\bar b_*$ over $(J_0+J_1+J_2,AI)$.
	
	Also by $\otimes_2$ and the construction of $J_2$:
	\begin{itemize}
		\item[$\otimes_5$] $\bar b_* \models \Mor(q)|AIJ_0J_1J_2$.
	\end{itemize}

Assume that $a'\models \pi|B$ and by Lemma \ref{lem_b}, let $\bar a'_*$ be such that $\tp(a'\bar a'_*/A)=\tp(a\bar a_*/A)$ and $\bar a'_*$ is indiscernible over $B$.
Construct $\bar a'_* \unlhd_{Ba'} I'+J_0'+\bar a_0'+\bar a'_*+\bar a'_1+J'_1$, where each primed sequence has the same order type as its unprimed counterpart. By $\otimes_4$, we have:
$$\tp(I'J'_0\bar a'_0\bar a'_*\bar a'_1J'_1\hat{~} a'/A)=\tp(IJ_0\bar a_0\bar a_*\bar a_1J_1\hat{~} a/A).$$
Build then $J'_2\models \Mor(q)$ over everything constructed so far. Then again
$$\tp(I'J'_0\bar a'_0\bar a'_*\bar a'_1J'_1J'_2\hat{~} a'/A)=\tp(IJ_0\bar a_0\bar a_*\bar a_1J_1J_2\hat{~} a/A).$$
So we can find some $\bar b'_*$ such that
$$\tp(I'J'_0\bar a'_0\bar a'_*\bar a'_1J'_1J'_2\bar b'_*\hat{~} a'/A)=\tp(IJ_0\bar a_0\bar a_*\bar a_1J_1J_2\bar b_*\hat{~} a/A).$$
Let $\bar a'_{**} = \bar a'_0\bar a'_*\bar a'_1$. We now have:
\begin{itemize}
\item[$\bullet_0$] $J'_0+\bar a'_{**}+J'_1$ is indiscernible over $BI'$;

\item[$\bullet_1$] $J'_2$ is a Morley sequence of $q$ over $B\cup J'_0\bar a'_{**}J'_1$;

\item[$\bullet_2$] $\bar a'_{**}$ strongly dominates $\bar b'_*$ over $(J'_0+J'_1+J'_2, AI')$;

\item[$\bullet_3$] $\bar b'_* \models \Mor(q)|AI'J'_0J'_1J'_2$.
\end{itemize}
By Fact \ref{prop_extstablebase} (taking $d=B$, $A=AI'$,  and $I=J'_0+J'_1+J'_2$), $\bar b'_*$ is a Morley sequence of $q$ over $B$ as required.
\end{proof}

We can now show:

\begin{prop}\label{prop_isgs}
The partial type $\pi(x)$ is generically stable over $A$.
\end{prop}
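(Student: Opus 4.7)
The plan is to verify generic stability of $\pi$ by establishing condition (GS), since Ind-definability over $A$ is already given by Lemma \ref{lem_afterrem}. The overall strategy mirrors the proof of Proposition \ref{prop_gsquant}.

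I will argue by contradiction: suppose $\phi(x;c)\in \pi$ and $(a_k)_{k<\omega}$ satisfies $a_k\models \pi|Aa_{<k}$ while $\neg\phi(a_k;c)$ holds for all $k$. Using Lemma \ref{lem_maintech} iteratively together with the automorphism-replacement trick of Proposition \ref{prop_gsquant}, I will construct tuples $\bar b^k_*$ with $\tp(a_k\bar b^k_*/A)=\tp(a\bar a_*/A)$ and $\bar b^k_*\models \Mor(q)|Aa_{<k}\bar b^{<k}_*$. At step $k+1$, since $a_{k+1}\models \pi|Aa_{\leq k}$, pick an automorphism $\sigma$ fixing $Aa_{\leq k}$ with $\sigma(a_{k+1})\models \pi|Aa_{\leq k}\bar b^{\leq k}_*$, replace $a_{>k}$ by $\sigma(a_{>k})$, and by $A$-invariance of $\pi$ replace $c$ by some $c'\equiv_A c$ so that $\phi(x;c')\in \pi$ and $\neg\phi(a_i;c')$ still holds for every $i$; then apply Lemma \ref{lem_maintech}. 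Following this construction by Ramsey and compactness (using Lemma \ref{lem_def} to see that the relevant conditions are type-definable over $A$), I may further assume the sequence $(a_k\bar b^k_*)_{k<\omega}$ is indiscernible over $Ac$ while preserving all stated properties.

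It then suffices to show that each individual block $\bar b^k_*$ is itself indiscernible over $Ac$: by Lemma \ref{lem_b} this yields $a_k\models \pi|Ac$, whence $\phi(a_k;c)$ follows from $\phi(x;c)\in\pi$, contradicting $\neg\phi(a_k;c)$. For this I will invoke Fact \ref{prop_extstablebase} with $d=c$. The proof of Lemma \ref{lem_maintech} implicitly equips each $\bar b^k_*$ with a strongly dominating sequence $\bar a^k_{**}$ over a suitable ambient Morley sequence $I^k$ of $q$. Carrying this auxiliary data along through the inductive construction and including it in the Ramsey step, the $Ac$-indiscernibility of the outer sequence should provide the indiscernibility of $J_0+\bar a^k_{**}+J_1$ over $Aa_{<k}\bar b^{<k}_* c$ required as a hypothesis of Fact \ref{prop_extstablebase}; the fact then upgrades $\bar b^k_*$ to a Morley sequence of $q$ over $Aa_{<k}\bar b^{<k}_* c$, which is in particular indiscernible over $Ac$.

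The main obstacle will be the bookkeeping in this last step: the strong-domination data must be threaded through the automorphism replacements and survive the Ramsey extraction intact, so that the hypotheses of Fact \ref{prop_extstablebase} are genuinely available at the end. The remedy is to include the tuples $(a_k,\bar b^k_*,\bar a^k_{**},I^k)$ jointly in the Ramsey sequence, using Lemma \ref{lem_def} to verify that the Morley and domination conditions are type-definable over $A$, so that the $Ac$-indiscernibility of the enlarged outer sequence delivers the required $Ad$-indiscernibility at $d=c$.
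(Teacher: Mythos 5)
Your construction of the blocks $\bar b^k_*$ (via Lemma \ref{lem_maintech} plus the automorphism-replacement trick) is exactly the paper's construction, and the preservation issues you flag for the Ramsey step can indeed be handled (the conditions $a_k\models\pi|Aa_{<k}\bar b^{<k}_*$ are type-definable by Lemma \ref{lem_def}, and ``Morley sequence of $q$'' depends only on types over $A$ by $A$-invariance of $q$). The genuine gap is the last step. You want to apply Fact \ref{prop_extstablebase} with $d=c$, and its hypotheses require that, inside block $k$, the sequence $J^k_0+\bar a^k_{**}+J^k_1$ be indiscernible over the external parameter $c$ (and $J^k_2$ a Morley sequence of $q$ over $c$ together with that data). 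You claim this follows from the $Ac$-indiscernibility of the outer sequence $(a_k,\bar b^k_*,\bar a^k_{**},I^k)_{k<\omega}$; it does not. Indiscernibility over $Ac$ of the sequence indexed by $k$, whose terms are entire blocks, gives no information whatsoever about indiscernibility over $Ac$ of the sequence sitting \emph{inside} one block. In Lemma \ref{lem_maintech} the corresponding indiscernibility over $B$ (the properties $\bullet_0$, $\bullet_1$ there) is not free: it is extracted from the hypothesis $a'\models\pi|B$ via Lemma \ref{lem_b}, and here the analogous statement $a_k\models\pi|Ac$ is precisely what you are trying to prove. Indeed, if your step worked it would work already for $k=0$ with no use of the other blocks, showing that every realization of $\pi|A$ realizes $\pi|Ac$ for every $c$ with $\phi(x;c)\in\pi$ --- far too strong. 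So Fact \ref{prop_extstablebase} cannot be fed the parameter $c$, and the contradiction is never reached.

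What the paper does instead, and what you are missing, is a pigeonhole device that produces the needed indiscernibility over the external parameter: take $\lambda$ large (not $\omega$) and realize $\pi^{(\lambda)}|A\cup\{\neg\phi(x_i;d)\}$; build the blocks $\bar a^i_*$ exactly as you do, arranging that the concatenation $\bar a^0_*+\bar a^1_*+\cdots$ is a Morley sequence of $q$ over $A$, hence $A$-indiscernible. Then shrinking of indiscernibles (Proposition \ref{shrinking1}) applied to this long sequence with the single parameter $d$ shows that some whole block $\bar a^i_*$ lies in one convex class for every formula over $Ad$, i.e.\ is indiscernible over $Ad$; Lemma \ref{lem_b} then gives $a_i\models\pi|Ad$, contradicting $\neg\phi(a_i;d)$. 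Note that length matters here: with only $\omega$ blocks the infinitely many formulas over $Ac$ can contribute break points cofinally, so no block need be clean, and Ramsey extraction over $Ac$ is no substitute, for the reason above. To repair your proof, replace the Ramsey-plus-Fact \ref{prop_extstablebase} ending by the long-sequence-plus-shrinking argument; no second appeal to strong domination is needed beyond its use inside Lemma \ref{lem_maintech}.
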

\begin{proof}
We have already seen that $\pi(x)$ is Ind-definable over $A$.

To show property (GS), let $\lambda$ be some large enough cardinal.  Assume that for some $\phi(x;d)\in \pi$, the set $\pi^{ (\omega)}((x_i)_{i<\omega})|A \cup \{\neg \phi(x_i;d):i<\omega\}$ is consistent, then by compactness, so is $\pi^{ (\lambda)}((x_i)_{i<\lambda})|A \cup \{\neg \phi(x_i;d):i<\lambda\}$. Let $(a_i:i<\lambda)$ realize it. We build inductively a sequence $(\bar a_*^ i : i<\lambda)$ such that for each $i$, $\tp(a_i\bar a_*^ i/A)=\tp(a\bar a_*/A)$, $a_i\models \pi|A{a_{<i}\bar a_*^{<i}}$, and the sequence $\bar a_*^0+\bar a_*^1 + \cdots$ is a Morley sequence of $q$ over $A$.

Let $k<\lambda$ and assume that we have constructed $\bar a_*^{l}$ for $l<k$. As $a_k \models \pi|Aa_{<k}\bar a_*^{<k}$, there is $\bar c_*^k$ such that $\tp(a_k\bar c_*^k/A)=\tp(a\bar a_*/A)$ and $\bar c_*^k\models \Mor(q)|Aa_{<k}\bar a_*^{<k}$. The sequence $(a_i : k<i<\lambda)$ realizes $\pi^{(\lambda)}$ over $C:=Aa_{\leq k}\bar a_*^{<k}$. Therefore $\tp((a_i)_{k<i<\lambda})/ C)$ has an extension to $C\bar c_*^k$ which contains $\pi^{(\lambda)}|C\bar c_*^k$. Let $(a'_i:k<i<\lambda)$ realize that extension. There is an automorphism $\sigma$ fixing $C$ pointwise and sending $a_i$ to $a'_i$ for all $k<i<\lambda$. Set $\bar a_*^{k} = \sigma^{-1}(\bar c_*^k)$. Then $\tp(a_k\bar a_*^k/A)=\tp(a\bar a_*/A)$, $\bar a_*^k\models \Mor(q)|C$ and $(a_i:k<i<\lambda)$ realizes $\pi^{ (\lambda)}|C\bar a_*^k$. This finishes the construction. Note that at a $\eta<\lambda$ limit the construction can go on: we have that $(a_i:\eta\leq i <\lambda)$ realizes $\pi^{ (\lambda)}$ over $a_{<\eta}\bar a_*^{<\eta}$ since this is true over any finite subset of it.



Having done this, recall our parameter $d$ from the first paragraph. By shrinking of indiscernibles, for some $i<\lambda$, the sequence $\bar a^ i_*$ is indiscernible over $Ad$ which implies that $a_i\models \pi|Ad$. This contradicts the hypothesis that $a_i \models \neg \phi(x;d)$. Hence $\pi(x)$ is generically stable.
\end{proof}

Next, we show domination.

\begin{prop}\label{prop_maindom}
Let $I$ be a Morley sequence of $q$ over $Aa$ and let $b\models q|AI$. Assume that $a\models \pi|AIb$, then $b\models q|Aa$.
\end{prop}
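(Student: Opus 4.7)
My plan is as follows. Applying Lemma \ref{lem_maintech} with $a' = a$ and $B = AIb$ produces a sequence, which I will call $\bar c_*$, with $\tp(a\bar c_*/A) = \tp(a\bar a_*/A)$ and $\bar c_* \models \Mor(q)|AIb$. In particular $\tp(\bar c_*/Aa) = \tp(\bar a_*/Aa) = s$, so the extremal lemma used to pick $s$ applies verbatim with $\bar c_*$ in place of $\bar a_*$. Combined with the hypothesis $b \models q|AI$ and $I$ being a Morley sequence of $q$ over $Aa$ (hence over $A$), the concatenation $I + (b) + \bar c_*$ is a Morley sequence of $q$ over $A$, and in particular $A$-indiscernible.

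The heart of the argument is the following trick: split $I = I_1 + I_2$ into two infinite dense pieces. Then $I_1$ is itself a Morley sequence of $q$ over $Aa$, and $A$-indiscernibility of $I_1 + (I_2 + (b) + \bar c_*) = I + (b) + \bar c_*$ shows that $I_1$ witnesses $\tp(I_2 + (b) + \bar c_*/Aa) \in \Omega$. Since $\bar c_* \unlhd_A I_2 + (b) + \bar c_*$ holds automatically (both sequences are $A$-indiscernible with $\bar c_*$ as a subsequence), the extremal property of $s$ yields $\bar c_* \unlhd_{Aa} I_2 + (b) + \bar c_*$.

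For each $\phi \in L(Aa)$ the equality $\textsf T(\bar c_*, \phi) = \textsf T(I_2 + (b) + \bar c_*, \phi)$, together with convexity of the shrinking-of-indiscernibles classes and the fact that $\bar c_*$ is the rightmost piece, forces every class of the coarsest $\sim_\phi$ on $I_2 + (b) + \bar c_*$ to meet $\bar c_*$; consequently $I_2 + (b)$ lies entirely inside the leftmost class, i.e.\ $\phi$ does not alternate on $I_2 + (b)$. Since this holds for every $\phi \in L(Aa)$, the sequence $I_2 + (b)$ is $Aa$-indiscernible. Since $I = I_1 + I_2$ is also $Aa$-indiscernible with the infinite overlap $I_2$, the usual gluing argument (insert elements of $I_2$ between the $I_1$-part and $b$ in any tuple and use the two indiscernibilities to reduce its type to that of a tuple lying entirely in $I_2$) yields that $I + (b)$ is $Aa$-indiscernible. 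Since the first element of $I$ realizes $q|Aa$, so does $b$, which is the desired $b \models q|Aa$.

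The main obstacle is verifying the $\Omega$-membership hypothesis of the extremal lemma: one cannot use $I$ itself as the left witness for the full sequence $I + (b) + \bar c_*$, since $I$ sits inside it, and producing a Morley sequence of $q$ over $Aa$ that can be prepended to $I + (b) + \bar c_*$ while preserving $A$-indiscernibility of the whole concatenation is not straightforward (it would require a left extension compatible with both the $Aa$-Morley structure of $I$ on one side and with $b$ and $\bar c_*$ on the other). Splitting $I = I_1 + I_2$ is precisely what circumvents this: $I_1$ becomes a legitimate left witness for the truncated sequence $I_2 + (b) + \bar c_*$, and the infinite overlap $I_2$ then lets us propagate the resulting $Aa$-indiscernibility back to all of $I + (b)$.
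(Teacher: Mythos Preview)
Your approach is the paper's: apply Lemma \ref{lem_maintech} to obtain $\bar c_*$ with $\tp(a\bar c_*/A)=\tp(a\bar a_*/A)$ and $\bar c_*\models \Mor(q)|AIb$, note that $I+(b)+\bar c_*$ is then a Morley sequence of $q$ over $A$, and use the extremal choice of $s$ via the splitting trick. The paper packages the last part as a citation of $\boxtimes_1$ (applied with $\bar c_*$ in place of $\bar a_*$, $I_0=\bar b$, $I_1=\emptyset$; it first thickens $b$ to a dense sequence $\bar b$ only to conform to the standing convention on index orders), whereas you unfold that argument by hand. Your steps through the conclusion that $I_2+(b)$ is $Aa$-indiscernible are correct and mirror the proof of $\boxtimes_1$.

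The gap is the final gluing. The assertion that $I$ and $I_2+(b)$ both $Aa$-indiscernible with infinite overlap $I_2$ forces $I+(b)$ to be $Aa$-indiscernible is false, even in stable theories: take an equivalence relation with infinitely many infinite classes, let $I$ consist of pairwise inequivalent points, and let $b$ be inequivalent to every point of $I_2$ but equivalent to some point of $I_1$; then $I$ and $I_2+(b)$ are indiscernible while $I+(b)$ is not. Your proposed manipulation (``insert elements of $I_2$ between the $I_1$-part and $b$ and use the two indiscernibilities'') cannot close this, because indiscernibility of $I$ lets you move $I_1$-points only while fixing parameters in $I$, never while fixing $b$, and indiscernibility of $I_2+(b)$ never touches $I_1$.

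The repair is already implicit in your argument: the splitting $I=I_1+I_2$ was arbitrary subject only to $I_1$ being infinite. Since $I$ is indexed by a dense order without endpoints, any finite increasing tuple from $I+(b)$ lies in $I_2+(b)$ for a suitable such splitting, and therefore has the generic type over $Aa$; hence $I+(b)$ is $Aa$-indiscernible directly. This is exactly the ``Since $J_1$ can be an arbitrarily large subset of $I$'' step in the paper's proof of $\boxtimes_1$, and it replaces the faulty gluing.
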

\begin{proof}
Let $\bar b\models \Mor(q)|AI$ containing $b$ as one of the elements in the sequence. Then $\tp(a/AIb)\cup \pi|AI\bar b$ is consistent. Let $a'$ realizes that type and let $\sigma$ be an automorphism fixing pointwise $AIb$ and sending $a$ to $a'$. Then replacing $\bar b$ by $\sigma^{-1}(\bar b)$, we may assume that $a\models \pi|AI\bar b$.
	
As $a\models \pi |AI\bar b$, by Lemma \ref{lem_maintech} there is $\bar a'_* \models \Mor(q)|AI\bar b$ such that $\tp(a\bar a'_*/A)=\tp(a\bar a_*/A)$. Then the sequence $I+\bar b+\bar a'_*$ is indiscernible over $A$. This implies that $I+\bar a'_* \unlhd_{A} I+\bar b+\bar a'_*$ and $\tp(\bar b+\bar a'_*/Aa)\in \Omega$. Hence by $\boxtimes_1$, $I+\bar b$ is indiscernible over $Aa$. Therefore $\bar b\models \Mor(q)|Aa$ and in particular, $b\models q|Aa$.
\end{proof}

We now have all we need to conclude. For any type $q(y)$, the construction described above supplies us with a generically stable type $\pi_{q}(x)$. Let $\pi(x)$ be the conjunction of all those types as $q$ varies. Then by the remark before Lemma \ref{lem_afterrem}, $\pi(x)$ is consistent, and therefore generically stable. Let $(A,a)\prec (A',a)$ be $|A|^+$-saturated. So $A'$ contains Morley sequences over $Aa$ of all types $q$ finitely satisfiable in $A$. The domination property then follows from Proposition \ref{prop_maindom}, and the theorem is proved.

\smallskip
As a corollary, we obtain a more explicit form of honest definitions. Recall the following special case of the $(p,q)$-theorem of Matou\u sek (see \cite[Section 6.2]{NIPBook}), which will be needed to prove uniformity.

\begin{fait}\label{fact_pq}
Let $\theta(y;t)$ be an NIP formula. Then for some $n$ and $N$ the following holds:

If $B\subset \monster^{|t|}$ is any set of parameters and $A\subset \monster^{|y|}$ finite such that for any $A_0\subseteq A$ of size $\leq n$, there is $e\in B$ with $A_0\subseteq \theta(A;e)$, \underline{then} there is a finite set $B_* \subseteq B$ of size $\leq N$ such that for any $a\in A$, for some $e\in B_*$, $\theta(a;e)$ holds.

\end{fait}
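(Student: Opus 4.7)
The plan is to recognize this statement as the set-theoretic dual of Matou\u sek's $(p,q)$-theorem for set systems of bounded VC-dimension, and to derive it from that theorem essentially by bookkeeping. For each $a \in A$, I would set
\[ E_a := \{ e \in B : \models \theta(a;e) \} \subseteq B. \]
Since $\theta(y;t)$ is NIP, the family $\{\{y : \models \theta(y;e)\} : e \in \monster^{|t|}\}$ of $y$-definable sets has finite VC-dimension; by finiteness of the dual VC-dimension (a standard consequence of NIP, expressing the symmetry of NIP formulas in their variables), the family $\{\theta(a;t) : a \in \monster^{|y|}\}$ of $t$-definable sets also has finite VC-dimension. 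Restricting to ground set $B$, the (finite) system $\{E_a : a \in A\}$ therefore has VC-dimension bounded by some $d$ depending only on $\theta$, not on $A$ or $B$.

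I would then set $n := d+1$ and translate the hypothesis. The assumption that for every $A_0 \subseteq A$ with $|A_0| \leq n$ there exists $e \in B$ with $A_0 \subseteq \theta(A;e)$ unwinds directly to
\[ \bigcap_{a \in A_0} E_a \neq \emptyset. \]
Thus $\{E_a : a \in A\}$ satisfies the $(n,n)$-property on ground set $B$: any $n$ of its members share a common point. Matou\u sek's $(p,q)$-theorem, applied with $p = q = n \geq d+1$ and VC-dimension $\leq d$, then produces a constant $N = N(n,d)$ and a transversal $B_* \subseteq B$ with $|B_*| \leq N$ meeting every $E_a$. Unwinding once more, for each $a \in A$ some $e \in B_*$ satisfies $\theta(a;e)$, which is the desired conclusion. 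Since $n$ and $N$ depend only on $d$, hence only on $\theta$, the required uniformity is automatic.

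The real content of the statement lies entirely in the $(p,q)$-theorem itself, proved in the cited reference by combining fractional Helly for set systems of bounded VC-dimension with the weak $\epsilon$-net theorem. Taking that as a black box, the proof I propose is pure dualization: the substantive steps are recognizing that the system $\{E_a\}$ has bounded VC-dimension (via dual-VC-finiteness for NIP formulas) and that the hypothesis is exactly the $(n,n)$-property in the dual. I do not foresee any technical obstacle beyond these two translations and the invocation of the right form of the theorem.
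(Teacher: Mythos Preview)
Your proposal is correct and matches the paper's treatment: the paper states this as a \emph{Fact} without proof, citing it as a special case of Matou\u sek's $(p,q)$-theorem (via \cite[Section 6.2]{NIPBook}), and your dualization argument is exactly the standard way to see this. There is nothing to add.
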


\begin{cor}\label{cor_betterhonest}($T$ is NIP.)
Let $\phi(x;y)$ be any formula, then there are formulas $\zeta(x;t)$, $\theta_0(y;t)$, $\theta_1(y;t)$ and $\delta(x,y,t;u)$, such that the following holds:

For any small set $A$ of size $\geq 2$ and $a\in \monster^{|x|}$, there is $d\in A^{|u|}$ such that:

-- for all $(b,e)\in A^{|y|+|t|}$, $a\models \delta(x,b,e;d)$;

-- for any finite $A_0\subseteq A$, there is some $e\in A$, $a\models \zeta(x;e)$, $A_0\subseteq \theta_0(A;e)\cup \theta_1(A;e)$ and for $\epsilon=0,1$ we have:
\[\models (\forall x,y)\left [\theta_\epsilon(y;e)\wedge \zeta(x;e) \wedge \delta(x,y,e;d) \right ]\impl \phi(x;y)^\epsilon. \]
\end{cor}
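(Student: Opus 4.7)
The plan is to combine Theorem~\ref{th_main} with a compactness argument in the spirit of Lemma~\ref{lem_equivalentcompr}(3)$\Rightarrow$(1), and then use Fact~\ref{fact_pq} (the $(p,q)$-theorem) to extract formulas $\zeta, \delta, \theta_0, \theta_1$ depending only on $\phi$. Fix $\phi(x;y) \in L$. Given $a$ and a small $A$ with $|A|\geq 2$, set $p = \tp(a/A)$ and let $\pi(x)$ be the generically stable partial type over $A$ given by Theorem~\ref{th_main}. Pass to an $|A|^+$-saturated extension $(A,a) \prec (A',a)$, and write $p' = \tp(a/A')$. By the theorem, for every global $q(y)$ finitely satisfiable in $A$, the partial type $p'(x) \cup (\pi \otimes q)|_{A'}(x,y)$ entails the complete type $(q\otimes p)|_A(y,x)$; in particular there is $\epsilon_q \in \{0,1\}$ with $p'(x) \cup (\pi \otimes q)|_{A'} \vdash \phi(x;y)^{\epsilon_q}$.

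Apply compactness on the compact subspace $S \subseteq S_y(A')$ of types finitely satisfiable in $A$. For each $q \in S$, using Ind-definability of $\pi$ and the schematic description of $\pi \otimes q$, extract $L$-formulas $\zeta_q(x;e_q) \in p'$, $\theta_q(y;c_q) \in q$, and a finite conjunct $R_q(x,y;\bar f_q)$ of $(\pi \otimes q)|_{A'}$ of the schematic form $\bigwedge_j \big(d\rho_j(y;f_j) \impl \rho_j(x;y,f_j)\big)$ with $\rho_j \in L$, $d\rho_j \in L(A)$, $f_j \in A$, such that $\zeta_q \wedge R_q \wedge \theta_q \vdash \phi(x;y)^{\epsilon_q}$. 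Compactness yields finitely many $q_1,\ldots,q_m \in S$ whose $\theta_{q_i}$'s cover every principal type $\{y=b\}$ with $b \in A$. Concatenate: take $\zeta(x;t)$ as the conjunction of the $\zeta_{q_i}$'s, $\delta(x,y,t;u)$ as the conjunction of the $R_{q_i}$'s (with $u$ carrying the $A$-parameters $f_j$ of the $d\rho_j$'s, and $t$ carrying the compression parameters $e_{q_i}$), and $\theta_\epsilon(y;t) := \bigvee_{i:\,\epsilon_{q_i}=\epsilon} \theta_{q_i}(y;t_i)$. Fix $d \in A^{|u|}$ once and for all for $(A,a)$; for each finite $A_0 \subseteq A$, the $|A|^+$-saturation of $A'$ lets us pull the compression parameters back to some $e \in A^{|t|}$, and the required implications follow by construction.

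The main obstacle is uniformity in $(A,a)$: a priori the $L$-formulas $\zeta_{q_i}, \rho_j, \theta_{q_i}$ extracted above may depend on $(A,a)$. To eliminate this dependence, apply Fact~\ref{fact_pq} to the NIP formula $\Theta_\epsilon(y; t, u) := (\forall x)\big[\zeta(x;t) \wedge \delta(x,y,t;u) \impl \phi(x;y)^\epsilon\big]$: the compression property says that for every finite $A_0 \subseteq A$ there is a single $(e,d)$ with $A_0 \subseteq \Theta_0(A;e,d) \cup \Theta_1(A;e,d)$, and the $(p,q)$-theorem converts this into a bound, depending only on $\phi$, on the number of parameter-values required to $\Theta_\epsilon$-cover $A$. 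This bound lets us fix the arities of $t$ and $u$ in advance, so that the packaged $\zeta, \delta, \theta_0, \theta_1$ depend only on $\phi$, completing the proof.
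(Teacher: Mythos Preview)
Your non-uniform argument (fixing $(A,a)$, applying Theorem~\ref{th_main}, compactness on the space of $A$-finitely satisfiable types, extracting a finite cover and packaging the formulas) is essentially the paper's proof and is fine.

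The gap is in the uniformity step. You propose to apply Fact~\ref{fact_pq} to the formula $\Theta_\epsilon(y;t,u) := (\forall x)\bigl[\zeta(x;t)\wedge\delta(x,y,t;u)\to\phi(x;y)^\epsilon\bigr]$ in order to get a bound ``depending only on $\phi$''. But $\zeta$ and $\delta$ are the very formulas whose dependence on $(A,a)$ you are trying to eliminate; hence $\Theta_\epsilon$ itself varies with $(A,a)$, and the bound $N$ produced by Fact~\ref{fact_pq} varies with it. Even if one could bound the arities of $t,u$ uniformly, that would not pin down the $L$-templates $\zeta,\delta,\theta_\epsilon$: for different $(A,a)$ one might need entirely different formulas of those arities. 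So the sentence ``This bound lets us fix the arities of $t$ and $u$ in advance, so that the packaged $\zeta,\delta,\theta_0,\theta_1$ depend only on $\phi$'' is a non sequitur.

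The paper achieves uniformity differently, and the order of the two compactness-type steps is essential. First fix an integer $n$ and observe that the statement ``there exist $\zeta,\delta,\theta_0,\theta_1$ and $d\in A$ which work for every $A_0\subseteq A$ of size $\leq n$'' is first-order in the expanded structure $(M,\pred)$ where $\pred$ names $A$. Compactness in $L_\pred$ (over the class of all such expansions) then yields finitely many candidate tuples of formulas, one of which works for any given $(A,a)$; standard coding (using $|A|\geq 2$) merges these into a single tuple $(\zeta,\delta,\theta_0,\theta_1)$ that works uniformly, but still only for $A_0$ of size $\leq n$. \emph{Only now}, with the formulas fixed, does one apply Fact~\ref{fact_pq} to $\theta:=\theta_0\vee\theta_1$: choosing $n$ large enough for that formula upgrades the covering property from $|A_0|\leq n$ to arbitrary finite $A_0$, at the cost of replacing $e$ by a tuple $(e_0,\ldots,e_{N-1})$ and taking the obvious conjunctions/disjunctions. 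In short, the $(p,q)$-theorem is used to remove the bound on $|A_0|$ \emph{after} the templates have been made uniform by compactness in $L_\pred$, not to make the templates uniform.
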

\begin{proof}
Let $\phi(x;y)\in L$ and for now fix some type $p(x)=\tp(a/A)$.

Let $\pi(x)$ be given by Theorem \ref{th_main} for this $p$. Let $(A,a)\prec (A',a)$ be sufficiently saturated and let $S$ be the set of types in $S_y(A')$ finitely satisfiable in $A$. For $q(y)\in S$, let $\epsilon = \epsilon(q)$ be such that $q(y)\otimes p(x) \vdash \phi(x;y)^{\epsilon}$. By Theorem \ref{th_main} and compactness there are: 

-- $\theta_q(y;e)\in q|A'$,

-- $\zeta_q(x;e)\in \tp(a/A')$,

-- finitely many pairs $\psi_{q,i}(x;y,t), d\psi_{q,i}(y,t;d)$, $d\in A$, where each one of the partial types $(\psi_{q,i}(x;y,t),d\psi_{q,i}(y,t;d))$ is in $\pi(x)$, such that 
\[\theta_q(y;e)\wedge \zeta_q(x;e)\wedge \bigwedge_i \left ( d\psi_{q,i}(y,e;d)\impl \psi_{q,i}(x,y,e)\right ) \vdash \phi(x;y)^{\epsilon}.\] Allowing $e$ and $d$ to be infinite, we can assume that those parameters are the same for all $q$.

By compactness, let $S_*\subseteq S$ be a finite set such that $\{\theta_q(y;e) : q\in S_*\}$ covers $S$. For $\epsilon = 0,1$, set $S_\epsilon = \{q\in S_*: \epsilon(q)=\epsilon\}$. Define $\theta_\epsilon(y;e)= \bigvee_{q\in S_\epsilon} \theta_q(x;e)$, $\zeta(x;e)=\bigwedge_{q\in S_*} \zeta_q(x;e)$ and let $(\psi_i(x,y,t),d\psi_i(y,t;d))_{i<n}$ enumerate all pairs of formulas $(\psi_{q,i}(x;y,t),d\psi_{q,i}(y,t;d))$ for $q\in S_*$. We can now revert to assuming that $e$ and $d$ are finite. Then we have:
\[\models (\forall x,y)\left [\theta_\epsilon(y;e)\wedge \zeta(x;e) \wedge \bigwedge_{i<n}\left  ( d\psi_i(y,e;d)\impl \psi_i(x,y,e)\right )\right ]\impl \phi(x;y)^\epsilon. \]
We obtain what we want by setting $\delta(x,y,t;d) = \bigwedge_{i<n}\left  ( d\psi_i(y,t;d)\impl \psi_i(x,y,t)\right )$.

It remains to show uniformity, {\it i.e.}, that the formulas $\zeta$, $\theta_\epsilon$, $\delta$ can be chosen so as to depend only on $\phi$ and not on $p$.
The proof is exactly like the proof of uniformity of honest definitions in \cite[Theorem 11]{ExtDef2}. Fix some $\phi(x;y)$ and $n<\omega$. Extend the language $L$ to add a new unary relation symbol $\pred(y_0)$. Let $L_\pred$ be the resulting language. Let $M\models T$ and $M'$ an expansion of $M$ to $L_\pred$. Set $A=\pred(M')$ and pick some $a\in M$. Then there are $\zeta(x;t)$, $\delta(x,y,t;u)$, $\theta_\epsilon(y;t)$ and $d\in A^{|u|}$ such that for any $A_0\subseteq A$ \emph{of size $\leq n$}, we can find $e\in A$ as in the statement. This last condition is first-order expressible as we are quantifying over subsets of $A$ of size $\leq n$. Hence by compactness, there are finitely many tuples $\{ (\zeta_i,\delta_i,\theta_{\epsilon,i}) : i<r_*\}$ such that for any $M\models T$, any expansion $M'$ of $M$ to $L_\pred$ and any $a\in M$, there is one tuple of formulas in this finite set which has the property above (still quantifying over $A_0$ of size $\leq n$). By usual coding techniques, we can find one tuple of formulas $(\zeta,\delta,\theta_0,\theta_1)$ which works for all expansions to $L_\pred$ of a model of $T$ and all choices of $a$ and $A$, as long as $|A|\geq 2$.

Take $n$ large enough so that Fact \ref{fact_pq}, applies for the formula $\theta(y;t):=\theta_0(y;t)\vee \theta_1(y;t)$. Then take any $(A,a)$, $A$ of size $\geq 2$, and any $A_0\subseteq A$ finite. For any $A_1\subseteq A_0$ of size $\leq n$, we can find $e\in A$ such that $A_1\subseteq \theta(A;e)$. By Fact \ref{fact_pq}, we can find $e_0,\ldots ,e_{N-1}\in A$ such that \[A_0 \subseteq \bigcup_{k<N} \theta(A;e_k) = \bigcup_{k<N} \theta_0(A;e_k)\cup \theta_1(A;e_k).\]  We now define: 

$\tilde \zeta(x;t_0\ldots t_{N-1}) = \bigwedge_{k<N} \zeta(x;t_k)$,

$\tilde \theta_\epsilon(y;t_0,\ldots,t_{N-1}) = \bigvee_{k<N} \theta_\epsilon(y;t_k)$ and

$\tilde \delta(x,y,t_0\ldots t_{N-1};d_0\ldots d_{N-1})=\bigwedge_{k<N} \delta(x,y,t_k;d_k)$.

\noindent
Those formulas have the required properties.
\end{proof}

Taking the notations of the corollary, note that if $(A,a)\prec (A',a)$ is sufficiently saturated, we can find $e\in A'$ such that $A\subseteq \theta_0(A',e)\cup \theta_1(A',e)$ and $a\models \zeta(x;e)$. Then $\theta_1(y;e)$ is an honest definition of $\phi(a;y)$ over $A$ since by elementarity, for all $(b,e')\in A'^{|y|+|t|}$, $a\models \delta(x,b,e';d)$.

\begin{cor}
	In Theorem \ref{th_main}, we can take $\pi$ to be based on some $A_0\subseteq A$ of size $\leq |T|$.
\end{cor}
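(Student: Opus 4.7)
The plan is to read off the necessary parameters from Corollary \ref{cor_betterhonest} and then compress the generically stable partial type using Lemma \ref{lem_reducepi}. The crucial feature of Corollary \ref{cor_betterhonest} is that the formulas $\zeta$, $\theta_\epsilon$, $\delta$ depend only on $\phi(x;y)$, while the parameter $d$ is a \emph{finite} tuple from $A$.

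First, for each $\phi(x;y) \in L$, I would apply Corollary \ref{cor_betterhonest} to $p = \tp(a/A)$ to obtain a finite tuple $d_\phi \in A$, and then set $A_0 := \bigcup_{\phi \in L} d_\phi \subseteq A$, so that $|A_0| \leq |T|$. Unpacking the proof of Corollary \ref{cor_betterhonest}, the formula $\delta_\phi(x,y,t;d_\phi)$ is a conjunction of implications $d\psi_{\phi,i}(y,t;d_\phi) \impl \psi_{\phi,i}(x,y,t)$ where each scheme $(\psi_{\phi,i}, d\psi_{\phi,i})$ already belongs to the partial type $\pi$ produced by Theorem \ref{th_main}. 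Define $\pi_0(x) \subseteq \pi(x)$ to be the partial type generated by the union of these schemes over all $\phi \in L$; by construction $\pi_0$ is Ind-definable over $A_0$.

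Next, Lemma \ref{lem_reducepi} applied to $\pi \supseteq \pi_0$ produces $\pi_*(x) \subseteq \pi(x)$ containing $\pi_0$ and generically stable over some $A_* \subseteq A$ of size $\leq |A_0| + |T| = |T|$. It then remains to verify that $\pi_*$ still witnesses Theorem \ref{th_main} for $p$: given an $|A|^+$-saturated $(A,a) \prec (A',a)$ and a global type $q$ finitely satisfiable in $A$, fix $\phi \in L$; by $|A|^+$-saturation of $A'$ and Corollary \ref{cor_betterhonest} there is $e \in A'$ with $a \models \zeta_\phi(x;e)$ and $A \subseteq \theta_{\phi,0}(A';e) \cup \theta_{\phi,1}(A';e)$, so finite satisfiability of $q$ forces $\theta_{\phi,\epsilon}(y;e) \in q$ for some $\epsilon$. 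Since $\pi_0 \subseteq \pi_*$ contains the $\delta_\phi$ scheme, for any $b$ with $(a,b) \models \tp(a/A') \cup (\pi_* \otimes q)|_{A'}$ one has $\delta_\phi(a,b,e;d_\phi)$, and the implication of Corollary \ref{cor_betterhonest} yields $\models \phi(a;b)^\epsilon$; varying $\phi$ pins down $\tp(a,b/A) = (q \otimes p)|_A$.

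The main obstacle is bookkeeping: one must confirm, from the proof of Corollary \ref{cor_betterhonest}, that $\delta_\phi$ really is a conjunction of implications whose defining schemes lie in $\pi$ and involve only $d_\phi$ as additional parameters, so that $\pi_0$ is genuinely Ind-definable over $A_0$. Once this is checked, the rest is a direct combination of the cited results and no new technical ideas are needed.
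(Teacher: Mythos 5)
Your proposal is correct and follows essentially the same route as the paper: collect the finitely many schemes $(\psi_{\phi,i},d\psi_{\phi,i})$ (with their finite parameters $d_\phi\in A$) produced in the proof of Corollary \ref{cor_betterhonest} as $\phi$ ranges over $L$, and apply Lemma \ref{lem_reducepi} to get a generically stable $\pi_*\subseteq\pi$ containing them, based on a set of size $\leq|T|$. Your explicit verification that $\pi_*$ still witnesses Theorem \ref{th_main} (which the paper leaves implicit) is sound, provided one remembers to let the variable $y$ absorb parameters from $A$ so that varying $\phi$ over $L$ really pins down the complete type over $A$.
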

\begin{proof}
	In Theorem \ref{th_main}, we can replace $\pi(x)$ by any $\pi_0(x)\subseteq \pi(x)$ which is generically stable and contains the $|T|$ many partial types $(\psi_i(x;y,t),d\psi_i(y,t;u))$ defined in the proof of Corollary \ref{cor_betterhonest} as $\phi(x;y)$ ranges over $L$. Such a $\pi_0$ exists by Lemma \ref{lem_reducepi}.
\end{proof}

\subsection{Existence of compressible types}

It is an open question whether any unstable NIP theory has a distal (non-constant) indiscernible sequence (as defined in \cite{distal}). In this section, we answer positively a related question, namely we construct a non-realized compressible type over a model.

If $p(x)$ is a type over some set $A$ and $\phi(x;y)$ a formula, we denote by $p_\phi$ the partial type of all instances of $\phi$ and $\neg \phi$ in $p$. We say that $p_\phi$ is definable if there is a formula $d\phi(y)\in L(A)$ such that for any $b\in A$, $p\vdash \phi(x;b) \iff \models d\phi(b)$.

Let $M\prec N$, $q\in S_x(N)$ and $p=q|_M$. We say that $q$ is a conservative extension of $p$ if for any formula $\phi(x;y)\in L(M)$, if $q_\phi$ is definable, then so is $p_\phi$. In particular, if $p$ is not a definable type, then $q$ is not either and so is not a realized type.

\begin{lemme}
Let $(p_i:i<\alpha)$ be an increasing sequence of conservative extensions of $p$, then $p_*:=\bigcup_{i<\alpha} p_i$ is also a conservative extension of $p$.
\end{lemme}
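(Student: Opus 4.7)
The plan is straightforward once one notices that any witness of definability for $(p_*)_\phi$ uses only finitely many parameters and therefore lives at some finite stage of the chain.

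First I would fix notation: let $N_i$ denote the (model) parameter set of $p_i$, and set $N_* := \bigcup_{i<\alpha} N_i$. The increasing hypothesis forces $M \prec N_i \prec N_j$ for $i \leq j$, so by Tarski--Vaught $N_*$ is a model with $M \prec N_*$, and $p_* = \bigcup_i p_i$ is a complete type in $S_x(N_*)$ with $p_*|_M = p$. This makes the statement of the lemma well-posed.

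For the main argument, fix $\phi(x;y) \in L(M)$ and suppose $(p_*)_\phi$ is defined by some $d\phi(y) \in L(N_*)$. Since $d\phi$ has only finitely many parameters, there exists $i < \alpha$ with $d\phi(y) \in L(N_i)$. I then claim $d\phi$ also defines $(p_i)_\phi$. Indeed, for any $b \in N_i$, the type $p_i$ is complete over $N_i$ and $p_i \subseteq p_*$, so $\phi(x;b) \in p_i$ iff $\phi(x;b) \in p_*$ iff $\models d\phi(b)$. Thus $(p_i)_\phi$ is definable (over $N_i$), and since $p_i$ is a conservative extension of $p$, the partial type $p_\phi$ is definable over $M$. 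As $\phi$ was arbitrary, $p_*$ is conservative over $p$.

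I do not expect any genuine obstacle: the only substantive point is to check that ``$p_i$ agrees with $p_*$ on instances $\phi(x;b)$ with $b \in N_i$'', which is immediate from the fact that $p_i$ is a complete type over $N_i$ included in the consistent type $p_*$. Everything else is bookkeeping about where parameters of $d\phi$ live.
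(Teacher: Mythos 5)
Your argument is correct and is essentially the paper's proof: a putative definition $d\phi$ of $(p_*)_\phi$ has finitely many parameters, hence lives in some $N_i$, defines $(p_i)_\phi$ by completeness of $p_i$ over $N_i$, and conservativity of $p_i$ then gives definability of $p_\phi$. The extra remarks about the union of the chain being a model and about $p_i$ agreeing with $p_*$ over $N_i$ are just the bookkeeping the paper leaves implicit.
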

\begin{proof}
We can assume that $\alpha$ has no last element. Let $\phi(x;y)$ be a formula such that $(p_*)_{\phi}$ is definable by a formula $d\phi(y)$. Then there is some $i<\alpha$ for which the parameters in $d\phi$ belong to the domain of $p_i$. But then $(p_i)_\phi$ is also defined by the formula $d\phi(y)$ and therefore by hypothesis $p_\phi$ is definable.
\end{proof}

\begin{lemme}\label{lem_skolem}
Let $\kappa \geq |T|$. Let $M$ be $\kappa^+$-saturated and $q\in S(M)$ which is finitely satisfiable in a set of size $\kappa$. Then there is an extension $M\prec N$ containing a realization of $q$ such that for any finite tuple $c\in N$, $\tp(c/M)$ is finitely satisfiable in some subset of size $\kappa$.
\end{lemme}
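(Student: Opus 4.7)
\end{lemme}

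\begin{proof}
The plan is to realize $q$ inside the Skolem hull of $M$ augmented by a single seed $b$, working in a Skolemization of $T$. Every element of such a hull is a fixed Skolem term in $b$ with parameters from $M$, which is precisely what will let us control the finite satisfiability over $M$ of the type of an arbitrary finite tuple of $N$.

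First I would fix a Skolemization: expand $L$ to $L^{\text{sk}}$ with $|L^{\text{sk}}| \le |T| \le \kappa$, and let $T^{\text{sk}} \supseteq T$ be a corresponding Skolem theory. Expand $M$ to a model $M^{\text{sk}} \models T^{\text{sk}}$. Viewed as a partial $L^{\text{sk}}$-type over $M^{\text{sk}}$, the complete $L$-type $q$ is still finitely satisfiable in the given $\kappa$-sized set $A$, so by the usual Zorn argument it is contained in a complete $L^{\text{sk}}$-type $q^{\text{sk}}$ over $M^{\text{sk}}$ that is still finitely satisfiable in $A$. Realize $q^{\text{sk}}$ by some $b$ in an $L^{\text{sk}}$-elementary extension of $M^{\text{sk}}$, and let $N^{\text{sk}}$ be the $L^{\text{sk}}$-Skolem hull of $M^{\text{sk}} \cup \{b\}$. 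Then $N^{\text{sk}}$ is an $L^{\text{sk}}$-elementary substructure of the ambient model, so its $L$-reduct $N$ is an $L$-elementary extension of $M$ containing the realization $b$ of $q$.

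To verify the finite satisfiability clause, any finite tuple $\bar c \in N$ can be written as $\bar c = (f_1(b, \bar m), \ldots, f_r(b, \bar m))$ for $L^{\text{sk}}$-terms $f_1, \ldots, f_r$ and a common parameter tuple $\bar m \in M$ (pool parameters across the $f_i$ by padding with dummy variables). Set $B := \{f_i(a, \bar m) : a \in A,\ 1 \le i \le r\}$; then $|B| \le r \cdot \kappa = \kappa$. Given any finite $\Delta = \{\phi_j(\bar z, m_j) : j \le k\} \subseteq \tp(\bar c / M)$, the $L^{\text{sk}}$-formulas $\psi_j(y) := \phi_j(f_1(y, \bar m), \ldots, f_r(y, \bar m), m_j)$ all belong to $q^{\text{sk}}$, so by finite satisfiability of $q^{\text{sk}}$ in $A$ there is $a_0 \in A$ satisfying every $\psi_j$. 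The tuple $(f_1(a_0, \bar m), \ldots, f_r(a_0, \bar m)) \in B^r$ then realizes $\Delta$, showing that $\tp(\bar c/M)$ is finitely satisfiable in $B$.

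The hard part, and the main reason I would use Skolemization rather than a direct Tarski--Vaught construction, is handling \emph{joint} types of witnesses introduced at different stages: while one can always witness a single existential $(\exists x)\phi(x, \bar c)$ by some $d$ with $\tp(d, \bar c/M)$ finitely satisfiable in some suitable $\kappa$-sized set, finite satisfiability does not combine monotonically across independently chosen small sets, and guaranteeing that an arbitrary finite tuple assembled from many stages still has a jointly finitely satisfiable type in some single $\kappa$-sized set is delicate. Skolemization eliminates this difficulty in one blow: every element of $N$ is rigidly tied to the single seed $b$ by a fixed term, so the $L$-type over $M$ of any finite tuple reduces to a single instance of the $L^{\text{sk}}$-coheir $q^{\text{sk}}$ and automatically inherits its finite satisfiability property.
\end{proof}
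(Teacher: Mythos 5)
Your proof is correct and follows essentially the same route as the paper: Skolemize, extend $q$ to a complete Skolemized type still finitely satisfiable in the $\kappa$-sized set $A$, realize it by $b$, take the Skolem hull of $Mb$, and note that any finite tuple of $N$ is a tuple of Skolem terms in $b$ over $M$, so its type over $M$ is finitely satisfiable in the ($\kappa$-sized) image of $A$ under those terms (the paper phrases this as finite satisfiability in the Skolem hull of $A\bar m$). Your write-up is, if anything, slightly more explicit about the witnessing set.
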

\begin{proof}
Expand $M$ to a Skolemization $T^{Sk}$ of $T$. Let $A\subset M$ be of size $\kappa$ such that $q$ is finitely satisfiable in $A$. Extend $q$ to a type $\tilde q$ over $M$ in the sense of $T^{Sk}$ which is still finitely satisfiable in $A$. Let $b\models \tilde q$ and let $N$ be the Skolem hull of $Mb$. Then $N$ is an extension of $M$. If $c$ is a finite tuple in $N$, then $c=\bar f(b,m)$ for some finite $m\in M$ and tuple $\bar f$ of $\emptyset$-definable functions. Then $\tp(c/M)$ is finitely satisfiable in the Skolem hull of $Am$ which has size $\kappa$.
\end{proof}

\begin{lemme}($T$ is NIP.)
Let $p\in S(M)$ and assume that $p$ is not compressible. Then there is some partial type $\pi$ generically stable over $M$, $\pi|M\subseteq p$, and a conservative extension $q\in S(N)$ of $p$ such that $\pi |N \nsubseteq q$. Furthermore, we can find $N$ such that $|N|=|M|+|T|$.
\end{lemme}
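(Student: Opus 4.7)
The plan is to apply Theorem \ref{th_main} (together with the preceding Corollary) to obtain $\pi$ generically stable over some $M_0\subseteq M$ with $|M_0|\leq|T|$ and $\pi|_M\subseteq p$, and then to translate the non-compressibility of $p$ into the existence of a tuple $b\in\monster$ with $a\not\models\pi|_{Mb}$; the desired $N$ will be built as a Skolem-hull extension from $b$.

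To produce $b$: by Lemma \ref{lem_equivalentcompr}, non-compressibility of $p$ gives, in an $|M|^+$-saturated elementary extension $(M,a)\prec(M',a)$, a global type $\tilde q(y)$ finitely satisfiable in $M$, realizations $b_1,b_2\models\tilde q|_{M'}$, and a parameter-free formula $\phi(x;y)\in L$ with $\phi(a;b_1)\wedge\neg\phi(a;b_2)$. If we had $a\models\pi|_{M'b_i}$ for both $i=1,2$, then Theorem \ref{th_main} would force $\tp(a,b_i/M)=(\tilde q\otimes p)|_M$ for both, contradicting that $\phi\in L$ distinguishes $(a,b_1)$ from $(a,b_2)$. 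Hence, after relabeling, $a\not\models\pi|_{M'b_2}$, witnessed by some scheme $(\psi(x;y),d\psi(y))\subseteq\pi$ with $d\psi\in L(M_0)$ and a tuple $c$ with entries in $M'\cup\{b_2\}$ such that $d\psi(c)$ holds while $\neg\psi(a;c)$.

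Let $b$ be the tuple obtained by concatenating $b_2$ with the $M'$-entries appearing in $c$, so that $c$ is a sub-tuple of $b$. Let $N$ be the Skolem hull of $M\cup\{b\}$ in a fixed Skolemization of $T$; then $|N|\leq|M|+|T|$, the formula $\psi(x;c)$ lies in $\pi|_N$, and $\neg\psi(a;c)$, so $q:=\tp(a/N)$ satisfies $\pi|_N\not\subseteq q$.

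For conservativeness, I intend to choose the $M'$-parameters of $c$, and thereby $b$, to realize a global type over $M$ finitely satisfiable in $M$, by lifting $\tilde q$ to a global $M$-finitely-satisfiable type in the longer variable tuple (with projection to the $b_2$-coordinate still $\tilde q$) and using a Ramsey-and-compactness argument to find the witness $c$ inside such a lift. Lemma \ref{lem_skolem} then yields that every finite tuple of $N$ has type over $M$ finitely satisfiable in a subset of $M$ of size $\leq|M|$; a standard NIP transport argument (definitions of $q_\chi$ pull back across coheir extensions) then converts the definability of any $q_\chi$ into the definability of $p_\chi$, establishing conservativeness. The main obstacle I anticipate is this last coordination: simultaneously arranging that $b$ both witness the failure of $a\models\pi|_{Mb}$ and realize a type over $M$ finitely satisfiable in a small subset, which requires careful use of the Ind-definability of $\pi$ to group parameters so that the witness survives within the coheir-like tuple.
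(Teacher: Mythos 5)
The first half of your argument is essentially the paper's: you use Lemma \ref{lem_equivalentcompr} to get a type finitely satisfiable in $M$ that is not weakly orthogonal to $\tp(a/M')$, and Theorem \ref{th_main} to conclude that for one of the two realizations, $a\not\models\pi|_{M'b_2}$, yielding a witness $\psi(x;c)\in\pi$ with $\neg\psi(a;c)$. That part is fine. The genuine gap is exactly the point you flag as your ``main obstacle'': the conservativeness of $q=\tp(a/N)$ over $p$, and the mechanism you propose for it does not work. First, arranging that $b$ (or $c$) realizes a type over $M$ finitely satisfiable in $M$ is vacuous --- every type over a model is finitely satisfiable in that model --- and even finite satisfiability in a small $A_0\subseteq M$ only yields that $p_\chi$ is $A_0$-invariant as a subset of $M$, which does not imply definability, since $M$ carries no saturation hypothesis whatsoever. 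Second, the claimed ``standard transport'' (definability of $q_\chi$ pulls back across coheir extensions) is false: in DLO take $M=\mathbb{Q}$, $p$ the type of the cut at $\sqrt{2}$, $N=\mathbb{Q}\cup\{\sqrt{2}\}$ and $q$ the type just above $\sqrt{2}$; then $\tp(N/M)$ is finitely satisfiable in $M$ (and $q$ itself is a coheir of $p$), $q_{<}$ is definable over $N$, but $p_{<}$ is not definable over $M$.

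The paper's proof does real work precisely here, and it does so by keeping an $|M|^+$-saturated pair $(M_1,a)\succ(M,a)$ inside the tower $M\prec M_1\prec N$: $N$ is built by Lemma \ref{lem_skolem} \emph{over $M_1$}, so that the parameters $c\in N$ of any definition of $q_\chi$ have type over $M_1$ finitely satisfiable in some $A\subseteq M_1$ of size $|M|$ --- small relative to the saturation of $(M_1,a)$. This makes $(p_1)_\chi$ $A$-invariant, and the saturation of the pair converts invariance into actual definability of $(p_1)_\chi$ (the argument that the relevant subset of $S_y(A)$ is clopen); finally $(M,a)\prec(M_1,a)$ pushes definability down to $p_\chi$. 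Your construction discards $M'$ when forming $N$ (you take the Skolem hull of $M\cup\{b\}$ only), so no saturated intermediate model survives into $N$ and no analogue of this step can be run. The repair is to keep $M'$ inside $N$, apply Lemma \ref{lem_skolem} over $M'$ to adjoin $b$, prove conservativeness via the invariance-plus-saturation argument, and only at the very end obtain the small model by taking an elementary substructure of the pair $(N,a)$ of size $|M|+|T|$ containing $Mb$, noting that pair-elementarity preserves both conservativeness and the witness $\pi|_{N'}\nsubseteq q|_{N'}$ --- which is exactly the paper's route.
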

\begin{proof}
Let $a\models p$ and take $(M,a)\prec (M_1,a)$ an $|M|^+$-saturated elementary extension. Define $p_1 = \tp(a/M_1)$. Then $p\subseteq p_1$ is conservative. Let $\pi$ be generically stable over $M$ given by Theorem \ref{th_main} for $p$. Since $p$ is not compressible, by Lemma \ref{lem_equivalentcompr}, there is a type $r\in S_y(M_1)$ finitely satisfiable in $M$ such that $p_1(x)\cup r(y)$ does not imply a complete type over $\emptyset$. Since by Theorem \ref{th_main} $p_1(x)\cup (\pi(x) \otimes r(y))|_{M_1}$ implies a complete type over $M$, if $b\models r$, there is an extension of $p_1$ to $M_1b$, say $p_2$, such that $\pi|Mb\nsubseteq p_2$. By Lemma \ref{lem_skolem}, we can find a model $N\succ M_1$ containing $b$ such that for every finite tuple $c\in N$, $\tp(c/M_1)$ is finitely satisfiable in some set of size $|M|$. Let $q$ be an extension of $p_1$ to $N$ such that $\pi|N\nsubseteq q$. Then $q$ is a conservative extension of $p_1$: Let $\phi(x;y)$ be any formula such that $q_\phi$ is definable. Let $c\in N$ be the parameters used in the definition. Let $A\subseteq M_1$ be a set of size $|M|$ such that $\tp(c/M_1)$ is finitely satisfiable in $A$. Then $(p_1)_\phi$ is $A$-invariant. Let $S_1 \subseteq S_y(A)$ be the set of types $s$ such that $p_1 \vdash \phi(x;d)$ for $d\in s(M_1)$, and let $t$ be in the closure of $S_1$, then as $(M_1,a)$ is $|M|^+$-saturated, there is $d\in t(M_1)$ with $p_1 \vdash \phi(x;d)$. Hence $S_1$ is closed and so is its complement by the same argument. Therefore $(p_1)_\phi$ is definable.

To show the furthermore part, take an elementary substructure $(N',a)\prec (N,a)$ of size $|M|$, where $a\models q$ and $N'$ contains $Mb$.\end{proof}

Recall that a type $p(x)$ is \emph{stable} if there is no formula $\phi(x;y)$, realizations $(a_i)_{i<\omega}$ of $p$ and tuples $(b_i)_{i<\omega}$ such that $\phi(a_i;b_j)$ holds if and only if $i\leq j$. A type is stable if only if all of its extensions are definable. A theory is stable if and only if all types are stable. (See for example \cite{HasOn}.)

\begin{thm}($T$ is NIP.)
Let $p\in S(M)$ be non-stable, then there is an extension $q$ of $p$ which is non-realized and compressible.
\end{thm}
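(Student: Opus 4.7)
The plan is to iterate the preceding lemma transfinitely, producing a chain of conservative extensions of $p$ until compressibility is achieved. Non-stability is used to seed the chain with a non-definable extension; since non-definability is preserved along conservative extensions and a non-definable complete type cannot be realized (a realized type $\tp(c/N)$ is $\phi$-definable for every $\phi$), the terminal type will automatically be non-realized.

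Concretely, since $p$ is non-stable and a type is stable iff all of its extensions are definable, fix a non-definable extension $p_0 \in S(M_0)$ of $p$. Now build a continuous, increasing sequence of conservative extensions $(p_\alpha : \alpha \leq \alpha^*)$ with $p_\alpha \in S(M_\alpha)$ as follows: at stage $\alpha$, if $p_\alpha$ is compressible, halt and set $q := p_\alpha$; otherwise, the preceding lemma supplies a generically stable partial type $\pi_\alpha$ over $M_\alpha$ with $\pi_\alpha | M_\alpha \subseteq p_\alpha$ together with a conservative extension $p_{\alpha+1} \in S(M_{\alpha+1})$, $|M_{\alpha+1}| \leq |M_\alpha| + |T|$, satisfying $\pi_\alpha | M_{\alpha+1} \not\subseteq p_{\alpha+1}$; at limit stages take unions, which are conservative extensions by the first lemma of this subsection. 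By the preceding discussion each $p_\alpha$ is non-definable and hence non-realized, so once the process halts, the resulting $q$ has the required properties.

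The main obstacle is to show that the construction must terminate. The strategy is to track the generically stable content of $p_\alpha$ as a monotone invariant: each formula of $\pi_\alpha$ that witnesses $\pi_\alpha | M_{\alpha+1} \not\subseteq p_{\alpha+1}$ is permanently excluded from every gs partial type contained in any $p_\beta$ for $\beta > \alpha$, so the maximal generically stable partial type contained in $p_\alpha$ strictly shrinks at each successor stage. Combining this with the size bound $\leq |T|$ on bases of gs partial types (Lemma \ref{lem_reducepi} together with the corollary following Theorem \ref{th_main}), a cardinal of the form $(|M|+|T|)^+$ bounds the length of the chain; this is the delicate book-keeping step, but once termination is established, $q := p_{\alpha^*}$ is the sought non-realized compressible extension of $p$.
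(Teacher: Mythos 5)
Your overall architecture (seed with a non-definable extension, iterate the preceding lemma along a chain of conservative extensions, use the first lemma at limits, and note that conservativity preserves non-definability and hence non-realizedness) is sound, but the proof has a genuine gap at exactly the point you flag: termination. The invariant you propose does not behave monotonically. At a successor stage the lemma excludes some formula $\phi(x;b)\in\pi_\alpha$ with $b\in M_{\alpha+1}$ from all later types, but the ``maximal generically stable partial type contained in $p_\beta$'' is taken over the base $M_\beta$, which grows with $\beta$: new generically stable partial types, defined only over the larger base and using the newly added parameters, can (and in general will) appear at every stage. So the sequence of maximal generically stable partial types is not decreasing under inclusion, and ``strictly shrinks'' is not justified. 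Nor does a crude count work: each stage excludes one formula but simultaneously adds $\leq |M|+|T|$ new parameters, so the pool of candidate formulas grows at least as fast as it is depleted, and no cardinal of the form $(|M|+|T|)^+$ bounds the length of the chain by this argument alone. Since the whole conclusion rests on the process halting at a compressible $p_\alpha$, the proof is incomplete as it stands; nothing is derived from the assumption that every $p_\alpha$ is non-compressible.

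The paper avoids any halting argument by running a different, closure-type construction. One builds only an $\omega$-chain $p=p_0\subseteq p_1\subseteq\cdots$ of conservative extensions over models $M_i$ of size $|M|+|T|$, arranging at each step that \emph{every} finitely definable partial type $(\phi(x;y),d\phi(y))$ over $M_i$ consistent with $p_i$ is either killed by $p_{i+1}$ or is consistent with \emph{all} conservative extensions of $p_{i+1}$ (this is done by enumerating the $|M|+|T|$ many such partial types and taking a killing conservative extension whenever one exists). For the union $q$ over $M_*=\bigcup M_i$, any finitely definable partial type over $M_*$ has finitely many parameters, hence lives over some $M_i$, and its fate was settled there: if it is consistent with $q$, it is consistent with every conservative extension of $q$ (using transitivity of conservativity). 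This passes to Ind-definable, in particular generically stable, partial types over $M_*$, and then the contrapositive of the preceding lemma yields compressibility of $q$ directly. If you want to salvage your approach, you need either a genuinely well-founded invariant (which the maximal generically stable type over a growing base is not) or to replace the one-type-at-a-time iteration by a saturation step of this kind in which all definable partial types over the current base are processed before moving on.
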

\begin{proof}
As $p$ is not stable, it has an extension which is not definable, so we may assume that $p$ is not definable. We build a chain of conservative extensions $p=p_0 \subseteq p_1 \subseteq \ldots$ of $p$ such that each $p_i$ is over a model $M_i$ of size $|M|+|T|$ and such that the following holds: for each $i$, for every definable partial type $(\phi(x;y),d\phi(y))$ defined over $M_i$ and consistent with $p_i$, either this partial type is not consistent with $p_{i+1}$, or it is consistent with every conservative extension of $p_{i+1}$. This can be done easily: given $p_i$ and $M_i$, list all definable partial types over $M_i$ as $(\pi_j(x):j<\kappa)$. Then build by induction a chain of models $M^ j$ containing $M_i$ and an increasing sequence $p^ j\in S(M^ j)$ of conservative extensions of $p_i$ as follows: set $M^0=M_i$, $p^0 = p_i$. At a successor stage $j+1$, if there is a conservative extension of $p^ j$ which is not consistent with $\pi_j(x)$, let $p^{j+1}\in S(M^{j+1})$ be such an extension, otherwise set $p^{j+1}=p^j$. At limit stages, take the union. At the end, set $M_{i+1}=\bigcup_{j<\kappa} M^j$ and $p_{i+1}=\bigcup_{j<\kappa} p^j$. This has the required properties.

Having done this, let $q = \bigcup_{i<\omega} p_i$. Then $q$ is a conservative extension of $p$. In particular, it is not realized. Let $\pi(x) = (\phi(x,y),d\phi(y))$ be a partial type finitely definable over $M_* := \bigcup M_i$ and consistent with $q$. Then $\pi$ is definable over $M_i$ for some $i$. As $\pi$ is consistent with $p_{i+1}$, by construction, this implies that it is consistent with all conservative extensions of $p_{i+1}$ and a fortiori with all conservative extensions of $q$. Therefore for any Ind-definable partial type $\pi(x)$, defined over $M_*$, if $\pi(x)$ is consistent with $q$, it is consistent with all conservative extensions of $q$. By the previous lemma, this implies that $q$ is compressible.
\end{proof}

\bigskip
\footnotesize
\noindent\textit{Acknowledgments.}
I would like to thank the referee for an extremely comprehensive and helpful report.

This research was partly supported by ValCoMo (ANR-13-BS01-0006) and by NSF (DMS-1665491).

\end{document}